\documentclass[11pt]{article} 
\usepackage{amsfonts}
\usepackage{amsmath}
\usepackage{amssymb}
\usepackage{tikz}
\usepackage{mathabx}
\usepackage{authblk}
\usepackage[margin=1in]{geometry}
\usepackage{amssymb}
\usepackage{amsthm}
\usepackage{dsfont}
\usepackage{graphicx}
\usepackage{enumerate}
\usepackage[hypcap]{caption}
\usepackage{color, soul}
\usepackage{float}
\usepackage[ruled]{algorithm2e}
\usepackage{prettyref}
\usepackage{enumitem}
\usepackage{comment}

\usepackage{mathrsfs}
\usepackage{subcaption}

\usepackage{needspace}

\newcommand{\OPT}{{\rm OPT}(G)}

\usepackage{amsmath,amssymb,color}

\newcommand{\vol}{{\rm vol}}

\newcommand{\eps}{\varepsilon}

\renewcommand{\AA}{ \mathcal{A} }

\newcommand{\cA}{\mathcal{A}}
\newcommand{\cB}{\mathcal{B}}
\newcommand{\CC}{\mathcal{C}}
\newcommand{\cC}{\mathcal{C}}
\newcommand{\cQ}{\mathcal{Q}}
\newcommand{\q}{q^*}


\newcommand{\hdelta}{\hat{\delta}}
\newcommand{\hh}{\hat{h}}
\newcommand{\underscore}{\texttt{\detokenize{_}}}
\newcommand{\maxin}{\max\!\underscore{\rm in}}
\newcommand{\maxout}{\max\!\underscore{\rm out}}

\newtheorem{theorem}{Theorem}[section]

\newtheorem{corollary}[theorem]{Corollary}

\newtheorem{lemma}[theorem]{Lemma}
\newtheorem{claim}[theorem]{Claim}
\newtheorem{proposition}[theorem]{Proposition}

\newtheorem{remark}[theorem]{Remark}

\newtheorem{open}[theorem]{Open Question}

\numberwithin{figure}{section}
\numberwithin{equation}{section}



\usepackage{rotating}

\newcommand{\NN}{{\mathbb N}}

\renewcommand{\eps}{\varepsilon}

\setlength{\parindent}{0pt}

\usepackage{algorithm2e}
\usepackage{xcolor}

\newcommand{\xvar}[1]{\textsf{#1}}

\newcommand{\xvbox}[2]{\makebox[#1][l]{#2}}



\SetAlCapSty{xAlCapSty}


\SetCommentSty{xCommentSty}


\SetNlSty{mynlfont}{}{} 

\LinesNumbered

\SetSideCommentRight

\DontPrintSemicolon

\RestyleAlgo{algoruled}

\title{Modularity and Graph Expansion}

\begin{document}

\author[1]{Baptiste Louf\thanks{Email: \textit{baptiste.louf@math.u-bordeaux.fr}. Part of this work was done while supported by the Knut and Alice Wallenberg Foundation.}}

\author[2]{Colin McDiarmid \thanks{Email: \textit{cmcd@ox.ac.uk}}}

\author[3]{Fiona Skerman\thanks{Email: \textit{fiona.skerman@math.uu.se}. Partially supported by the Wallenberg AI, Autonomous Systems and Software Program WASP and the project AI4Research at Uppsala University. Part of this work was done while visiting the Simons Institute for the Theory of Computing, supported by a Simons-Berkeley Research Fellowship.}}

\affil[1]{CNRS and Institut de Math\'ematiques de Bordeaux, France}
\affil[2]{Department of Statistics, University of Oxford, United Kingdom}
\affil[3]{Department of Mathematics, Uppsala University, Sweden}

\maketitle










\begin{abstract}
We relate two important notions in  graph theory: expanders which are highly connected graphs, and modularity a parameter of a graph that is primarily used in community detection. More precisely, we show that a graph having modularity bounded below 1 is equivalent to it having a large subgraph which is an expander. \\

We further show that a connected component $H$ will be split in an optimal partition of the host graph $G$ if and only if the relative size of $H$ in $G$ is greater than an expansion constant of $H$.
This is a further exploration of the resolution limit known for modularity, and indeed recovers the bound that a connected component $H$ in the host graph~$G$ will not be split if~$e(H)<\sqrt{2e(G)}$.
\end{abstract}


\section{Introduction and results}
\label{sec.intro}

In this paper, we relate the property of a graph having expander  induced subgraphs with the notion of its modularity.
(We recall the definitions of the modularity of a graph and of expander graphs in Section~\ref{subsec.defns} below.)  Modularity was introduced by Newman and Girvan~\cite{NewmanGirvan}, and it quantifies how well a graph can be divided into separate communities. More precisely, to each graph $G$, we associate a quantity $0 \leq \q(G) <1$, and the higher~$q^*(G)$ is the more `community structure' $G$ has. Many community detection algorithms are based on a modularity maximisation principle~\cite{louvain, cohen2020power, traag2019louvain}, with a wide range of applications, from protein detection to connections between web sites. See \cite{For10,POM09} for surveys on modularity.\\ 

On the other hand, expander graphs are important objects in graph theory and theoretical computer science. An expander graph has a wide variety of properties: for instance, the random walk mixes very fast~\cite{AKS87,Gil98}, and the eigenvalues of its Laplacian are well-separated~\cite{Al86, Che70}. In computer science, they are used for clustering with the expander decomposition technique, see for instance~\cite{KVV04}. We refer to~\cite{HLW06} for a survey of expander graphs and their applications.\\ 

The modularity value $q^*(G)$ of a graph $G$ is robust to small perturbations in the edge-set: changing an $\eps$ proportion of the edges in the graph changes the modularity value by at most~$2\eps$~\cite{ERmod}. In contrast, the property of being an expander, and the expansion constant, is influenced by even very small regions in the graph: for example adding a disjoint edge causes the expansion constant to drop to zero. 
However, the property of having large expander subgraphs is robust to changes in the edge-set, see also the discussion in~\cite{chakraborti2022well, krivelevich2019expanders}.  Here, by `large' we mean containing at least a constant fraction of the edges of the graph. It turns out this notion of containing large induced expander subgraphs will be the right expander property of graphs to consider when relating modularity and expansion. The question of the presence of large expanders in graphs has been considered in previous works~\cite{bottcher2010bandwidth,  Kri18, krivelevich2019expanders, LS20}.\\

Here is a quick guide to the results, which will be presented after the definitions.  Theorem~\ref{thm} shows that modularity bounded below 1 implies having a large expander subgraph and vice versa; with detailed upper and lower bounds given in Propositions~\ref{prop.exptoqub} and~\ref{prop.noexptoqlb}. 
Our other main result, Theorem~\ref{thm:reslimit} shows that whether a connected component~$H$ in host graph~$G$ is split (or kept as one part) in an optimal partition of $G$ is characterised by the ratio $e(H)/e(G)$ and an edge expansion constant $\hh_H$ of $H$ - see Section~\ref{subsec.defns}. This 
extends the `resolution limit' known to exist for modularity.\\

 The results are stated in Section~\ref{subsec.results} and proven in Section~\ref{sec.proofs}. In Section~\ref{sec.constructions} we give families of examples showing the tightness of Theorem~\ref{thm} and part of Proposition~\ref{prop.exptoqub}, the other part remains open, see Section~\ref{sec.concl}. 

\needspace{5\baselineskip}
\subsection{Definitions}\label{subsec.defns}

\subsubsection*{Graph definitions}

Given a graph $G$, for disjoint sets $A$ and $B$ of vertices, let $e_G(A)$ be the number of edges within~$A$, let $e_G(A,B)$ be the number of edges between $A$ and $B$, and let the \emph{volume} $\vol_G(A)$ be the sum over the vertices $v$ in $A$ of the degree $d_v$. We will sometimes drop the subscript if it is clear from context. We restrict our attention to graphs $G$ with at least one edge (that is, \emph{non-empty} graphs), usually without comment.\\

Our graphs may have multiple or weighted edges and loops. For such a graph $G$, $e_G(A)$ and $e_G(A,B)$ are the sums of the weights of the corresponding edges. Similarly the weighted degree $d_v$ of a vertex $v$ is the sum of the weights of the incident edges, with loops counting twice to the sum. The volume $\vol_G(S)$ of a set $S$ of vertices is the sum of the weighted degrees of the vertices in $S$.

\subsubsection*{Expansion definitions}

Let us now introduce the property of graph expansion and two ways of measuring it: relative to the volume 
of the smaller set or to a product of volumes. Given a graph~$G$ the \emph{conductance} or \emph{Cheeger constant} $h_G$ is defined as follows. Write $\bar{A}$ to denote $V(G)\backslash A$ and let
\begin{equation}\label{def.h}
h_G(A)=\frac{e_G(A, \bar{A})}{\min\{\vol_G(A), \vol_G(\bar{A})\}} \;\;  \mbox{for } \emptyset \neq A \; \subsetneq \;  V(G), \;\;\; \mbox{and} \;\;\; h_G=\min_{A} h_G(A)\ .
\end{equation}
%
We say that $G$ is a \emph{$\delta$-expander} for any $0<\delta \leq h_G$. Observe that $0 \leq h_G \leq 1$, $h_G=0$ iff $G$ is disconnected, and $h_G=1$ iff $G$ is $K_3$ or $K_{1,t}$ for some $t \geq 1$, see Remark~\ref{rem.hbounds}.\\

Now define the following variant $\hh_G$ of graph expansion, by replacing the minimum with a product and normalising. Let
\begin{equation}\label{def.hh}
\hh_G(A)=\frac{e_G(A, \bar{A})}{\vol_G(A) \vol_G(\bar{A})}\vol(G) \;\;\; \mbox{for } \emptyset \neq A \; \subsetneq \;  V, \;\;\;\; \mbox{and} \;\;\;\; \hh_G=\min_{ A } \hh_G(A). 
\end{equation}
We say $G$ is a \emph{$\hdelta$-expander-by-products} for any $0<\hdelta \leq \hh_G$. Observe that $0 \leq \hh_G \leq 2$, $\hh_G=0$ iff $G$ is disconnected, and $\hh_G=2$ iff $G$ is $K_2$, see Remark~\ref{rem.hbounds}. Further we always have 
 \begin{equation} \label{eqn.hGhhG}
\tfrac12 \, \hh_G \leq h_G \leq \hh_G  
 \end{equation}
(and $h_G < \hh_G$ if $G$ is connected).\\ 

This notion $\hh_G$ of expansion-by-products was used for example in the paper by Kannan, Lov{\'a}sz and Simonovits~\cite{kannan1995isoperimetric} as they found it a more natural definition to relate expansion and Markov chains. We will find that under the notion of expansion-by-products we have a tight upper bound for modularity in Proposition~\ref{prop.exptoqub}. Also the value $\hh_H$ of a connected component~$H$ in host graph~$G$, together with the relative size $e(H)/e(G)$, characterise whether or not the vertex set of the component is split in a modularity optimal partition of~$G$, see~Theorem~\ref{thm:reslimit}.

\subsubsection*{Modularity definitions}

For a graph $G$, we assign a modularity score $q_\cA(G)$ to each vertex partition (or `clustering')~$\cA$. The modularity $\q(G)$ of $G$, sometimes called the `maximum modularity' of $G$, is defined to be the maximum of these scores over all vertex partitions. \\

Modularity was introduced in Newman \& Girvan~\cite{NewmanGirvan}. Let $G$ be a graph with vertex set $V$ and with $m\geq 1$ edges. For a partition $\cA$ of $V$, 
the modularity score of $\cA$ on $G$ is 
\begin{equation*} q_\cA(G) = \frac{1}{m}\sum_{A \in \cA} e(A) - \frac{1}{4m^2}\sum_{A\in \cA} \vol (A)^2; \end{equation*}
and the (maximum) modularity of $G$ is $\q(G)=\max_\cA q_{\cA}(G)$, where the maximum is over all partitions~$\cA$ of~$V$. We will write modularity as the difference of two terms, the \emph{edge contribution} or \emph{coverage} $q^E_\cA(G)=\tfrac{1}{m}\sum_{A \in \cA} e(A)$, and the \emph{degree tax} $q^D_\cA(G)=\tfrac{1}{4m^2}\sum_{A \in \cA} \vol(A)^2$.  For a graph $G$ with no edges, by convention we set $q_\cA(G)=0$ for every vertex partition $\cA$ and thus $\q(G)=0$.


\needspace{5\baselineskip}
\subsection{Statement of Results}\label{subsec.results}

\subsubsection*{Modularity of $G$ and expansion of subgraphs $H$}

Recall that $0 \leq q^*(G) <1$, and the closer $q^*(G)$ is to $1$ the more community structure $G$ is considered to have.
Our main result, Theorem~\ref{thm}, is that having low modularity implies having a large expander induced subgraph, and vice versa.\\

We shall use the following function $f$, see Figure~\ref{fig.f}.  For $0 < \alpha \leq 1$ let $f(\alpha)$ be the maximum value of $\sum_i x_i^2$ where $0 \leq x_i \leq \alpha$ for each $i$ and $\sum_i x_i=1$; so 
\begin{equation} \label{eqn.f}
f(\alpha)=\alpha^2\lfloor1/\alpha\rfloor + (1-\alpha \lfloor1/\alpha \rfloor)^2 \;\; \mbox{ for }  0< \alpha \leq 1\,. 
\end{equation}

Notice that $f$ is continuous and increasing, 
$f(\alpha) = \alpha$ when $\alpha=1/k$ for some integer $k$, and always $\alpha - \alpha^2/4 \leq f(\alpha) \leq \alpha$, so for small $\alpha$ we have $f(\alpha)\sim \alpha$. 
\begin{figure*}
    \centering
    \hspace{-3mm}\begin{subfigure}[t]{0.5\textwidth}
        \centering
        \includegraphics[scale=0.7]{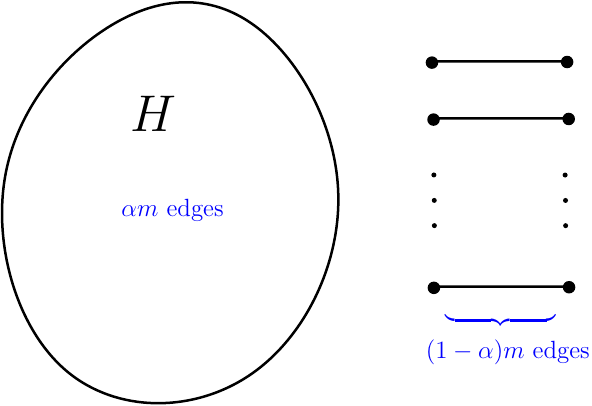}
        \caption{$G_H$ with $\q(G_H)\!\geq\! 1\!-\!\alpha^2\!-\!O(m^{-1}).$
        }
    \end{subfigure}%
    ~\hspace{3mm}~
    \begin{subfigure}[t]{0.5\textwidth}
        \centering
        \includegraphics[scale=0.7]{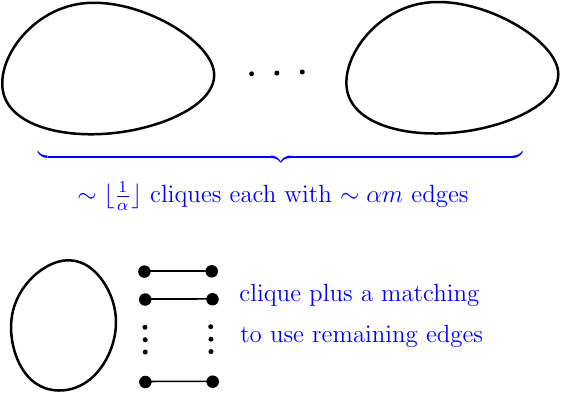}
        \caption{$G_\alpha$ with $\q(G_\alpha)\!=\!1-\!f(\alpha)\!+\!O(m^{-\frac{1}{2}}).$
        }
    \end{subfigure}
\caption{Example constructions for Theorem~\ref{thm}, see Section~\ref{sec.example_thm1_1} for details.}\label{fig.constructions}
\end{figure*}
\needspace{5\baselineskip}
\needspace{5\baselineskip}
\begin{theorem}\label{thm}
Let $0< \alpha <1$ and let $\eps>0$.
\begin{enumerate}[label=(\alph*)]
    \item\begin{enumerate}[label=(\roman*)]
        \item If the graph $G$ contains a subgraph $H$ with $e(H) \geq \alpha e(G)$ which is an $\alpha$-expander (that is $h_H \geq \alpha$) then $\q(G)\leq 1-\alpha^2$. 
        \item Conversely, for all sufficiently large $m \in \NN$, for \emph{each} 
        graph $H$ with $e(H) \leq \alpha m$ there exists an $m$-edge graph $G_H$ containing $H$ as an induced subgraph such that $\q(G_H) \geq 1-\alpha^2-\eps$. 
        \end{enumerate}
    \item\begin{enumerate}[label=(\roman*)]
        \item There exists $\delta>0$ such that every graph $G$ with $\q(G) < 1-f(\alpha)-\eps$ contains an induced subgraph $H$ with $e(H) \geq \alpha e(G)$ which is a $\delta$-expander (that is $h_H\geq \delta)$. 
        \item Conversely, for all sufficiently large $m \in \NN$ there is an $m$-edge graph $G_\alpha$ with $\q(G_\alpha)< 1-f(\alpha)+ \eps$ such that each subgraph of $G_\alpha$ with at least $\alpha\, m$ edges is disconnected.
\end{enumerate} 
\end{enumerate}
\end{theorem}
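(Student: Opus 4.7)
The theorem breaks into four parts; I would address them in the order (a)(i), (a)(ii), (b)(ii), (b)(i), expecting (b)(i) to demand the most work.

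\emph{Part (a)(i).} Given any partition $\cA = \{A_i\}$ of $V(G)$, restrict to $V(H)$ by setting $B_i = A_i \cap V(H)$ and $u_i = \vol_H(B_i)$. The hypothesis $h_H \geq \alpha$ gives $\sum_i e_H(B_i, V(H) \setminus B_i) \geq \alpha \sum_i \min(u_i, 2e(H) - u_i)$. If every $u_i \leq e(H)$, the right side equals $2\alpha e(H)$, so the fraction of $G$-edges cut is at least $\alpha e(H)/m \geq \alpha^2$, forcing $q_\cA(G) \leq 1 - \alpha^2$. Otherwise some $u_j > e(H)$; writing $y = u_j/(2e(H)) \in (1/2, 1]$, the expansion bound yields $q^E_\cA(G) \leq 1 - 2\alpha^2(1-y)$ while $\vol_G(A_j) \geq u_j$ forces $q^D_\cA(G) \geq y^2 \alpha^2$, and using $2 - 2y + y^2 = 1 + (1-y)^2 \geq 1$ gives $q_\cA(G) \leq 1 - \alpha^2$. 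For (a)(ii), let $G_H$ be the disjoint union of $H$ with $m - e(H)$ isolated edges: the partition placing $V(H)$ in one block and each extra edge in its own block has $q^E = 1$ and $q^D \leq (e(H)/m)^2 + (m - e(H))/m^2 \leq \alpha^2 + 1/m$, so $\q(G_H) \geq 1 - \alpha^2 - \eps$ for $m > 1/\eps$.

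\emph{Part (b)(ii).} Let $G_\alpha$ be a disjoint union of cliques, with $\lfloor 1/\alpha \rfloor$ of them of size just under $\alpha m$ edges and a smaller clique absorbing the remainder, arranged so every clique has strictly fewer than $\alpha m$ edges (rounding costs $O(m^{-1/2})$). Any subgraph with $\geq \alpha m$ edges must span at least two cliques and is therefore disconnected. The clique partition achieves $q = 1 - f(\alpha) + O(m^{-1/2})$, and Theorem~\ref{thm:reslimit} applied to each clique (using $\hh_{K_t} = t/(t-1) > \alpha$) shows that no clique is split at the optimum, so $\q(G_\alpha) \leq 1 - f(\alpha) + \eps$ for $m$ large.

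\emph{Part (b)(i).} By contrapositive: assume every induced subgraph $H$ with $e(H) \geq \alpha m$ has $h_H < \delta$; I build a partition $\cA$ with $q_\cA(G) \geq 1 - f(\alpha) - \eps$. Starting from $\{V(G)\}$, iterate: while some part $A$ has $e(G[A]) \geq \alpha m$, the non-expansion of $G[A]$ yields a cut $(S, A \setminus S)$ with $\vol_{G[A]}(S) \leq \vol_{G[A]}(A)/2$ and $e_G(S, A \setminus S) < \delta \vol_{G[A]}(S)$; split $A$ accordingly. When the process halts, every part $A_i$ has $e(G[A_i]) < \alpha m$, so with $\gamma = $ (total cut)$/m$ one has $x_i = \vol_G(A_i)/(2m) \leq \alpha + \gamma$; then by the definition of $f$ together with its local Lipschitz continuity the degree tax is at most $f(\alpha + \gamma) \leq f(\alpha) + L\gamma$, and combined with $q^E \geq 1 - \gamma$ this yields $q_\cA(G) \geq 1 - f(\alpha) - (1 + L)\gamma$.

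The main obstacle is bounding $\gamma$ by a constant depending only on $\alpha, \eps$ (independently of $|V(G)|$). I would charge each cut to the smaller-volume child $S_v$ and observe that for any fixed vertex $y$, each split in which $y$ lies on the smaller side halves the volume of $y$'s current part; since splits only occur while the part still has $\geq \alpha m$ edges (volume $\geq 2 \alpha m$), $y$ can be on the smaller side at most $\log_2(1/\alpha) + O(1)$ times, giving total cut $\leq 2 \delta m \log_2(1/\alpha)$. Taking $\delta = \Theta(\eps / \log(1/\alpha))$ then delivers $\gamma \leq \eps/(1 + L)$ and completes the proof; avoiding a $\log |V(G)|$ factor in this charging, by exploiting that we may stop splitting at the $\alpha m$-edge threshold rather than pushing down to singletons, is the essential technical point.
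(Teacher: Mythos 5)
Your proposal is correct, and parts (a)(ii) and (b)(ii) match the paper's constructions (Proposition~\ref{prop.example}) essentially verbatim: $G_H$ is $H$ plus $m-e(H)$ isolated edges, and $G_\alpha$ is a disjoint union of cliques each with fewer than $\alpha m$ edges (the paper additionally pads with disjoint edges so the edge count is exactly $m$; your phrase ``a smaller clique absorbing the remainder'' is mildly imprecise but the rounding error is indeed $O(m^{-1/2})$ as you say). Your appeal to Theorem~\ref{thm:reslimit} with $\hh_{K_t}=t/(t-1)>\alpha$ for establishing that the connected-component partition is optimal is sound and a legitimate alternative to the paper's invocation of Corollary~\ref{cor.0modpart}.

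Where you diverge meaningfully from the paper is in (a)(i) and (b)(i). For (a)(i) you argue directly with the conductance $h_H$, splitting into the case where no restricted part captures more than half of $\vol(H)$ (so the cut edges already force $q^E_\cA\leq 1-\alpha^2$) and the case where one part $B_j$ is large (so its degree tax compensates for the smaller cut, via $2-2y+y^2\geq 1$); the arithmetic checks out, including the crucial observation $\vol_G(A_j)\geq\vol_H(B_j)$. The paper instead routes through Proposition~\ref{prop.exptoqub}(i), which is stated with $\hh_H$ and yields the finer estimate \eqref{eq.detailed_upperbound}; since $\hh_H\geq h_H\geq\alpha$, (a)(i) follows. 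Your route is more elementary and self-contained, but it does not give you the expansion-by-products bound $q^*(G)\leq 1-\alpha\min\{\hh_H,\alpha\}$, which the paper also needs (for Theorem~\ref{thm:reslimit} and Corollary~\ref{cor.spectralub}).

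For (b)(i) you take a genuinely different route from the paper's proof of Proposition~\ref{prop.noexptoqlb}. The paper introduces the expansion-by-edges $h'_H$, establishes Lemma~\ref{lem.new1} (a single pass that halves $\maxin_G$ at cost $\rho\,e(G)$), and then iterates it $\lceil\log_2(1/\alpha)\rceil$ rounds in Lemma~\ref{lem.new2}, separately tracking $\maxin_G$ and $\maxout_G$; the geometric-series bound $\maxout_G(\cB)<2\rho\,e(G)$ keeps the degree-tax argument tight. Your argument is instead a single greedy pass with a vertex-charging potential, in the spirit of the paper's Proposition~\ref{prop.noexptoqlb2} (the Laso\'n--Sulkowska method), but with the key adaptation that you track the \emph{induced} volume $\vol_{G[A]}(A)$, so that the edge threshold $e(G[A])\geq\alpha m$ translates to $\vol_{G[A]}(A)\geq 2\alpha m$ and the halving-count bound $\log_2(1/\alpha)+O(1)$ comes out correctly. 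This is sound: each vertex is charged $\delta\deg_G(v)$ only when it lands on the smaller induced-volume side, so the total cut is $O(\delta m\log(1/\alpha))$. Your degree-tax bound is looser than the paper's because you control $e_G(A_i,\bar{A_i})$ by the total cut $2\gamma m$ rather than by the refined $\maxout$ bound, so you get $x_i<\alpha+\gamma$ with $\gamma=O(\delta\log(1/\alpha))$ versus the paper's $\alpha+\frac{3}{2}\delta$; both vanish as $\delta\to 0$, so this costs nothing for the qualitative statement (b)(i), and indeed your charging argument is somewhat shorter. The one cosmetic point worth tightening in a write-up is the Lipschitz constant for $f$: on each interval $(1/(k+1),1/k]$ we have $f'\in[0,2]$, so $L=2$ works uniformly, and for $\alpha+\gamma>1$ the trivial bound $q^D\leq 1=f(1)\leq f(\alpha)+2(1-\alpha)\leq f(\alpha)+2\gamma$ covers the overshoot.
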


Constructions for the two `converse' statements (a)(ii) and (b)(ii) in the theorem are illustrated in Figure~\ref{fig.constructions}, with full details and proofs of the bounds deferred to Section~\ref{sec.example_thm1_1}. 
Notice that the construction for (a)(ii) is particularly simple - given $H$ we may form $G_H$ by adding $e(G)-e(H)$ disjoint edges to $H$. Notice also that in (a)(ii) we may for example take $H$ to be an $\alpha$-expander. (by products). \\

The first statements (a)(i) and (b)(i) in Theorem~\ref{thm} follow immediately from the next two propositions, which give bounds on the dependence between the modularity value of $G$ and the relative size and expansion constant of subgraphs of $G$. 
Recall from Section~\ref{subsec.defns} that $h_{H}$ denotes the usual notion of conductance, while $\hh_H$ is a measure of edge expansion normalised by the product of the volumes of the two parts.

\needspace{5\baselineskip}
\begin{proposition} \label{prop.exptoqub}
    \begin{enumerate}[label=(\roman*)]
        \item 
        Let $0 < \alpha \leq 1$, let $G$ be a graph and let $H$ be a subgraph with relative size $e(H)/e(G) \geq \alpha$. Then
        \[ 
        q^*(G) \leq 1- \alpha \min \{\hat{h}_H,\alpha\}. 
        \]
        \item{Conversely, for all $\eps>0$, $0\leq a \leq 1$, $0 \leq \hdelta \leq 1$, there exists a graph $G$ with induced subgraph $H$ of relative size $\alpha=e(H)/e(G)$ such that $\,|\alpha-a|, |\hh_H-\hdelta| <\eps$ and 
        \[\q(G) \geq 1-\alpha \min\{ \hh_H , \alpha\}-\eps.\] }
    \end{enumerate}
\end{proposition}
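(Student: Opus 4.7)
The plan is to prove (i) by a direct partition argument and (ii) by an explicit cluster-graph construction.

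For (i), I will fix any partition $\cA$ of $G$ and look at the nonempty parts $\{B_1,\dots,B_r\}$ that $\cA$ induces on $V(H)$. If $r=1$, then no $H$-edge is cut by $\cA$ but the block $A^*$ containing $V(H)$ has $\vol_G(A^*)\geq\vol(H)=2\alpha m$, so $q^D_\cA\geq\alpha^2$ and $q_\cA\leq 1-\alpha^2\leq 1-\alpha\min\{\hh_H,\alpha\}$. If $r\geq 2$, set $\sigma:=\sum_i(\vol_H(B_i)/\vol(H))^2$; applying the defining inequality of $\hh_H$ to each pair $(B_i,\bar B_i)$ and summing over $i$ yields $2c_H\geq\hh_H\vol(H)(1-\sigma)$, i.e.\ $c_H\geq\hh_H\alpha m(1-\sigma)$, while the pointwise bound $\vol_G(A)^2\geq\vol_H(A\cap V(H))^2$ gives $q^D_\cA\geq\alpha^2\sigma$. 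Combining,
\[ q_\cA \;\leq\; 1 - \alpha\bigl[\hh_H(1-\sigma)+\alpha\sigma\bigr], \]
and the bracket, being a linear function of $\sigma\in[0,1]$, attains its minimum at an endpoint and is thus always at least $\min\{\hh_H,\alpha\}$.

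For part (ii), I will build a ``cluster graph'' $H$ on large integer parameters $s$ and $k$ as follows: take $k$ pairwise disjoint vertex sets $C_1,\dots,C_k$ of size $s$, place a clique $K_s$ on each, and add a $b$-regular expander multi-graph $J$ on the $k$ clusters (each super-edge of $J$ being realised as a single edge of $H$ joining the two corresponding cliques), with $b\approx\hdelta\,s(s-1)/(1-\hdelta)$. Every $C_i$ then has the same volume $v:=s(s-1)+b$; cluster-union cuts $S$ satisfy $\hh_H(S)\approx b/v\approx\hdelta$ (using that $J$ expands well on its $k$ vertices), while any cut that splits a clique incurs $\Omega(s^2)$ edges and so forces $\hh_H(S)=\Omega(1)\gg\hdelta$. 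Hence $\hh_H\approx\hdelta$. Let $G$ consist of $H$ together with a disjoint matching of new edges, sized so that $e(H)/e(G)$ is within $\eps$ of $a$; then $H$ is an induced subgraph of $G$ with the required $|\alpha-a|$ and $|\hh_H-\hdelta|$.

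To lower-bound $q^*(G)$, I compare two natural partitions of $G$ and take whichever gives a larger score. Partition $\cA_1$ puts all of $V(H)$ in one block and isolates each extra edge, giving $q_{\cA_1}=1-\alpha^2-o(1)$. Partition $\cA_2$ puts each $C_i$ in its own block (with the extras still isolated); the number of $H$-edges it cuts is $kb/2=\hdelta\,e(H)$, so $q^E=1-\hdelta\alpha$, the clusters contribute $kv^2/(4m^2)=\alpha^2/k$ to the degree tax, and the extras contribute $o(1)$, giving $q_{\cA_2}=1-\hdelta\alpha-\alpha^2/k-o(1)$. Sending $k$ (and $m$) to infinity and taking the maximum of the two scores yields $q^*(G)\geq 1-\alpha\min\{\hdelta,\alpha\}-\eps$. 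The main obstacle is the expansion verification $\hh_H\approx\hdelta$: one must rule out every non-cluster-union cut of $H$, which relies on $J$ being a sufficiently good expander on its $k$ vertices (so mixed and intra-cluster cuts cannot beat the cluster ones) and on $b$ being small compared to $s^2$ (so splitting a clique is genuinely penalised).
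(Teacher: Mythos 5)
Your proof of part (i) is correct and takes essentially the same route as the paper: pass to the induced partition on $V(H)$, use the defining inequality for $\hh_H$ to bound the cut $H$-edges below by $\hh_H\alpha m(1-\sigma)$ (where $\sigma=\sum_B x_B^2$), use $\vol_G(A)\geq\vol_H(A\cap V(H))$ to bound the degree tax below by $\alpha^2\sigma$, and conclude via linearity in $\sigma$ over $[0,1]$.

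For part (ii) your construction is genuinely different and, as written, the argument has a real gap. The paper takes $H$ to be a $k$-clique with $\ell$ pendant leaves at every vertex (equivalently a weighted $K_k$ with loops), for which \emph{every} nontrivial cut gives exactly the same value of $\hh$ by symmetry (see \eqref{eq.allsetsminimisehh}); the passage from the simple graph back to this weighted model is controlled by Lemma~\ref{lem.hdeltabothconn} and a pendant-consistency argument, so no case analysis over cuts is needed. Your cluster graph (disjoint $K_s$'s joined by a $b$-regular expander $J$) is a sensible alternative, and your modularity lower bound from the two test partitions is correct. But the expansion claim $\hh_H\approx\hdelta$ is, as you yourself flag, the crux, and it is not proved. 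Worse, two of the heuristics you lean on are wrong in the relevant parameter range: a cut that peels a single vertex off one $K_s$ loses only $\Theta(s)$ edges, not $\Omega(s^2)$; and with $b\approx\hdelta s(s-1)/(1-\hdelta)$ you have $b=\Theta(s^2)$ or larger once $\hdelta\geq\tfrac12$, so ``$b$ small compared to $s^2$'' fails exactly where you need the clique penalty most. A correct verification has to handle mixed cuts (several whole clusters plus a fragment of another), which requires tracking both the intra-clique boundary and how the $J$-endpoints are distributed within each cluster, and it also needs $\hh_J$ to be close to $1$ (not merely $J$ ``an expander'') so that cluster-union cuts do not undershoot $\hdelta$. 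These computations can probably be pushed through, but they are precisely the case analysis the paper's fully symmetric $H$ is designed to avoid, and your proposal leaves them undone.
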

In Proposition~\ref{prop.exptoqub}, the converse statement (ii) shows that the inequality in (i) is tight. Since $h_H \leq \hh_H$ by (\ref{eqn.hGhhG}), from (i) we have
\[ \q(G) \leq 1 - \alpha \min\{\hh_H, \alpha\} \leq 1- \alpha \min\{h_H, \alpha\};\]
and in the special case $\alpha=1$ we obtain
$\q(G) \leq 1 - \min\{\hh_G, 1\} \leq 1-h_G$ 
see Section~\ref{sec.concl} for a discussion. 
%
\begin{figure}
    \centering
    \includegraphics[scale=0.3]{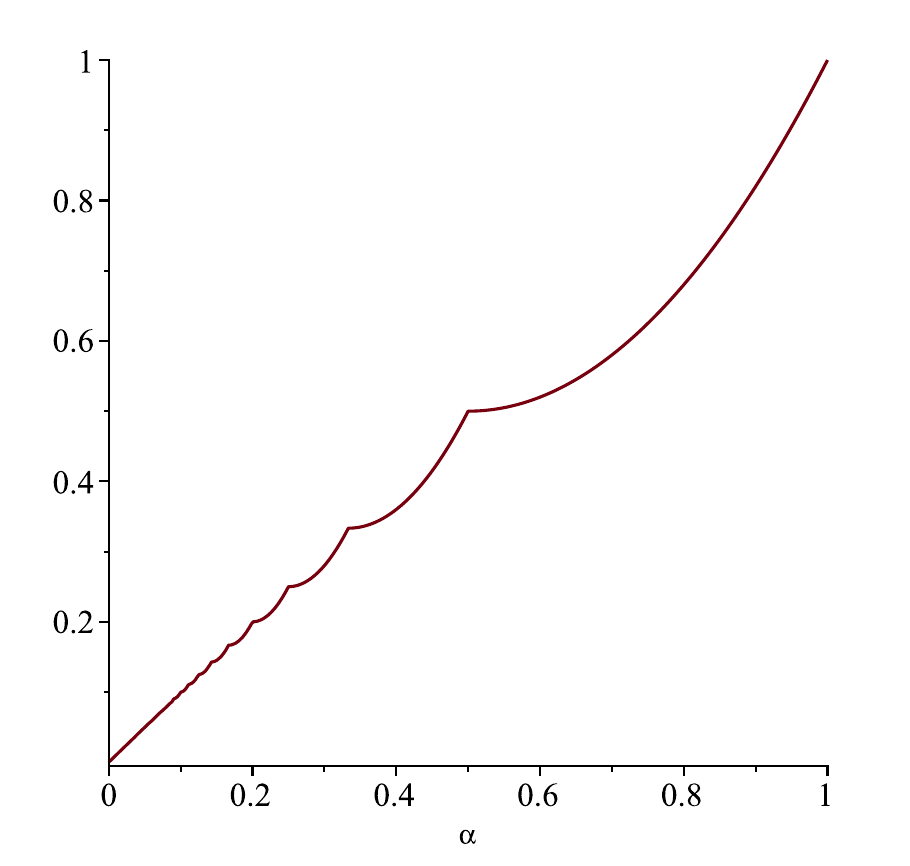}
    \caption{The function $f$, see \eqref{eqn.f}.}\label{fig.f}
\end{figure}
\needspace{5\baselineskip}
\begin{proposition} \label{prop.noexptoqlb}
Let $0 < \alpha, \delta \leq 1$, let $G$ be a graph, and suppose that each induced subgraph~$H$ 
with relative size $e(H) / e(G) \geq \alpha$ has $h_H < \delta$. Then 
\[ \q(G) > 1-f\left(\min\{1,\alpha+\tfrac 3 2 \delta\}\right) -  \tfrac{3}{2}\, \delta \lceil \log_2(1/\alpha)\rceil.\]
\end{proposition}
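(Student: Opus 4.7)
The plan is to construct a witness partition $\cA$ achieving high modularity by iteratively splitting parts along sparse cuts, and then bound its edge contribution and degree tax separately. Setting $m=e(G)$, I would start with $\cA_0=\{V(G)\}$ and, while some part $A\in\cA$ has $e_G(A)\geq\alpha m$, apply the hypothesis to $H=G[A]$ to obtain $h_H<\delta$ and hence a nontrivial $B\subsetneq A$ with $e_H(B,A\setminus B)<\delta\min\{\vol_H(B),\vol_H(A\setminus B)\}$; then replace $A$ by $B$ and $A\setminus B$. Let $\cA$ denote the final partition.

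For the edge contribution, I would write the loss $1-q^E_{\cA}(G)$ as the fraction of edges cut across splits, and control it via a charging argument: assign each split's cut to the smaller-induced-volume side at rate $\delta$ per unit of volume. A vertex $v$ charged on a smaller-side split has its current part's induced volume at least halved; refinement ends for $v$'s part once that induced volume (which equals $2e_G(A)$) falls below $2\alpha m$, so $v$ is on the smaller side of roughly at most $\lceil\log_2(1/\alpha)\rceil$ splits. Summing these charges should yield $1-q^E_{\cA}(G)\leq \tfrac{3}{2}\,\delta\lceil\log_2(1/\alpha)\rceil$, with the constant $\tfrac{3}{2}$ (rather than $2$) coming from the slack at the last split, which strictly crosses the threshold $2\alpha m$.

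For the degree tax, the goal is to show that every final part $A\in\cA$ satisfies $\vol_G(A)/(2m)\leq \alpha+\tfrac{3}{2}\delta$. The stopping rule gives $2e_G(A)<2\alpha m$, and a telescoping argument along the ancestors of $A$ in the refinement tree would bound $e_G(A,\bar A)\leq 3\delta m$: each ancestor's contribution to $e_G(A,\bar A)$ is at most the cut made at that ancestor, which is in turn at most $\delta$ times the smaller-side induced volume there, and these volumes form a geometric-like sequence. Setting $\beta=\min\{1,\alpha+\tfrac{3}{2}\delta\}$, the extremal characterisation \eqref{eqn.f} of $f$ then gives $q^D_{\cA}(G)=\sum_A(\vol_G(A)/(2m))^2\leq f(\beta)$. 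Combining the two bounds yields $q^*(G)\geq q_{\cA}(G)>1-\tfrac{3}{2}\delta\lceil\log_2(1/\alpha)\rceil-f(\beta)$, which is the claimed inequality.

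The main obstacle is the per-part volume bound in the degree-tax step. Since $A$'s lineage need not always lie on the smaller side of the relevant cuts, the naive geometric-series bound from the edge-contribution analysis does not transfer directly to bounding $e_G(A,\bar A)$ part-by-part; extracting the constant $\tfrac{3}{2}$ will require carefully interleaving smaller-side telescoping with the observation that at a larger-side split for $A$'s lineage, the sibling (the source of the external edges) has small induced volume. The edge-contribution charging is by contrast a fairly standard expander-decomposition computation once the refinement framework is in place.
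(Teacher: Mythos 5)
Your high-level plan---refine iteratively by sparse cuts until every part has fewer than $\alpha\, e(G)$ internal edges, then bound edge contribution and degree tax separately---is the same as the paper's. But the paper does not refine one split at a time with per-vertex charging: it proceeds level by level via two lemmas. Lemma~\ref{lem.new1} runs a \emph{peeling} loop on a part (always remove the smaller piece, in the \emph{expansion-by-edges} sense, from the remainder, while the remainder still has more than $\max\{\tfrac12 e(G),e_0\}$ internal edges), and Lemma~\ref{lem.new2} applies this to each part of a partition, tracking $\partial_G$, $\maxin_G$ and $\maxout_G$ simultaneously. Then the proposition follows by iterating Lemma~\ref{lem.new2} exactly $k=\lceil\log_2(1/\alpha)\rceil$ times, because $\maxin_G$ halves each round.

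The $\tfrac32$ constant is not obtainable by the charging you describe, and this is a real gap, not a loose end. Per-vertex charging at rate $\delta$ on the local degree, where a vertex $v$ can be on the smaller side at most $\lceil\log_2(1/\alpha)\rceil$ times, gives $|X| < \delta\,\lceil\log_2(1/\alpha)\rceil\cdot\vol(G)=2\delta\lceil\log_2(1/\alpha)\rceil\,e(G)$, i.e.\ the factor $2$, never $\tfrac32$. This is exactly the bound the paper derives in Proposition~\ref{prop.noexptoqlb2} (the volume-parameterised analogue), which uses this same per-vertex weight charging and indeed only achieves $2\delta\lceil\log_2(1/\beta)\rceil$. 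The improvement to $\rho=\tfrac{\delta}{1+\delta}\tfrac{3+\delta}{2}<\tfrac32\delta$ per level comes from a structural feature of the peeling: after the while-loop terminates, the residual piece $A_{k+1}$ satisfies $e_G(A_{k+1})>\tfrac{1}{4+2\delta'}e(G)$, i.e.\ roughly a quarter of the edges sit inside it and are never charged. That fact plugs into $(1+\delta')\partial_G(\AA)\leq\delta'(e(G)-e_G(A_{k+1}))$ and produces the $\tfrac34$ saving. Your ``slack at the last split'' is not the same thing: in the worst case the final smaller-side halving barely crosses the threshold and produces no saving, and your binary-tree refinement has no analogue of the residual quarter. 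So the constant in your edge-contribution bound is wrong, and fixing it essentially forces you to recreate Lemma~\ref{lem.new1}.

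For the degree tax you have correctly located the hard point, and your sketch of ``interleave smaller-side telescoping with small-sibling volumes'' is pointing in the right direction, but it does not close. At a larger-side step $A_i\to A_{i+1}$ you do get a cut bound $c_i<\delta\,e_G(A_i)$, but $e_G(A_i)$ does not decrease across a run of larger-side steps, and the depth of a lineage in your binary tree is not bounded by $\lceil\log_2(1/\alpha)\rceil$---only the number of smaller-side steps is---so the sum $\sum_i c_i$ need not be a geometric series. The paper avoids this by the level structure: in one application of Lemma~\ref{lem.new2}, \emph{every} resulting part has at most half the internal edges of its parent, so $\maxin_G$ halves uniformly per level, and $\maxout_G(\cB)\leq\maxout_G(\cA)+\rho\,\maxin_G(\cA)$ then yields $\maxout_G(\cA^k)<2\rho\,e(G)$ by a genuine geometric sum. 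Combining $\maxin_G(\cA^k)\leq\alpha e(G)$ and $\maxout_G(\cA^k)<2\rho e(G)$ gives the uniform per-part bound $\vol_G(A)<2(\alpha+\rho)e(G)$ needed to bound the degree tax by $f(\min\{1,\alpha+\rho\})$. Your one-split-at-a-time refinement needs an analogous \emph{uniform} halving guarantee, which it lacks; without it the per-part $\vol_G(A)$ bound does not follow, and this is the second gap.
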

To prove the first statements (a)(i) and (b)(i) in Theorem~\ref{thm} from these two propositions, one just needs to assume $h_H \geq \alpha$ (since then also $\hh_H \geq \alpha$) in Proposition~\ref{prop.exptoqub}(i); and $\delta$ small enough in Proposition~\ref{prop.noexptoqlb}, and use the fact that $f$ is continuous.


\subsubsection*{When should we split a component?}
In an optimal partition of any graph $G$ without isolated vertices, for each part $A$ the induced subgraph $G[A]$ is connected~\cite{nphard}. (If not, say $e(A', A\backslash A')=0$ for some $\emptyset \neq A' \subset A$, then replacing part $A$ by parts $A'$ and $A \backslash A'$ strictly increases the modularity score: the edge contribution stays the same while the degree tax is strictly decreased.) This implies that each connected component $H$ is partitioned separately from the rest of the host graph.\\ 

We find that whether an optimal partition of $G$ splits $H$ into multiple parts or keeps the vertices of $H$ together in one part is characterised by $\alpha$ and $\hh_H$. Let $\OPT$ be the set of partitions $\cA$ for $G$ such that $q_\cA(G)=\q(G)$. 
\needspace{5\baselineskip}
\begin{theorem}\label{thm:reslimit}
Let $H$ be a connected component in a graph $G$ and let $\alpha$ be the relative size~$e(H)/e(G)$.
\begin{itemize}
    \item If $\alpha < \hh_H$ then $\forall \cA \in \OPT$, $H$ is not split. 
    \item If $\alpha > \hh_H$ then $\forall \cA \in \OPT$, $H$ is split. 
    \item If $\alpha = \hh_H$ then $\exists \,\cA, \cA' \in \OPT$ such that $H$ is not split in $\cA$ and $H$ is split in $\cA'$.
\end{itemize}

\end{theorem}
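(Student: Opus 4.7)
The plan is to reduce to analysing how best to partition $V(H)$ alone (with the rest of $G$ fixed), and then to show that whether keeping $V(H)$ as a single part or splitting it is optimal is governed exactly by the sign of $\alpha-\hh_H$. Since $H$ is a connected component, $\vol_G(A)=\vol_H(A)$ and $e_G(A)=e_H(A)$ for $A\subseteq V(H)$, and by the observation recalled in the excerpt every part of any optimal partition of $G$ lies inside one component. Hence, after fixing any partition of $V(G)\setminus V(H)$, the contribution to $q_\cA(G)$ from parts inside $V(H)$ decouples and depends only on the partition of $V(H)$, and an optimal partition of $G$ must use an optimal partition of $V(H)$ for this stand-alone sub-problem.

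For a bipartition $(B,\bar B)$ of $V(H)$, using $e_H(B,\bar B)=\hh_H(B)\vol_H(B)\vol_H(\bar B)/(2e(H))$, a direct expansion shows the gain over the trivial partition of $V(H)$ is
\[
\Delta_{2}(B) \;=\; \frac{\vol_H(B)\vol_H(\bar B)}{2m^{2}}\Bigl(1-\frac{\hh_H(B)}{\alpha}\Bigr),
\]
so $\Delta_{2}(B)>0$ iff $\hh_H(B)<\alpha$. If $\alpha>\hh_H$, choose $B^{*}$ achieving $\hh_H(B^{*})=\hh_H$; then $\Delta_{2}(B^{*})>0$, the trivial partition is not optimal on $V(H)$, and so $H$ is split in every $\cA\in\OPT$. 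If $\alpha=\hh_H$, the same $B^{*}$ gives $\Delta_{2}(B^{*})=0$, so (once the next step is done) both $\{V(H)\}$ and $\{B^{*},\bar{B^{*}}\}$ are optimal partitions of $V(H)$; appending any optimal partition of $V(G)\setminus V(H)$ to each produces the two partitions $\cA,\cA'\in\OPT$ required.

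It remains to rule out $k\geq 3$ partitions beating the trivial one whenever $\alpha\leq\hh_H$. For a general partition $B_{1},\dots,B_{k}$ of $V(H)$, write $C=\sum_{i<j}e_H(B_i,B_j)$ and $S=\sum_{i}\vol_H(B_{i})^{2}$; expanding modularity gives $\Delta=-C/m+((2e(H))^{2}-S)/(4m^{2})$. Summing the defining inequality $e_H(B_{i},V(H)\setminus B_{i})\geq \hh_H\,\vol_H(B_{i})(2e(H)-\vol_H(B_{i}))/(2e(H))$ over $i$ and dividing by $2$ (each cut edge appears twice) yields $C\geq \hh_H((2e(H))^{2}-S)/(4e(H))$, which substitutes into the formula for $\Delta$ to give
\[
\Delta \;\leq\; \frac{(2e(H))^{2}-S}{4m^{2}}\Bigl(1-\frac{\hh_H}{\alpha}\Bigr).
\]
Since $(2e(H))^{2}-S>0$ whenever $k\geq 2$, this is strictly negative when $\alpha<\hh_H$ (giving the first bullet) and non-positive when $\alpha=\hh_H$ (confirming $\{V(H)\}$ is indeed optimal in that case, which was the outstanding point in paragraph two). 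The one step requiring care is this Cheeger-style summation, which is what promotes the exact bipartition formula to a bound valid for every non-trivial partition.
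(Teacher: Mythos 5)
Your proof is correct and follows essentially the same route as the paper's. You both reduce to comparing the partial modularity score of partitions of $V(H)$ against keeping $V(H)$ whole, apply the defining inequality $e_H(B_i,V(H)\setminus B_i)\geq \hh_H\,\vol_H(B_i)\vol_H(V(H)\setminus B_i)/\vol(H)$ part-by-part, and sum; your bound $\Delta \leq \frac{(2e(H))^2-S}{4m^2}\bigl(1-\hh_H/\alpha\bigr)$ is, after substituting $S=(2e(H))^2\sum_B x_B^2$ and $\alpha=e(H)/m$, exactly the paper's inequality $q_{\cB_{\rm split}}-q_{\cB_{\rm conn}}\leq -\alpha(\hh_H-\alpha)(1-\sum_B x_B^2)$, and the exact witness-bipartition computation matches the paper's Claim 2. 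The only cosmetic difference is that you track $\Delta$, $C$, $S$ directly while the paper normalises via $x_B=\vol_H(B)/\vol(H)$.
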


Note that $\hh_H$ is a minimum of a function over cuts or bipartitions of $V(H)$, rather than over all partitions of $V(H)$ including those containing more than two parts. Hence it is interesting that $\alpha$ and $\hh_H$ are enough to determine when a connected component is split in an optimal partition of the host graph.\\  

Recall the original resolution limit bound, in~\cite{FortBart2008}, that any modularity optimal partition will not split a connected component $H$ in graph $G$ if $e(H) < \sqrt{2 e(G)}$. This is implied by Theorem~\ref{thm:reslimit}. To see this, note first that $e(U, V(H) \backslash U) \geq 1$ for all $\emptyset \neq U  \subsetneq  V(H)$ (since~$H$ is connected). Also $\max_U \vol(U)(\vol(H)-\vol(U)) \leq \vol(H)^2/4 = \vol(H) e(H)/2$. Hence 
$\hh_H \geq 2/e(H)$. By the theorem, $H$ will not be split if $e(H)/e(G) < 2/e(H)$ (since then $e(H)/e(G) < \hh_H$). Rearranging, we see that $H$ will not be split if $e(H) < \sqrt{2 e(G)}$, which recovers the resolution limit bound.\\

Theorem~\ref{thm:reslimit} also shows that connected components with modularity zero will not be split in any optimal partition of~$G$.
\begin{corollary}\label{cor.0modpart}
Let the graph $G$ have no isolated vertices, and let $H$ be a connected component of $G$ with $e(H) < e(G)$. Let $\cA$ be an optimal partition for $G$, that is, $q_{\cA}(G)=\q(G)$. If $\q(H) =0$ then $V(H)$ is a part in $\cA$.
\end{corollary}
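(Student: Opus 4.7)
The plan is to reduce the corollary to Theorem~\ref{thm:reslimit} by showing that $\q(H)=0$ forces $\hh_H \geq 1$. Once we have this, since $e(H) < e(G)$ gives $\alpha := e(H)/e(G) < 1 \leq \hh_H$, Theorem~\ref{thm:reslimit} says $V(H)$ is not split in any optimal partition; combined with the observation made just before Theorem~\ref{thm:reslimit} (in any optimal partition each connected component is partitioned separately from the rest), this will give that $V(H)$ is a single part in $\cA$, as desired.

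The key step is the identity, for any bipartition $\{U, \bar U\}$ of $V(H)$ with $\bar U = V(H) \setminus U$:
\[ q_{\{U,\bar U\}}(H) \;=\; \frac{\vol(U)\vol(\bar U)}{2\,e(H)^2}\bigl(1-\hh_H(U)\bigr). \]
I would derive this by starting from the definition of $q_{\{U,\bar U\}}(H)$, substituting $e(U)+e(\bar U)=e(H)-e(U,\bar U)$ and $\vol(U)^2+\vol(\bar U)^2=\vol(H)^2-2\vol(U)\vol(\bar U)$, then using $\vol(H)=2e(H)$ together with the definition of $\hh_H(U)$. Simple rearrangement yields the displayed formula.

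Since $H$ is a connected component of $G$ and $G$ has no isolated vertices, $H$ is connected with at least one edge, so every vertex of $H$ has positive degree and $\vol(U)\vol(\bar U) > 0$ for every non-trivial bipartition $\{U,\bar U\}$. The hypothesis $\q(H) = 0$ forces $q_{\{U,\bar U\}}(H) \leq 0$, and the identity then gives $\hh_H(U) \geq 1$ for every such $U$; taking the minimum yields $\hh_H \geq 1$. With $\alpha<1\leq \hh_H$, Theorem~\ref{thm:reslimit} delivers the conclusion.

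The only mild subtlety is the boundary case $\alpha=\hh_H$, where Theorem~\ref{thm:reslimit} allows both behaviours and the conclusion could fail; this is precisely what the hypothesis $e(H)<e(G)$ rules out by ensuring $\alpha<1$. Everything else is routine algebra, and I expect no serious obstacle beyond verifying the identity above.
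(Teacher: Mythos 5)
Your proof is correct and follows essentially the same route as the paper: the paper derives $\q(H)=0\Rightarrow \hh_H\geq 1$ by citing Proposition~\ref{prop.q0}, then invokes Theorem~\ref{thm:reslimit}, while you re-derive that implication inline via the identity $q_{\{U,\bar U\}}(H)=\tfrac{\vol(U)\vol(\bar U)}{2e(H)^2}(1-\hh_H(U))$, which is exactly the bipartition calculation hidden inside the paper's proof of Proposition~\ref{prop.q0}. The identity checks out, and your handling of the boundary case (needing strict inequality $\alpha<\hh_H$, secured by $e(H)<e(G)$ together with $\hh_H\geq 1$) is correct.
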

To see why Corollary~\ref{cor.0modpart} follows from Theorem~\ref{thm:reslimit} (and Proposition~\ref{prop.q0}), note by Proposition~\ref{prop.q0} that $\q(H)=0$ implies $\hh_H \geq 1$. Now since $e(H)<e(G)$ we have that $\alpha=e(H)/e(G)<\hh_H$ and thus by Theorem~\ref{thm:reslimit} the component $H$ is not split in any optimal partition of $G$.\\

This corollary tells us, for example, that each clique in Figure~\ref{fig.constructions} forms its own part in any optimal partition.


\subsubsection*{Modularity near one}
The next result says roughly that $\q(G) \approx 1$ if and only if each `large' subgraph $H$ of $G$ has $h_H \approx 0$.
\needspace{5\baselineskip}
\begin{corollary}\label{cor.q0}$\mbox{ }$
    \begin{enumerate}[label=(\alph*)] 
        \item	For all $\alpha, \delta>0$ there exists $\eps>0$ such that if the 
            graph $G$ has a subgraph $H$ with at least $\alpha\, e(G)$ edges which is an $\delta$-expander then $q^*(G) < 1-\eps$ (indeed we may take $\eps = \alpha \min\{\alpha, \delta\}$). 
        \item	For all $\eps>0$ there exists $\alpha>0$ such that if the graph $G$ has no subgraph $H$ with at least $\alpha\, e(G)$ edges which is an $\alpha$-expander then $q^*(G) > 1-\eps$.
    \end{enumerate}
\end{corollary}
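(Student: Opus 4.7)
The plan is to derive Corollary~\ref{cor.q0} essentially as a direct translation of Propositions~\ref{prop.exptoqub} and~\ref{prop.noexptoqlb}; both parts are short corollaries that require little beyond bookkeeping.

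For part~(a), given a subgraph $H$ of $G$ with $e(H)\ge\alpha\,e(G)$ and $h_H\ge\delta$, I would apply Proposition~\ref{prop.exptoqub}(i) to obtain
\[
\q(G)\le 1-\alpha\min\{\hh_H,\alpha\}.
\]
The inequality $\hh_H\ge h_H\ge\delta$ from~\eqref{eqn.hGhhG} gives $\min\{\hh_H,\alpha\}\ge\min\{\delta,\alpha\}$, so $\q(G)\le 1-\alpha\min\{\alpha,\delta\}$, and the claim holds with $\eps=\alpha\min\{\alpha,\delta\}>0$.

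For part~(b), I would note that the corollary's hypothesis, quantified over \emph{all} subgraphs, implies the hypothesis of Proposition~\ref{prop.noexptoqlb}, which is only quantified over \emph{induced} subgraphs: if no subgraph $H$ with $e(H)\ge\alpha\,e(G)$ is an $\alpha$-expander, then in particular no such induced subgraph is. Applying Proposition~\ref{prop.noexptoqlb} with $\delta=\alpha$ then yields
\[
\q(G)>1-f\bigl(\min\{1,\tfrac{5}{2}\alpha\}\bigr)-\tfrac{3}{2}\,\alpha\,\lceil\log_2(1/\alpha)\rceil.
\]
Since $f(x)\le x$ for $x\in(0,1]$ and $\alpha\log_2(1/\alpha)\to 0$ as $\alpha\to 0^+$, both correction terms tend to $0$ with $\alpha$, so for any prescribed $\eps>0$ I may choose $\alpha>0$ sufficiently small that the right-hand side exceeds $1-\eps$, as required.

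There is no real obstacle here: both parts are reformulations of the two propositions. The only point worth flagging explicitly is the direction of implication in part~(b)---that a quantifier over all subgraphs is at least as strong as one over induced subgraphs---which is immediate but must be stated, since Proposition~\ref{prop.noexptoqlb} is phrased in terms of induced subgraphs while the corollary is phrased in terms of subgraphs.
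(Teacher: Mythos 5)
Your proposal is correct and follows exactly the route the paper intends: Corollary~\ref{cor.q0} is deduced directly from Proposition~\ref{prop.exptoqub}(i) (using $\hh_H\ge h_H$ from~\eqref{eqn.hGhhG}) and from Proposition~\ref{prop.noexptoqlb} with $\delta=\alpha$, letting $\alpha\to 0^+$. The only cosmetic imprecision is that Proposition~\ref{prop.exptoqub}(i) yields the non-strict bound $\q(G)\le 1-\alpha\min\{\alpha,\delta\}$, so the parenthetical choice $\eps=\alpha\min\{\alpha,\delta\}$ strictly speaking only gives $\q(G)\le 1-\eps$; the existential claim itself is unaffected.
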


This last result shows that, given a sequence $G_m$ of $m$-edge graphs, as $m \to \infty$ we have $q^*(G_m) \to 1$ if and only if
$\max_{H \subseteq G_m} \min \{ e(H)/m, h_H\} \to 0$. (This statement remains true if instead of maximising over all subgraphs $H$ of $G$ we maximise over all topological minors~$H$ of $G$, see \cite{LS20}.) Thus we have a characterisation of `maximally modular' graphs $G_m$ as defined in~\cite{modgraphclasses}. 


\subsubsection*{Modularity zero}
Now consider the other extreme, when $\q(G)=0$.  It is known that this holds for complete graphs~\cite{nphard} and complete multipartite graphs \cite{bolla2015spectral, majstorovic2014note} and for all graphs constructed by taking a complete graph on $n$ vertices and deleting at most $n/2$ edges~\cite{vdense}. 
We can characterise when $\q(G)=0$ in terms of a form of graph expansion.  This can be expressed in a way now involving not the minimum of $e(A)$ and $e(\bar{A})$ but their geometric mean, or involving a product of $\vol(A)$ and $\vol(\bar{A})$. Here $\bar{A}$ denotes~$V(G) \setminus A$.
\needspace{5\baselineskip}
\begin{proposition} \label{prop.q0}
    For a 
    graph $G$, the following three conditions are equivalent:
    \begin{enumerate}[label=(\alph*)]
        \item $\; \q(G)=0$
        \item $\; e(A,\bar{A}) \geq 2 \sqrt{e(A) e(\bar{A})}\;$ for all $A \subseteq V(G)$
        \item $\; e(A,\bar{A}) \geq \vol(A) \vol(\bar{A})/ \vol(G)\;$ for all $A \subseteq V(G)$\,, i.e.\ $\hh_G \geq 1$.
    \end{enumerate}
\end{proposition}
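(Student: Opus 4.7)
The plan is to introduce the parameters $x=e(A)$, $y=e(\bar A)$, $z=e(A,\bar A)$ and $m=e(G)=x+y+z$, so that $\vol(A)=2x+z$, $\vol(\bar A)=2y+z$ and $\vol(G)=2m$. I will then prove the chain (b)\,$\Leftrightarrow$\,(c)\,$\Leftrightarrow$\,(a).

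For (b)\,$\Leftrightarrow$\,(c), I note this is a per-$A$ algebraic equivalence. Expanding (c) gives $2(x+y+z)z\geq(2x+z)(2y+z)$; the terms $2xz$ and $2yz$ cancel on both sides, leaving $z^{2}\geq 4xy$, which is exactly (b) squared (both sides non-negative). The cases $A\in\{\emptyset,V(G)\}$ hold trivially with both sides zero.

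For (a)\,$\Rightarrow$\,(b), I use that the trivial partition $\{V(G)\}$ has modularity score $0$, so $\q(G)\geq 0$ always and hence (a) is equivalent to $q_\cA(G)\leq 0$ for every partition $\cA$. Applying this to the bipartition $\{A,\bar A\}$ and writing $q^{E}_{\{A,\bar A\}}-q^{D}_{\{A,\bar A\}}$ via the formulas above, a direct simplification reduces $q_{\{A,\bar A\}}(G)\leq 0$ to $z^{2}\geq 4xy$, giving (b). The nontrivial direction is (c)\,$\Rightarrow$\,(a). Let $\cA=\{A_{1},\ldots,A_{k}\}$ be any partition of $V(G)$ and write $d_{i}=\vol(A_{i})$ and $e_{i}=e(A_{i})$. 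Applying (c) to each $A_{i}$ yields
\[
e(A_{i},\overline{A_{i}})\;\geq\;\frac{d_{i}(2m-d_{i})}{2m}\;=\;d_{i}-\frac{d_{i}^{2}}{2m}.
\]
Summing over $i$, and using the identities $\sum_{i}e(A_{i},\overline{A_{i}})=2(m-\sum_{i}e_{i})$ (each cross edge is counted in exactly two of the terms) and $\sum_{i}d_{i}=2m$, I obtain
\[
2m-2\sum_{i}e_{i}\;\geq\;2m-\frac{1}{2m}\sum_{i}d_{i}^{2},
\]
which rearranges to $\sum_{i}d_{i}^{2}\geq 4m\sum_{i}e_{i}$, i.e.\ $q^{D}_{\cA}(G)\geq q^{E}_{\cA}(G)$ and so $q_{\cA}(G)\leq 0$, completing the proof.

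The argument is essentially symbolic bookkeeping with no real obstacle; the step requiring most care is the summation in (c)\,$\Rightarrow$\,(a), where the factor of two from each cross edge being counted in both $e(A_{i},\overline{A_{i}})$ and $e(A_{j},\overline{A_{j}})$ must be tracked so that the resulting inequality aligns exactly with $q^{D}_{\cA}\geq q^{E}_{\cA}$ rather than with some weaker relation.
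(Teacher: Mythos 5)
Your proof is correct and the computations are essentially those of the paper: both rest on the sign of the bipartition modularity score and on the decomposition of $q_\cA(G)$ as a sum of per-part contributions. The only difference is organizational---you close the cycle (a)$\Rightarrow$(b)$\Leftrightarrow$(c)$\Rightarrow$(a), proving (b)$\Leftrightarrow$(c) by a direct per-set cancellation, whereas the paper proves (a)$\Leftrightarrow$(b) and (a)$\Leftrightarrow$(c) separately by first observing that $\q(G)>0$ exactly when some bipartition has positive modularity; your summing argument for (c)$\Rightarrow$(a) is just the contrapositive of that bipartition reduction.
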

By Proposition~\ref{prop.q0}, if the 
graph $G$ satisfies $\q(G)=0$, then 
\[ e(A,\bar{A}) \geq \vol(A) \vol(\bar{A})/ \vol(G) \geq \min\{ \vol(A), \vol(\bar{A})\} \cdot \tfrac12  \;\; \mbox{ for all } A \subseteq V(G)\,, \]
and so we have the following corollary concerning the conductance $h_G$.
\needspace{1\baselineskip}
\begin{corollary}\label{cor.zeromodimplieshalf}
If the graph $G$ satisfies $\q(G)=0$, then $h_G \geq 1/2$.
\end{corollary}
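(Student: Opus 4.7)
The plan is to invoke Proposition~\ref{prop.q0} and then convert the product-form expansion bound $\hh_G \geq 1$ into the minimum-form Cheeger bound $h_G \geq 1/2$ by a one-line observation about which side of a cut carries at least half the volume. Indeed, essentially all the work has already been done by Proposition~\ref{prop.q0}; the corollary is just a quantitative unpacking of the inequality $h_G \geq \tfrac12 \hh_G$ recorded in \eqref{eqn.hGhhG}, specialised to the case $\hh_G \geq 1$.

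Concretely, I would argue as follows. Fix any $A$ with $\emptyset \neq A \subsetneq V(G)$. By Proposition~\ref{prop.q0}, the hypothesis $\q(G)=0$ gives
\[
e_G(A,\bar A) \;\geq\; \frac{\vol_G(A)\,\vol_G(\bar A)}{\vol_G(G)}.
\]
Without loss of generality $\vol_G(A) \leq \vol_G(\bar A)$, so $\vol_G(\bar A) \geq \tfrac12 \vol_G(G)$. Substituting this into the displayed inequality yields
\[
e_G(A,\bar A) \;\geq\; \tfrac{1}{2}\,\vol_G(A) \;=\; \tfrac{1}{2}\,\min\{\vol_G(A),\,\vol_G(\bar A)\},
\]
i.e.\ $h_G(A) \geq 1/2$. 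Taking the minimum over $A$ gives $h_G \geq 1/2$, as required.

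There is no real obstacle here: the inequality $\vol(A)\vol(\bar A)/\vol(G) \geq \tfrac12 \min\{\vol(A),\vol(\bar A)\}$ is immediate from the fact that one of the two volumes is at least $\vol(G)/2$, and Proposition~\ref{prop.q0} is assumed. The only very minor point worth a sentence is confirming that the proposition applies to \emph{every} nonempty proper subset $A$, which is exactly the form of statement (c).
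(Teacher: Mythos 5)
Your proof is correct and is essentially identical to the paper's argument: both invoke Proposition~\ref{prop.q0}(c) to get $\hh_G \geq 1$ and then use the observation that $\vol(A)\vol(\bar A)/\vol(G) \geq \tfrac12\min\{\vol(A),\vol(\bar A)\}$ (the inequality $h_G \geq \tfrac12\hh_G$ from \eqref{eqn.hGhhG}) to conclude $h_G \geq 1/2$. Nothing to add.
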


The reverse statement to the corollary does not hold. See Lemma~\ref{lem.windmill} and Figure~\ref{fig.windmill} for an example `windmill' graph with conductance $1/2$ and positive modularity. Proposition~\ref{prop.q0} also yields a short proof that complete multipartite graphs have modularity zero, see~\cite{vdense}.

\needspace{5\baselineskip}
\subsection{Relation to existing results and our contribution}\label{sec.relation}
\paragraph*{Natural conditions to guarantee a large expander subgraph} 
What properties of the graph imply that it will have an expander subgraph on a linear proportion of the edges? This question is asked in~\cite{chakraborti2022well} which shows that having a positive proportion of vertices such that a random walk starting from them is well-mixing is sufficient, indeed, it implies the graph contains an almost spanning subgraph which is an expander. We show that modularity bounded away from 1 is another such condition which guarantess a linear sized expander subgraph.


\paragraph*{Relation to spectral properties of subgraphs.}
Our results relate the expansion properties of subgraphs $H$ of $G$ to the modularity of $G$, but how about spectral properties of $H$, what do these imply about the modularity of~$G$?\\

Define the normalised Laplacian of graph $H$ to be $\mathcal{L}_H= I - D^{-1/2}A D^{-1/2}$ where $A$ is the adjacency matrix of $H$, $d_i$ is the degree of vertex $i$, and $D^{-1/2}$ is the diagonal 
matrix with $i$-th diagonal entry~$d_i^{-1/2}$.  The spectral gap of $H$ is $\bar{\lambda}_{H} = \max_{i \neq 0} |1-\lambda_i|$ where $0=\lambda_0 \leq \lambda_1 \leq \cdots$ are the eigenvalues of~$\mathcal{L}_H$. Always $0 \leq \bar{\lambda}_H \leq 1$. 
An expander-mixing lemma, see Corollary~5.5 of \cite{chung1997spectral}, says that for any graph~$H$ we have $\hh_H \geq 1-\bar{\lambda}_H$, and thus the following result is an immediate corollary of Proposition~\ref{prop.exptoqub}.
\needspace{2\baselineskip}
\begin{corollary}\label{cor.spectralub}
Let the graph $G$ have  a subgraph~$H$ with spectral gap $\bar{\lambda}_H$, and with $e(H) \geq \alpha\, e(G)$ where $\alpha>0$. Then 
\[ q^*(G) \leq 1- \alpha \min \{\alpha, 1-\bar{\lambda}_H \} .\]
\end{corollary}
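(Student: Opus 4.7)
The plan is to combine the two ingredients flagged in the paragraph preceding the statement: Proposition~\ref{prop.exptoqub}(i) applied to the subgraph $H$, and the expander-mixing lemma relating the spectral gap of $H$ to its edge-expansion-by-products constant. Since $e(H) \geq \alpha\, e(G)$, Proposition~\ref{prop.exptoqub}(i) yields directly
\[ q^*(G) \;\leq\; 1 - \alpha \min\{\hh_H, \alpha\}. \]

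Next I would invoke the expander-mixing lemma (Corollary~5.5 of~\cite{chung1997spectral}), which gives $\hh_H \geq 1 - \bar{\lambda}_H$. Because the function $x \mapsto \alpha \min\{x, \alpha\}$ is non-decreasing in $x \geq 0$, this lower bound on $\hh_H$ implies
\[ \alpha \min\{\hh_H, \alpha\} \;\geq\; \alpha \min\{1-\bar{\lambda}_H, \alpha\}, \]
and substituting into the previous display gives the claimed bound
\[ q^*(G) \;\leq\; 1 - \alpha \min\{\alpha,\, 1-\bar{\lambda}_H\}. \]

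There is essentially no substantive obstacle: the whole argument is two lines of bookkeeping once Proposition~\ref{prop.exptoqub} and the expander-mixing lemma are in hand. The only point worth a second glance is that the version of the expander-mixing lemma used in~\cite{chung1997spectral} agrees with the weighted, volume-normalised conventions for $\hh_H$ adopted in Section~\ref{subsec.defns}; this is indeed the case (both are phrased in terms of $\vol$), so no independent computation is required and the corollary follows immediately.
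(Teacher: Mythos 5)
Your proof is correct and follows exactly the route the paper intends: apply Proposition~\ref{prop.exptoqub}(i) to $H$, then lower-bound $\hh_H$ by $1-\bar{\lambda}_H$ via the expander-mixing lemma and use monotonicity of $x \mapsto \alpha\min\{x,\alpha\}$. The paper presents this as an immediate consequence of those two ingredients, so there is nothing to add.
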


Note that in the case $\alpha=1$, that is when the graph $G$ itself has spectral gap $\bar{\lambda}$ this recovers the results $\q(G)\leq \bar{\lambda}$ in \cite{fasino2014algebraic, van2010spectral} (regular graphs) and \cite{ERmod} (general graphs). Spectral results were used to prove upper bounds on the modularity in~\cite{treelike, ERmod, pralat} and lower bounds in~\cite{lason2021modularity} - see Proposition~\ref{prop.noexptoqlb2}. 


\paragraph*{Graphs with asymptotically maximal modularity}
De Montgolfier, Soto and Viennot~\cite{modgraphclasses} defined the notion of maximally modular classes of graphs as those for which $\q(G)\rightarrow 1$ as \mbox{$e(G)\rightarrow \infty$}, and showed hypercubes and tori are maximally modular, as well as $n$-vertex trees with maximum degree $\Delta=o(n^{1/5})$. This was extended to the class of trees with $\Delta=o(n)$~\cite{treelike}, to the class of graphs with treewidth such that the product of treewidth and max degree is $o(e(G))$~\cite{treelike}, and to the class of minor-free graphs with maximum degree $\Delta=o(e(G))$~\cite{lason2021modularity}. Our paper extends this to graphs $G$ where any expander subgraph $H$ satisfies $e(H)=o(e(G))$, see Corollary~\ref{cor.q0}.

\paragraph*{Graphs with bounds on modularity values}
Given that the most popular clustering algorithms for large networks are modularity-based~\cite{popular} it is important to build up our theoretical knowledge on modularity. Finding the modularity for graph classes helps us understand the behaviour of the modularity function. For a list of results see the table in~\cite{ERmod}, noting that since then it has been established in \cite{lichev_mitsche_2022} that whp random cubic graphs have modularity value in the interval $[0.667, 0.79998]$, and in~\cite{lason2021modularity} that graphs which have bounded genus and maximum degree $o(n)$ have modularity asymptotically~1. Also, the modularity of the stochastic block model is considered in~\cite{bickel2009nonparametric,cohen2020power,koshelev2023modularity} and of random geometric graphs in~\cite{davis2018consistency}. Our paper contributes to this line of work by proving upper and lower bounds on modularity in terms of the relative size and expansion of subgraphs.

\paragraph*{Key contributions of this paper}
We relate modularity and graph expansion by formulating and proving upper and lower bounds for the modularity of graphs given the sizes and expansion coefficients of their subgraphs. In the constructions, Section~\ref{sec.constructions}, we establish their modularity values and the expansion-by-products of their subgraphs. This yields the tightness of the upper bounds on~$q^*$ in Proposition~\ref{prop.exptoqub} for all values of $\alpha$ and~$\hh_H$. Previously the special cases $\q(G)\leq 1-h_G$ and $\q(G)\leq \bar{\lambda}_G$, as mentioned earlier, were known.  Recall that robustness results of~\cite{ERmod} show that, if~$G$ has a subgraph $H$ then
$\q(G)\leq \q(H) + 2e(G\backslash H)/e(G)$ which yields the
upper bound 
$\q(G)\leq \bar{\lambda}_H+2(1-\alpha)$ where $\alpha=e(H)/e(G)$ (this was used to prove that whp $\q(G_{n, d/n})=\Theta(d^{-1/2})$). 
Lower bounds on $\q$ in terms of expansion of subgraphs were used in~\cite{lason2021modularity} and we improve these - see Proposition~\ref{prop.noexptoqlb2}.\\

The other key contribution is to deepen the understanding of the well-known resolution limit for modularity established by Fortunato and Barth\'elemy~\cite{FortBart2008}. Loosely this says that modularity cannot pick up the community structure of any connected component of an $m$-edge graph if the component has fewer than $\sqrt{2m}$ edges. Theorem~\ref{thm:reslimit} determines the scale at which the community structure of a connected component becomes detectable by modularity, as a function of an expansion coefficient of the component.


\section{Proofs}\label{sec.proofs}

In this section we prove Propositions~\ref{prop.exptoqub},~\ref{prop.noexptoqlb} and~\ref{prop.q0}.  Note that this yields all the results presented in Section~\ref{sec.intro}, except for the `conversely' statements (a)(ii) and (b)(ii) in Theorem~\ref{thm} and Proposition~\ref{prop.exptoqub}(ii) which are all based on proving properties of constructions, see Section~\ref{sec.constructions}.\\

For completeness, we also include a remark on the upper bounds on $h_G$ and $\hh_G$ claimed in the introduction, as we could not find a reference.
\begin{remark}\label{rem.hbounds} 
It is easy to see that $h_G \leq 1$. Let us show that $h_G = 1$ iff $G$ is $K_3$ or $K_{1,t}$ for $t\geq 1$.
First note that $h_G=h_G(A)$ for some $A$ with $G[A]$ connected and $\vol_G(A) \leq \vol_G(\bar{A})$.
We may now see that $h_G = 1$ iff $G$ does not contain two disjoint edges. If $G$ contains two disjoint edges $uv$ and $xy$ say, and $\vol(\{u,v\}) \leq \vol(\{x,y\})$ then 
$h_G(\{u,v\}) <1$.
If $G$ does not contain two disjoint edges then for any $|A|\geq 2$ with $G[A]$ connected we have $\vol_G(A)>\vol_G(\bar{A})$, so $h_G=h_G(A)$ for some $|A|=1$ and thus $h_G=1$ and we are done.\\

To show $\hat{h}_G \leq 2$ with equality iff $G$ is $K_2$, one can argue as follows. 
For nonempty $A \subsetneq V$ with say $\vol_G(A) \leq \vol_G(\bar{A})$, since  $\vol(G) / \vol_G(\bar{A}) \leq 2$  we have
\[ \hat{h}_G(A)  \leq \frac{ e_G(A, \bar{A}) }{ \vol_G(A) } \cdot 2  \leq 2 \]
with strict inequality unless $\vol_G(A)=\tfrac12  \vol(G)$. But the only graph (without isolated vertices) such that $\vol_G(A)= \tfrac12 \vol(G)$  for each nonempty $A \subsetneq V$ is $K_2$ and we are done.
\end{remark}


\needspace{5\baselineskip}
\subsection{Proof of Proposition~\ref{prop.exptoqub}}
\begin{proof}[Proof of Proposition~\ref{prop.exptoqub}]
Let $\cA$ be a partition of $V(G)$, let $\cB=\cB(\cA, H)$ be the induced partition of $V(H)$ and define $x_B=\vol_H(B)/\vol(H)$ for parts $B\in \cB$. Let $\hdelta := \hh_H$. We will prove the statement 
\begin{equation}
    \label{eq.detailed_upperbound}
    q_\cA(G) \leq 1 - \alpha\hdelta + \alpha(\hdelta -\alpha)\sum_{B \in \cB} x_B^2.
\end{equation}

To see that \eqref{eq.detailed_upperbound} implies the proposition note $0 < \sum_B x_B^2 \leq 1$ and hence $q_\cA(G)\leq 1 - \alpha \min\{\hh_H, \alpha\}$. Since $h_H \leq \hh_H$ for any graph $H$ this implies $q_\cA(G)\leq 1 - \alpha \min\{h_H, \alpha\}$.\\

As $H$ is a $\hdelta$-expander (as defined above), for each $B\in \cB$ 
\[ e_H(B,V(H) \backslash B)/\vol(H) \geq \hdelta x_B(1-x_B).\]
Thus, 
\[ q_\cA^E(G) 
\leq 1 - \frac{\alpha}{\vol(H)} \sum_{B \in \cB} e_H(B,V(H) \backslash B)
\leq  1- \alpha\hdelta \sum_{B \in \cB} x_B(1-x_B) 
\]
and hence $q_\cA^E(G)\leq 1 - \alpha\hdelta (1- \sum_{B \in \cB} x_B^2)$. For the degree tax
\[ q_\cA^D(G) \geq \frac{\alpha^2}{\vol(H)^2} \sum_{B \in \cB} \vol_H(B)^2 = \alpha^2\sum_{B\in \cB} x_B^2\]
which establishes the statement \eqref{eq.detailed_upperbound}, and thus completes the proof of Proposition~\ref{prop.exptoqub}.\end{proof}

\needspace{5\baselineskip}
\subsection{Proof of Proposition~\ref{prop.noexptoqlb}}
We shall use two preliminary lemmas in the proof of Proposition~\ref{prop.noexptoqlb}.
For a vertex partition~$\AA$ we let $\partial_G(\AA)$ denote the number of edges between the parts of $\AA$.\\ 

It will be convenient to use a different notion of expansion for the proof of Lemma~\ref{lem.new1}. For a graph~$H$, define  its \emph{expansion-by-edges}, $h'_H$, by taking the edge boundary of sets relative to the number of edges inside the set rather than the volume of the set. Let 
\begin{figure}
\centering
\includegraphics[scale=0.5]{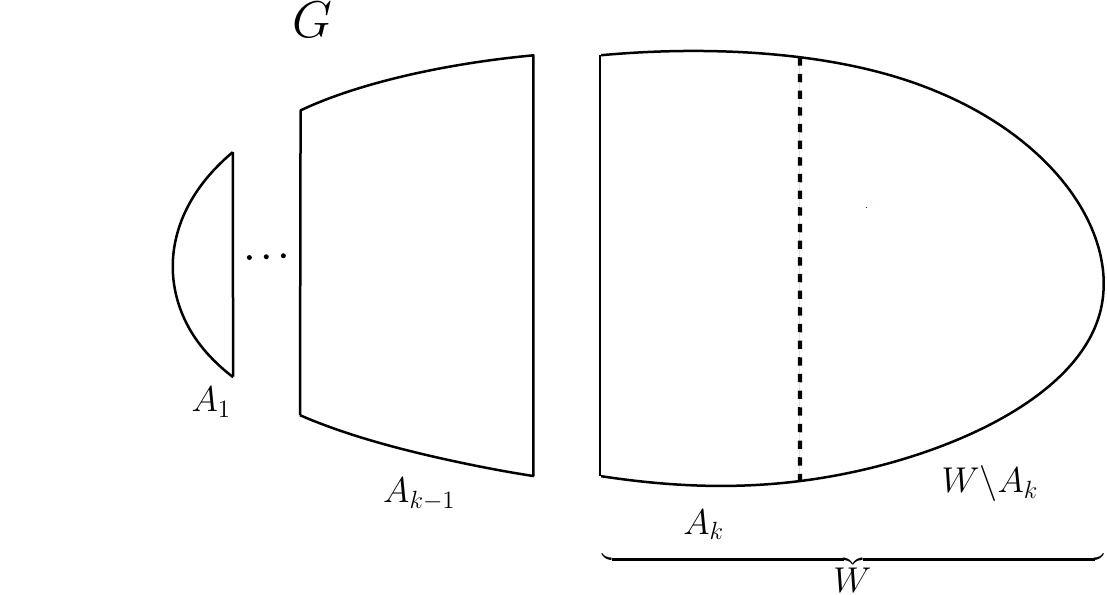}
\caption{Step $k$ for the algorithm in Lemma~\ref{lem.new1}. Since in $G$ any induced subgraph $H$ with size~$e(H) > e_0$ is not a $\delta'$-expander-by-edges (see definition on line~\eqref{eq.def_expbyedges}): if $e(W) > \max\{ \tfrac{1}{2}e(G) , e_0 \}$ there is a vertex subset $A_k$ with 
$e(A_k)  \leq e(W\backslash A_k)$ and $e(A_k, W\backslash A_k)<\delta'e(A_k)$. }\label{fig.alg}
\end{figure}
\begin{equation}\label{eq.def_expbyedges}
h'_H(A)=\frac{e_H(A, \bar{A})}{\min\{ e_H(A), e_H(\bar{A})\}} \;\;\;\;\;\;\; \mbox{and} \;\;\;\;\; h'_H=\min_{A} h'_H(A)\,,
\end{equation}
where $\bar{A}$ denotes $V(H)\backslash A$. We say $H$ is a \emph{$\delta'$-expander-by-edges} for any $0<\delta' \leq h'_H$.\\ 

Since $\vol_H(A)=e_H(A,\bar{A}) +2e_H(A)$, for $0 \leq \delta < 1$ the inequality $e_H(A, \bar{A})\geq \delta\, \vol_H(A)$ is equivalent to $e_H(A,\bar{A})\geq \frac{2\delta}{1-\delta}\, e_H(A)$. 
 Hence, for $0 \leq \delta <1$
\[ e_H(A, \bar{A})\geq \delta\min\{ \vol_H(A), \vol_H(\bar{A})\}  \;\Leftrightarrow  \; e_H(A, \bar{A})\geq \tfrac{2\delta}{1-\delta} \min\{ e_H(A), e_H(\bar{A})\}.\] 
Thus $H$ is a $\delta$-expander iff it is a $\delta'$-expander-by-edges where $\delta' = \tfrac{2\delta}{1-\delta}$, i.e.\ $h'_H=2h_H/(1-h_H)$. 
\needspace{5\baselineskip}
\begin{lemma}\label{lem.new1}
    Let $e_0 >0$ and $0<\delta< 1$; and let $G$ be a graph such that for all $U \subseteq V(G)$ with $e_G(U) > e_0$ the graph $G[U]$ is not a $\delta$-expander.  Then there is a partition $\AA$ of $V(G)$ such that 
    \begin{enumerate}[label=(\alph*)]
        \item each part $A$ of $\AA$  satisfies $e(A) \leq \max \{\frac12 e(G), e_0\}$, and 
        \item $\partial_G(\AA) \leq  \tfrac{\delta}{1+\delta} \tfrac{3+\delta}{2}\, e(G)$.
    \end{enumerate}
\end{lemma}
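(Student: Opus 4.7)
My plan is to build $\AA$ by iteratively peeling off a sparse side of the current residual. Set $W_0 = V(G)$; at step $k \geq 1$, while $e(W_{k-1}) > \max\{\tfrac12 e(G), e_0\}$, the hypothesis (legitimately applied since $e(W_{k-1}) > e_0$) yields a cut $A_k \subsetneq W_{k-1}$ with $\vol(A_k) \leq \vol(W_{k-1}\setminus A_k)$ and $e(A_k, W_{k-1}\setminus A_k) < \delta\, \vol_{G[W_{k-1}]}(A_k)$. Under the $\delta \leftrightarrow \delta'=2\delta/(1-\delta)$ dictionary recorded just above the lemma, this is equivalent to
\[ e(A_k) \;\leq\; e(W_{k-1}\setminus A_k) \qquad\text{and}\qquad e(A_k, W_{k-1}\setminus A_k) \;<\; \delta' e(A_k). \]
Declare $A_k$ a part of $\AA$, set $W_k = W_{k-1}\setminus A_k$, and iterate; the loop terminates after some $K \geq 0$ steps, giving $\AA = \{A_1,\dots,A_K, W_K\}$ (omitting $W_K$ if empty).

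\textbf{Verifying (a).} The stopping rule forces $e(W_K) \leq \max\{\tfrac12 e(G), e_0\}$. For every peeled part, $2 e(A_k) \leq e(A_k) + e(W_{k-1}\setminus A_k) \leq e(W_{k-1}) \leq e(G)$, so $e(A_k) \leq \tfrac12 e(G)$, and (a) holds.

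\textbf{Verifying (b).} Write $X = \sum_k e(A_k)$ and $B = \sum_k e(A_k, W_k)$. Since $W_k$ is exactly the union of the parts created after step $k$, the total inter-part edge count satisfies $\partial_G(\AA) = B$. The peeling step delivers $B < \delta' X$, and the decomposition $X + B + e(W_K) = e(G)$ rearranges to $B < \frac{\delta'}{1+\delta'}(e(G) - e(W_K)) = \frac{2\delta}{1+\delta}(e(G) - e(W_K))$. By itself this yields only $B < \frac{2\delta}{1+\delta} e(G)$, which is too weak. The sharpening hinges on a lower bound for $e(W_K)$: when $K \geq 1$, chaining $e(A_K) \leq e(W_K)$ with $e(A_K, W_K) < \delta' e(A_K) \leq \delta' e(W_K)$ gives $e(W_{K-1}) = e(A_K) + e(W_K) + e(A_K, W_K) < (2+\delta')\, e(W_K)$; but the loop ran at step $K$, so $e(W_{K-1}) > \tfrac12 e(G)$, and hence $e(W_K) > e(G)/(2(2+\delta'))$. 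Using the identities $2+\delta' = 2/(1-\delta)$ and $3+2\delta' = (3+\delta)/(1-\delta)$, the estimate collapses to $B < \frac{2\delta}{1+\delta}\cdot\frac{3+\delta}{4}\, e(G) = \frac{\delta(3+\delta)}{2(1+\delta)}\, e(G)$, as required. The corner case $K = 0$ forces $e(G) \leq e_0$, whence $B = 0$ and the bound is trivial.

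\textbf{Main obstacle.} The only subtlety is recovering the stated constant rather than the weaker $\frac{2\delta}{1+\delta}$. The key observation is that the final residual $W_K$ cannot be too small: at the last peel $A_K$ itself satisfies $e(A_K) \leq e(W_K)$ and has $\delta'$-sparse boundary, so $e(W_{K-1})$ is quantitatively dominated by $e(W_K)$, which when combined with the stopping inequality $e(W_{K-1}) > \tfrac12 e(G)$ forces $e(W_K) = \Omega(e(G))$. Once that is in hand the rest is the algebraic identity displayed above.
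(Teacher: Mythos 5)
Your proof is correct and follows essentially the same route as the paper's: the same greedy peeling algorithm driven by the $\delta \leftrightarrow \delta' = 2\delta/(1-\delta)$ translation to expansion-by-edges, the same telescoping bound $\partial_G(\AA) < \delta'\big(e(G) - e(W_K) - \partial_G(\AA)\big)$, and the same key sharpening via a lower bound on $e(W_K)$ obtained from the last peel $A_K$ and the stopping rule $e(W_{K-1}) > \tfrac12 e(G)$. The only differences are cosmetic (you write $e(W_K) > e(G)/(2(2+\delta'))$ directly, while the paper names $x = e(A_k)$ and $y = e(A_{k+1})$), so nothing substantive to report.
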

\begin{proof}
Let $\delta'= 2\delta/(1-\delta)$. 
Observe that for each $W \subseteq V(G)$ such that $\, e_G(W) > \max\{ \tfrac12 e(G), e_0 \}$, the graph $G[W]$ is not a $\delta'$-expander-by-edges, so there is a non-empty set $A \subset W$ such that $e_G(A) \leq e_G(W \setminus A)$ and $e_G(A, W \setminus A) < \delta'\, e_G(A)$. Thus in the following algorithm there will always be a set $A_k$ as required, and so the algorithm will succeed and will output a partition $\cA$ with $k+1 \geq 1$ parts $A_j$. See also Figure~\ref{fig.alg}.\vspace{0.1mm}

\begin{algorithm}
\SetAlgoNlRelativeSize{0}
\SetKwInOut{Input}{Input}
\SetKwInOut{Output}{Output}

    \Input{%
		\xvbox{4mm}{$\xvar{G}$} -- graph with no induced $\delta'$-expander-by-edges $H$ of size $e(H)> e_0$.\\
    }
    \Output{%
		\xvbox{4mm}{$\mathcal{A}$} -- vertex partition of $\xvar{G}$. 
    }

    \BlankLine 
    \xvbox{4mm}{$\xvar{W}$} $\leftarrow$ $V(\xvar{G})$, $\,$ 
    \xvbox{2.5mm}{$\xvar{k}$} $\leftarrow$ $0$ 

    \While{$e_{\xvar{G}}(\xvar{W}\,) > \max\{\frac{1}{2}e(\xvar{G}),e_0\}$ }{
            \xvbox{3mm}{$\xvar{k}$} $\leftarrow$ $\xvar{k}+1$ 

		pick $\varnothing \neq \xvar{A}_{\xvar{k}} \subset \xvar{W}$ with $e_{\xvar{G}}(\xvar{A}_{\xvar{k}} )\leq e_{\xvar{G}}(\xvar{W}\backslash \xvar{A}_{\xvar{k}})$ and $e_{\xvar{G}}(\xvar{A}_{\xvar{k}}, \xvar{W}\backslash \xvar{A}_{\xvar{k}} )<\delta' e_{\xvar{G}}(\xvar{A}_{\xvar{k}})$.
		
		\xvbox{3mm}{$\xvar{W}$} $\leftarrow$  $ \xvar{W} \backslash \xvar{A}_{\xvar{k}} $\;
  	} 
	$\xvar{A}_{\xvar{k}+1}$ $\leftarrow$ $V(\xvar{G})\backslash(\xvar{A}_{1} \cup \ldots \cup \xvar{A}_{\xvar{k}})$  \;
\Return{ $\AA = \{\xvar{A}_1, \ldots \xvar{A}_{\xvar{k}+1} \} $}
\end{algorithm}

Clearly $e_G(A_j) \leq \frac12 e(G)$ for each $j=1.\ldots,k$ and $e_G(A_{k+1}) \leq \max\{\frac12 e(G), e_0\}$, so $\AA$ has property~(a) in the lemma.\\

If $k=0$ then $\cA$ is trivial and $\partial_G(\cA)=0$.
If $k \geq 1$ then
\begin{eqnarray*}
\partial_G(\AA) & = &
e_G(A_1,V(G) \setminus A_1) + e_G(A_2,V(G) \setminus (A_1 \cup A_2)) + \cdots + e_G(A_k,A_{k+1})\\
& \leq &
\delta' e_G(A_1) +  \delta' e_G(A_2) + \cdots + \delta' e_G(A_{k})\\
&=&
\delta' \, \sum_{j=1, \ldots, k}^{k} e_G(A_j) \;\; = \;\; \delta' (e(G) - e_G(A_{k+1}) - \partial_G(\AA)).
\end{eqnarray*}
The last two steps of the algorithm ensure that $ e_G(A_k)\leq e_G(A_{k+1})$ and $e_G(A_k \cup A_{k+1}) > \max\{ \tfrac12 e(G), e_0\}$. Also, we have 
\[e_G(A_k \cup A_{k+1})= e_G(A_k)+ e_G(A_{k+1})+e_G(A_k,A_{k+1})\leq e_G(A_k)+ e_G(A_{k+1})+\delta' e_G(A_k),\]
where the inequality comes from line $5$ of the algorithm.
Therefore, letting $x=e_G(A_k)$ and $y=e_G(A_{k+1})$, 
\[ x+ \delta' x + y \geq e_G(A_k \cup A_{k+1}) > \max\{ \tfrac12 e(G), e_0\} \geq \tfrac12 e(G).\]
Thus, since $x \leq y$, we have $(2 + \delta')y > \tfrac12 e(G)$, and so
$y > \tfrac1{4+2\delta'} e(G)$. Thus
\[ (1+\delta')\, \partial_G(\AA) \leq \delta' (e(G) -y) \leq \delta' \, \tfrac{3+2\delta'}{4+2 \delta'}\, e(G)\,.\]
Hence
\[ \partial_G(\AA) \leq \tfrac{\delta'}{1+\delta'}\, \tfrac{3+2\delta'}{4+2 \delta'}\, e(G)  = \tfrac{\delta}{1+\delta} \tfrac{3+\delta}{2} e(G)\,, \]
as required for property (b).\end{proof}

The following lemma records properties of the resulting partition after applying the algorithm in Lemma~\ref{lem.new1} to each part of a given partition.
Given a vertex partition $\AA$ of a graph~$G$, let $\maxin_G(\AA)$ denote $\max\{e_G(A) : A \in \AA \}$ and $\maxout_G(\AA)$ denote $\max\{e_G(A,\bar{A}) : A \in \AA \}$.  Thus property (a) in the last lemma says that $\maxin_G(\cA) \leq \max\{ \tfrac12 e(G), e_0 \}$.

\needspace{5\baselineskip}
\begin{lemma} \label{lem.new2}
Let $e_0 >0$ and $0<\delta< 1$; let $\rho =  \tfrac{\delta}{1+\delta} \tfrac{3+\delta}{2}$  (so $\delta < \rho < \tfrac32 \delta$); and let $G$ be a graph such that for all $U \subseteq V(G)$ with $e_G(U) \geq e_0$ the graph $G[U]$ is not a $\delta$-expander. 
Let~$\cA$ be a partition of $V(G)$. Then there is a partition $\cB$ refining $\cA$ such that 
    \begin{enumerate}[label=(\alph*)] 
        \item $\partial_G(\cB) \leq (1-\rho)\,  \partial_G(\cA) + \rho\, e(G)\,$, and
        \item $\maxin_G(\cB) \leq \max \{\tfrac12 \maxin_G(\cA), e_0 \}$, and 
        \item $\maxout_G(\cB) \leq \maxout_G(\cA) + \rho \maxin_G(\cA)$.
    \end{enumerate}
\end{lemma}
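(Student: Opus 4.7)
\textbf{Proof plan for Lemma~\ref{lem.new2}.}
The strategy is to apply Lemma~\ref{lem.new1} separately inside each part of $\cA$ and take the common refinement. Fix a part $A \in \cA$ and consider the induced subgraph $G[A]$. For any $U \subseteq A$ we have $e_{G[A]}(U) = e_G(U)$ and $G[A][U] = G[U]$, so the hypothesis on $G$ (that no induced subgraph on $\geq e_0$ edges is a $\delta$-expander) is inherited by $G[A]$, and in particular the strict version $e_G(U) > e_0$ required by Lemma~\ref{lem.new1} also holds. So we may apply Lemma~\ref{lem.new1} to each $G[A]$ and obtain a partition $\cB_A$ of $A$ satisfying
\[
\maxin_G(\cB_A) \leq \max\{\tfrac12 e_G(A),\, e_0\} \quad \text{and} \quad \partial_{G[A]}(\cB_A) \leq \rho\, e_G(A).
\]
Define $\cB = \bigcup_{A \in \cA} \cB_A$; this clearly refines $\cA$.

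For part (a), the cut edges of $\cB$ split into those that already crossed $\cA$ and those that lie inside some $A$ but cross $\cB_A$. Hence
\[
\partial_G(\cB) \;=\; \partial_G(\cA) + \sum_{A \in \cA} \partial_{G[A]}(\cB_A) \;\leq\; \partial_G(\cA) + \rho \sum_{A \in \cA} e_G(A) \;=\; \partial_G(\cA) + \rho\bigl(e(G) - \partial_G(\cA)\bigr),
\]
which simplifies to $(1-\rho)\partial_G(\cA) + \rho\, e(G)$ as claimed. Part (b) is immediate: each $B \in \cB$ lies inside some $A \in \cA$, so
\[
e_G(B) \;\leq\; \max\{\tfrac12 e_G(A),\, e_0\} \;\leq\; \max\{\tfrac12 \maxin_G(\cA),\, e_0\}.
\]

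For part (c), fix $B \in \cB$ and let $A$ be the part of $\cA$ containing $B$. Split the boundary of $B$ as
\[
e_G(B, V\setminus B) \;=\; e_G(B, A \setminus B) + e_G(B, V \setminus A).
\]
The first summand is at most $\partial_{G[A]}(\cB_A) \leq \rho\, e_G(A) \leq \rho\, \maxin_G(\cA)$, and the second is at most $e_G(A, V\setminus A) \leq \maxout_G(\cA)$. Adding gives the bound in (c). I expect no substantive obstacle beyond this bookkeeping, since the real work has been done in Lemma~\ref{lem.new1}; the only mild care required is verifying that the hypothesis on non-expansion descends to $G[A]$ and, in part (c), correctly separating intra-part from inter-part boundary contributions to avoid double-counting.
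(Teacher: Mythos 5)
Your proof is correct and follows essentially the same route as the paper's: apply Lemma~\ref{lem.new1} inside each part $A \in \cA$, take the union of the resulting partitions $\cB_A$, and do the same bookkeeping for (a), (b), (c), including the same boundary split $e_G(B,\bar{B}) = e_G(B, A\setminus B) + e_G(B, \bar{A})$ for part (c). The observation that the non-expansion hypothesis descends to $G[A]$ (since $e_{G[A]}(U)=e_G(U)$ and $G[A][U]=G[U]$ for $U\subseteq A$) is correct and is the only point the paper leaves implicit.
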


\begin{proof}
We consider each part $A \in \cA$ separately and apply Lemma~\ref{lem.new1}: we see that there is a partition $\cB_A$ of $A$ such that
\[\maxin_G(\cB_A) \leq \max\{\tfrac12 e_G(A), e_0\} \leq \max \{ \tfrac12 \maxin_G(\cA), e_0 \}\]
and
\begin{equation}\label{eq.edges_between} \partial_G(\cB_A) \leq  \rho \, e_G(A) \,.\end{equation} 
Putting together the partitions $\cB_A$ gives a refinement $\cB$ of $\cA$ such that 
\[\maxin_G(\cB) \leq \max \{ \tfrac12 \maxin_G(\cA), e_0 \}\]
and
\begin{eqnarray*}
\partial_G(\cB) &=&
\partial_G(\cA) + \sum_{A \in \cA} \partial_G(\cB_A)
\; \leq \; 
\partial_G(\cA) + \rho \sum_{A \in \cA} e_G(A)\\
& \leq & \partial_G(\cA) + \rho \, (e(G) - \partial_G(\cA)) \; = \; 
(1-\rho) \,\partial_G(\cA) + \rho \, e(G).
\end{eqnarray*}
For any $B\in \cB$ and $A \in \cA$ such that $B \in \cB_A$ observe that 
\[e_G(B, \bar{B}) = e_G(B,A\backslash B) + e_G(B, \bar{A}) \leq \partial_G(\cB_A)+  e_G(A, \bar{A} ).\]
and so (c) follows by \eqref{eq.edges_between} which completes the proof.\end{proof}

We may now use Lemma~\ref{lem.new2} to complete the proof of Proposition~\ref{prop.noexptoqlb}.
\begin{proof}[Proof of Proposition~\ref{prop.noexptoqlb}]
Let $k=\lceil \log_2 \tfrac1{\alpha} \rceil$ and $e_0=\alpha e(G)$. 
Start with the trivial one-part partition~$\cA^0$ of $V(G)$, with $\maxin_G(\cA^0)=e(G)$, $\maxout_G(\cA^0)=0$  and $\partial_G(\cA^0)=0$.
Now apply the last lemma $k$ times, repeatedly refining the partition.  We obtain a sequence $\AA^1, \ldots, \AA^k$ of partitions of~$V(G)$ such that for each $j=1,\ldots,k$ 
\begin{eqnarray}
\label{eq.track_cross_edges} && \partial_G(\AA^j) \leq (1-\rho) \, \partial_G(\AA^{j-1}) + \rho \, e(G)\,,\\    
\notag && \maxin_G(\cA^j) \leq \max \{ 2^{-j}, \alpha \}\, e(G)\,, \mbox{ }\\
\notag && {\maxout_G(\cA^j) \leq \maxout_G(\cA^{j-1})+\rho \maxin_G(\cA^{j-1})\,.}
\end{eqnarray}
Denote $\cA^k$ by $\cB$. Since $2^{-k} \leq \alpha$ we have 
\begin{equation}\label{eq.maxin}
    \maxin_G(\cB) \leq \alpha\, e(G).
\end{equation}
Also
\[ \partial_G(\cB)  \leq 
 \rho\, e(G) \left( (1-\rho)^{k-1}  + \cdots + (1-\rho) + 1 \right)  =  e(G)\, (1-(1-\rho)^k)\,,\]
and
$1-(1-\rho)^k \leq k \rho$, 
so $\partial_G(\cB)  < k \rho \, e(G)$.
Hence $q^E_{\cB}(G) = 1- \partial_G(\cB)/e(G) > 1- k\rho \,$.\\

We may also bound $\maxout_G(\cB)$ similarly, 
\begin{equation}\label{eq.maxout}
    \maxout_G(\cB)  \leq 
 \rho\, e(G) \left( 1 + \tfrac{1}{2}+\tfrac{1}{4}+\ldots+\tfrac{1}{2^{k-1}}\right)  < 2 \rho\, e(G) \,,
\end{equation}
and hence by \eqref{eq.maxin} and \eqref{eq.maxout} (and since $\vol_G(A)\leq 2e(G)$)
\begin{eqnarray}
\notag  \max \{\vol_G(A): A \in \cB\} 
&=&
  \max \{2e_G(A)+ e_G(A,\bar{A}) : A \in \cB\}\\
\label{eq.max_vol} & \leq &
  2 \maxin_G(\cB) + \maxout_G(\cB) \;\; < \; 2\min\{1,(\alpha+\rho)\} e(G). 
\end{eqnarray}
Recall that by convexity of the sum of squares if we have non-negative $\{x_i\}_i$ with $\sum_i x_i=1$ and $\max_i x_i \leq x$ then the quantity $\sum_i x_i^2$ is maximized when we take as many $x_i=x$ as possible plus one $x_j=1-x\lfloor \frac{1}{x}\rfloor$, hence we always have $\sum_i x_i^2\leq f(x)$.\\

For $A\in \cB$ set $x_A=\vol_G(A)/2e(G)$ and thus \eqref{eq.max_vol} gives us 
\[q^D_{\cB}(G) = \sum_{A\in \cB} x_A^2 < f(\min\{1,\alpha +\rho\}) \,.\] 
Finally we have
\[ \q(G) \geq q_{\cB}(G) > 1-f(\min\{1,\alpha +\rho\}) - k\rho\,,\]
and since $\rho<\frac 3 2 \delta$ and $f$ is increasing, we obtain
\[\q(G)>1-f\left(\min\{1,\alpha+\tfrac 3 2 \delta\}\right) -  \tfrac {3k} 2 \delta\]as required. \end{proof}

\needspace{5\baselineskip}
\subsection{Proof of Theorem~\ref{thm:reslimit}}
\begin{figure}
    \centering
    \includegraphics[scale=0.7]{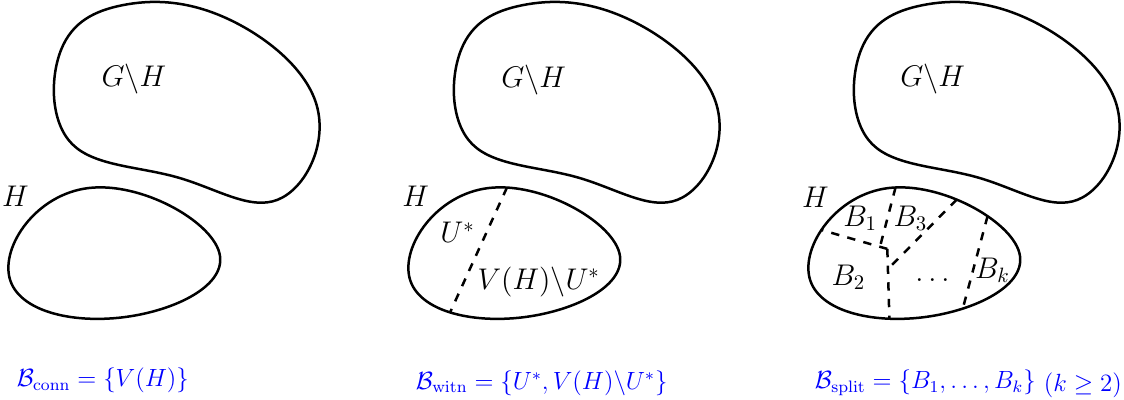}
    \caption{The three vertex partitions of $H$ considered  in the proof of Theorem~\ref{thm:reslimit}.}\label{fig.partitionH}
\end{figure}
\begin{proof}[Proof of Theorem~\ref{thm:reslimit}]
Let $\cA$ be an optimal partition of $G$. Since $G$ has no isolated vertices and $\cA$ is optimal: for any $A \in \cA$ the induced subgraph $G[A]$ must be connected. Hence any part in $\cA$ is entirely within $V(H)$ or $V(G)\backslash V(H)$. Let $\cB \subset \cA$ be the set of parts in $V(H)$. The contribution of~$H$ to the modularity score of $\cA$ is
\begin{equation}\label{eq.partialmod}
    \sum_{B \in \cB} \frac{e(B)}{e(G)} - \frac{\vol(B)^2}{\vol(G)^2}
\end{equation}
which we will denote by $q_\cB(H,m)$. We shall consider three partitions on $V(H)$: the connected component partition $\cB_{\rm conn}$ placing all of $V(H)$ into one part, $\cB_{\rm witn}$ a bipartition 
into $\{U^*, V(H)\backslash U^*\}$ where $U^*$ is any witness, i.e. such that $\hh_H=\hh_H(U^*)$, and $\cB_{\rm split}$ a partition into at least two parts which achieves the maximal value for \eqref{eq.partialmod} i.e. $\cB_{\rm split}= \arg \max_{|\cB|>1} q_{\cB}(H,m)$. See Figure~\ref{fig.partitionH}. Note it is possible that $\cB_{\rm split}=\cB_{\rm witn}$. We proceed via the following three claims which together imply the theorem. Write $\hdelta=\hh_H$.

\begin{claim}\label{claim.res1} If $\hdelta> \alpha$, then $q_{\cB_{\rm conn}}(H, m) > q_{\cB_{\rm split}}(H, m)$.\end{claim}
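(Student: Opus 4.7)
The plan is to compute $q_{\cB_{\rm conn}}(H,m)$ exactly and then use the defining inequality for $\hh_H$ to show that no partition with at least two parts can match it. Directly from \eqref{eq.partialmod}, since $\cB_{\rm conn}$ has the single part $V(H)$ with $e(H) = \alpha e(G)$ internal edges and volume $2 e(H)$,
\[
q_{\cB_{\rm conn}}(H,m) \;=\; \alpha \;-\; \alpha^{2}.
\]

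Now let $\cB = \{B_1,\ldots,B_k\}$ be any partition of $V(H)$ with $k \geq 2$, and set $x_i := \vol(B_i)/\vol(H)$, so $\sum_i x_i = 1$. The definition of $\hh_H$ gives, for every $i$,
\[
e_H\bigl(B_i,\,V(H)\!\setminus\! B_i\bigr) \;\geq\; \hh_H\,\frac{\vol(B_i)\,\vol(V(H)\setminus B_i)}{\vol(H)}.
\]
Summing over $i$ and using that each cross-edge is counted twice on the left yields
\[
\partial_H(\cB) \;\geq\; e(H)\,\hh_H\,\Bigl(1 - \sum_i x_i^{2}\Bigr).
\]
Since $\sum_i e(B_i) = e(H) - \partial_H(\cB)$ and $\sum_i \vol(B_i)^2/\vol(G)^2 = \alpha^{2}\sum_i x_i^{2}$, substituting into \eqref{eq.partialmod} and rearranging gives the clean one-parameter bound
\[
q_{\cB}(H,m) \;\leq\; \alpha(1-\hh_H) \;+\; \alpha(\hh_H - \alpha)\sum_i x_i^{2}.
\]

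To conclude, the hypothesis $\hh_H > \alpha$ makes the coefficient $\alpha(\hh_H-\alpha)$ strictly positive, so the upper bound is increasing in $\sum_i x_i^{2}$. Because $G$ has no isolated vertices, the same holds for $H$, so each $\vol(B_i) \geq 1$ and hence every $x_i > 0$; together with $k \geq 2$ this forces $\sum_i x_i^{2} < \bigl(\sum_i x_i\bigr)^{2} = 1$. Therefore
\[
q_{\cB}(H,m) \;<\; \alpha(1-\hh_H) + \alpha(\hh_H - \alpha) \;=\; \alpha - \alpha^{2} \;=\; q_{\cB_{\rm conn}}(H,m),
\]
and taking the maximum over all $\cB$ with $|\cB| \geq 2$ delivers the claim.

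I do not expect a genuine obstacle here: the argument is a single application of the $\hh_H$ inequality plus bookkeeping. The two points that need care are the algebraic rearrangement which makes the upper bound meet $q_{\cB_{\rm conn}}$ exactly at $\sum_i x_i^{2} = 1$, and the observation that each part of any nontrivial $\cB$ carries strictly positive volume (which is where the absence of isolated vertices is used to upgrade weak to strict inequality, as required for $q_{\cB_{\rm split}}$ rather than $q_{\cB_{\rm conn}}$ itself).
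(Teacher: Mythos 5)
Your proposal is correct and follows essentially the same route as the paper: you compute $q_{\cB_{\rm conn}}(H,m) = \alpha - \alpha^2$, apply the defining inequality for $\hh_H$ part by part to bound $q_{\cB}(H,m) \leq \alpha(1-\hh_H) + \alpha(\hh_H - \alpha)\sum_i x_i^2$, and conclude strictness from $\sum_i x_i^2 < 1$ for any partition with at least two parts. The only cosmetic difference is that you aggregate the cross-edges into $\partial_H(\cB)$ before substituting, whereas the paper keeps the sum over $B \in \cB$ intact throughout; the algebra and the final inequality are the same.
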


\begin{claim}\label{claim.res2} If $\hdelta < \alpha$, then $q_{\cB_{\rm witn}}(H, m) > q_{\cB_{\rm conn}}(H, m)$.\end{claim}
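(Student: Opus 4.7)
The plan is to evaluate $q_{\cB_{\rm conn}}(H,m)$ and $q_{\cB_{\rm witn}}(H,m)$ explicitly and then compare them. Since $H$ is a connected component of $G$, no edge of $G$ leaves $V(H)$, so $\vol_G(S)=\vol_H(S)$ for any $S\subseteq V(H)$, and in particular $\vol_G(V(H))=2e(H)=2\alpha m$. Thus for the trivial partition
\[
q_{\cB_{\rm conn}}(H,m) \;=\; \frac{e(H)}{m}-\frac{\vol(H)^2}{4m^2} \;=\; \alpha-\alpha^2.
\]

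Next I will write $\cB_{\rm witn}=\{U^*,W^*\}$ with $W^*=V(H)\setminus U^*$, and set $a=\vol_H(U^*)$, $b=\vol_H(W^*)$, so that $a+b=2\alpha m$. Using $e(U^*)+e(W^*)=e(H)-e_H(U^*,W^*)$ and $\vol(U^*)^2+\vol(W^*)^2=(a+b)^2-2ab$, a direct computation gives
\[
q_{\cB_{\rm witn}}(H,m) \;=\; \alpha-\frac{e_H(U^*,W^*)}{m}-\alpha^2+\frac{ab}{2m^2},
\]
and hence
\[
q_{\cB_{\rm witn}}(H,m)-q_{\cB_{\rm conn}}(H,m) \;=\; \frac{ab}{2m^2}-\frac{e_H(U^*,W^*)}{m}.
\]

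The final step is to substitute the definition of the witness: since $\hh_H=\hh_H(U^*)$, we have $e_H(U^*,W^*)=\hdelta\cdot ab/\vol(H)=\hdelta\cdot ab/(2\alpha m)$. Plugging this in gives
\[
q_{\cB_{\rm witn}}(H,m)-q_{\cB_{\rm conn}}(H,m) \;=\; \frac{ab}{2m^2}\Bigl(1-\frac{\hdelta}{\alpha}\Bigr) \;=\; \frac{ab}{2\alpha m^2}\,(\alpha-\hdelta),
\]
which is strictly positive under the hypothesis $\hdelta<\alpha$ (note $a,b>0$ since $U^*$ and $W^*$ are nonempty subsets of the connected component $H$, so each touches at least one edge).

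I do not anticipate any real obstacle: the argument is essentially a bookkeeping calculation once one uses $\vol(H)=2e(H)$ and the exact identity $e_H(U^*,W^*)=\hdelta\cdot\vol(U^*)\vol(W^*)/\vol(H)$ from the definition \eqref{def.hh}. The only point to be careful about is keeping the normalisations straight: the modularity contribution uses $\vol(G)=2m$ in the denominator, while the expansion coefficient uses $\vol(H)=2\alpha m$, and the ratio between these is exactly what produces the factor $\alpha-\hdelta$ in the final line.
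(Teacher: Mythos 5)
Your proof is correct and follows essentially the same approach as the paper: compute $q_{\cB_{\rm conn}}(H,m)=\alpha-\alpha^2$, expand $q_{\cB_{\rm witn}}(H,m)$ for the bipartition $\{U^*,V(H)\setminus U^*\}$, and substitute the exact witness identity $e_H(U^*,W^*)=\hdelta\,\vol(U^*)\vol(W^*)/\vol(H)$ to get a difference proportional to $\alpha-\hdelta$. The paper phrases the same calculation via the normalised variables $x_B=\vol(B)/\vol(H)$, obtaining $q_{\cB_{\rm witn}}-q_{\cB_{\rm conn}}=\alpha(\alpha-\hdelta)\bigl(1-x_{U^*}^2-(1-x_{U^*})^2\bigr)$, which is exactly your $\tfrac{ab}{2\alpha m^2}(\alpha-\hdelta)$ after the change of variables.
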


\begin{claim}\label{claim.res3} If $\hdelta = \alpha$, then $q_{\cB_{\rm conn}}(H, m) = q_{\cB_{\rm witn}}(H, m) = q_{\cB_{\rm split}}(H, m)$.\end{claim}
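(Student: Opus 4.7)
I would reduce all three claims to a single algebraic identity that expresses $q_{\cB_{\rm conn}}(H,m) - q_{\cB}(H,m)$, for an arbitrary partition $\cB$ of $V(H)$, as a sum over parts whose sign is controlled by $\hh_H(B)/\alpha - 1$. Each claim then amounts to inspecting this sign in the appropriate case.

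\textbf{Deriving the identity.} Starting from \eqref{eq.partialmod}, I would expand $q_{\cB_{\rm conn}}(H,m) - q_{\cB}(H,m)$ using the two routine identities $e(H) - \sum_{B \in \cB} e(B) = \tfrac12 \sum_{B \in \cB} e_H(B, V(H)\setminus B)$ and $\vol(H)^2 - \sum_{B \in \cB} \vol(B)^2 = \sum_{B \in \cB} \vol(B)\vol(V(H)\setminus B)$. Substituting the definition of $\hh_H(B)$ and using $\vol(H) = 2e(H) = 2\alpha m$ (valid because $H$ is a connected component of $G$) should yield
\begin{equation}\label{eq.plan_key}
q_{\cB_{\rm conn}}(H,m) - q_{\cB}(H,m) \;=\; \sum_{B \in \cB} \frac{\vol(B)\vol(V(H)\setminus B)}{4m^2}\left(\frac{\hh_H(B)}{\alpha} - 1\right).
\end{equation}

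\textbf{Reading off the claims from \eqref{eq.plan_key}.} For Claim~\ref{claim.res1}, apply \eqref{eq.plan_key} to $\cB = \cB_{\rm split}$: since $|\cB_{\rm split}| \geq 2$ and $H$ is connected, each summand has $\vol(B)\vol(V(H)\setminus B) > 0$, and $\hh_H(B) \geq \hdelta > \alpha$ makes every summand strictly positive, giving $q_{\cB_{\rm conn}}(H,m) > q_{\cB_{\rm split}}(H,m)$. For Claim~\ref{claim.res2}, apply \eqref{eq.plan_key} to $\cB = \cB_{\rm witn} = \{U^*, V(H)\setminus U^*\}$: the two summands share the common factor $\hh_H(U^*)/\alpha - 1 = \hdelta/\alpha - 1 < 0$, so the right-hand side is strictly negative, giving $q_{\cB_{\rm witn}}(H,m) > q_{\cB_{\rm conn}}(H,m)$. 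For Claim~\ref{claim.res3}, the case $\cB = \cB_{\rm witn}$ of \eqref{eq.plan_key} vanishes exactly, so $q_{\cB_{\rm witn}} = q_{\cB_{\rm conn}}$; the case $\cB = \cB_{\rm split}$ of \eqref{eq.plan_key} has every summand non-negative (since $\hh_H(B) \geq \hdelta = \alpha$), giving $q_{\cB_{\rm conn}} \geq q_{\cB_{\rm split}}$; and since $\cB_{\rm witn}$ has two parts, it is a competitor in the maximisation defining $\cB_{\rm split}$, yielding the reverse inequality and hence equality throughout.

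\textbf{Main obstacle.} There is no conceptual difficulty; the only care required is the bookkeeping to verify \eqref{eq.plan_key}, in particular keeping track of the factor $\vol(H) = 2\alpha m$ and the double-counting $\sum_B e_H(B, V(H)\setminus B) = 2\, \partial_H(\cB)$ so that the identity comes out cleanly in the stated form. Once \eqref{eq.plan_key} is in hand, each claim is a one-line sign check, and it is perhaps mildly striking that the product-based quantity $\hh_H$ (defined as a minimum over bipartitions) controls splits into arbitrarily many parts via the sum-over-parts form of \eqref{eq.plan_key}.
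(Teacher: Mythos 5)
Your proof is correct and follows essentially the same approach as the paper: express the modularity contribution of a partition of $V(H)$ in terms of $\alpha$ and $\hh_H(B)$, compare with $q_{\cB_{\rm conn}}(H,m) = \alpha - \alpha^2$, and close the cycle of inequalities by observing that $\cB_{\rm witn}$ is a competitor in the maximum defining $\cB_{\rm split}$. Your identity~\eqref{eq.plan_key} is a clean exact repackaging of the same computations (the paper applies the bound $\hh_H(B)\geq\hdelta$ slightly earlier, producing~\eqref{eq.Bconn_Bsplit} and~\eqref{eq.Bwitn_Bconn}), but it is not a genuinely different argument.
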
 

\begin{proof}[Proof of Claim~\ref{claim.res1}]
Observe first that
\begin{equation}\label{eq.Bconn}
    q_{\cB_{\rm conn}}(H, m) = \frac{e(H)}{e(G)} - \frac{\vol(H)^2}{\vol(G)^2} = \alpha - \alpha^2. 
\end{equation}
Denote by $x_B$ the proportion of the volume of $H$ contained in part $B$, $x_B=\vol(B)/\vol(H)$. Then for any partition $\cB$ of $V(H)$
\begin{equation*}
    q_\cB(H, m) = \frac{e(H)}{e(G)} - \sum_{B \in \cB} \big( \frac{e(B, V(H)\backslash B)}{\vol(G)} + \frac{\vol(B)^2}{\vol(G)^2}\big) = \alpha - \sum_{B \in \cB} \big(\frac{\alpha \; e(B, V(H)\backslash B)}{\vol(H)} + \alpha^2 x_B^2 \big)\,.
\end{equation*}

But $e(B, V(H)\backslash B)\geq \hdelta \vol(B) \vol(V(H)\backslash B)/\vol(H)$ and so by the above equation 
\begin{equation*}
    q_{\cB_{\rm split} } (H, m) \leq  \alpha -  \alpha \hdelta \sum_{B \in \cB_{\rm split}} x_B(1-x_B) - \alpha^2 \sum_{B \in \cB_{\rm split} } x_B^2 = \alpha - \alpha \hdelta + \alpha(\hdelta - \alpha) \sum_{B \in \cB_{\rm split} } x_B^2
\end{equation*}
where the equality above is by noting $\sum_{B \in \cB_{\rm split} } x_B = 1$. Hence by~(\ref{eq.Bconn})
\begin{equation}\label{eq.Bconn_Bsplit}
q_{\cB_{\rm conn}}(H, m) - q_{\cB_{\rm split}}(H, m) \geq \alpha(\hdelta - \alpha)(1-\textstyle{\sum_{B \in \cB_{\rm split}}} x_B^2).
\end{equation}
Since $\sum_{B \in \cB_{\rm split} } x_B^2< 1$, if $\hdelta> \alpha$ we have $q_{\cB_{\rm conn}}(H, m) > q_{\cB_{\rm split}}(H, m)$ as required.\end{proof}

\begin{proof}[Proof of Claim~\ref{claim.res2}]
Note that since $U^*$ a witness of $\hat{h}_H=\hdelta$, we get that $e(U^*,V(H)\backslash U^*) = \hdelta \vol(U^*) \vol(V(H)\backslash U^*) / \vol(H)$. Thus similarly to $q_{\cB_{\rm split} } (H, m)$,
\begin{equation*}
    q_{\cB_{\rm witn}}(H, m) = \alpha - \alpha \hdelta + \alpha(\hdelta - \alpha) (x_{U^*}^2 + (1-x_{U^*})^2) ,
\end{equation*}
and so
\begin{equation}\label{eq.Bwitn_Bconn}
q_{\cB_{\rm witn}}(H, m) - q_{\cB_{\rm conn}}(H, m) \geq \alpha(\alpha - \hdelta)(1-x_{U^*}^2 -(1-x_{U^*})^2 ) .
\end{equation}
Hence if $\alpha > \hdelta$ we have $q_{\cB_{\rm witn}}(H, m) > q_{\cB_{\rm conn}}(H, m)$ as required. \end{proof}
\begin{proof}[Proof of Claim~\ref{claim.res3}]
This follows almost immediately from previous calculations. By \eqref{eq.Bconn} and~\eqref{eq.Bwitn_Bconn}, if $\hdelta=\alpha$ then $q_{\cB_{\rm conn}}(H, m) = q_{\cB_{\rm witn}}(H, m)$; and by \eqref{eq.Bconn_Bsplit} if $\hdelta=\alpha$ then $q_{\cB_{\rm conn}}(H, m) \geq q_{\cB_{\rm split}}(H, m)$. However by definition, always $q_{\cB_{\rm split}}(H, m) \geq q_{\cB_{\rm witn}}(H, m)$; and hence we must have equality between all three, as required.\end{proof}\end{proof}

\needspace{5\baselineskip}
\subsection{Proof of Proposition~\ref{prop.q0}}\label{subsec.propq0}
In this subsection we often drop the subscript $G$ as it is not needed.
\begin{proof}[Proof of Proposition~\ref{prop.q0}]
We first show (a) and (c) to be equivalent. \\

Let $G=(V,E)$ be a nonempty graph, and let $\nu = \vol(G)$.  For $U \subseteq V$, let $p_G(U) = 2e(U) \nu - \vol(U)^2$.  Observe that $p_G(\emptyset) = p_G(V) =0$.
If $\cA$ is a partition of $V(G)$, then $q_{\cA}(G) = \nu^{-2}  \sum_{A \in \cA} p_G(A)$.  Since $\vol(U) = 2e(U) + e(U,\bar{U})$ (where $\bar{U}$ denotes $V \backslash U$), we have
\begin{eqnarray*}
  p_G(U) 
        & = & 2e(U) \nu - \vol(U)^2 = (\vol(U) - e(U,\bar{U})) \nu - \vol(U) (\nu - \vol(\bar{U}))\\
        & = & - e(U,\bar{U}) \nu  + \vol(U) \vol(\bar{U}).
\end{eqnarray*}

Thus we may also write $p_G(U)$ as $\vol(U) \vol(\bar{U}) - e(U, \bar{U}) \nu$; and this expression for $p_G(U)$ is symmetric in $U$ and $\bar{U}$, so $p_G(U) = p_G(\bar{U})$.
Now, if $\cA$ is a bipartition with parts $U$ and $\bar{U}$, then
\[ q_{\cA}(G) = \nu^{-2} (p_G(U) + p_G(\bar{U})) = 2 \nu^{-2} p_G(U).\]
Thus there is a bipartition $\cA$ with $q_{\cA}(G)>0$ iff there is a set $U$ of vertices with $p_G(U)>0$, iff $\q(G)>0$. Equivalently, $\q(G)=0$ iff $p_G(A) \leq 0$ for each $A \subset V$, iff
\[ e(A, \bar{A}) \geq  \frac{\vol(A)\, \vol(\bar{A})}{\vol(G)} \;\; \mbox{ for each } A \subseteq V\,. \]
This gives the equivalence of (a) and (c) in Proposition~\ref{prop.q0}.\\

Now let us prove the equivalence of (a) and (b) in Proposition~\ref{prop.q0}.
Let $\emptyset \neq A \subset V$ and let $\cA$ be the bipartition of $V$ with parts $A$ and $\bar{A}$. Then
\begin{eqnarray*}
m^2 q_{\cA}(G)
  &=&
 (e(A)+e(\bar{A}))\, m - \tfrac14 ((2e(A)+e(A,\bar{A}))^2 + (2e(\bar{A})+e(A,\bar{A}))^2\\
 &=& 
 (e(A)+e(\bar{A})) \left(e(A)+e(\bar{A}) + e(A, \bar{A}) \right) \\
 && - \left(e(A)^2 + e(\bar{A})^2 +  (e(A)+e(\bar{A}))\, e(A,\bar{A}) + \tfrac12 e(A, \bar{A})^2 \right) \\ 
 &=& 
 (e(A)+e(\bar{A}))^2 - \left(e(A)^2 + e(\bar{A})^2 + \tfrac12 e(A, \bar{A})^2 \right) \\
 & = &
 2 e(A) e(\bar{A}) - \tfrac12 e(A, \bar{A})^2\,.
\end{eqnarray*}
Hence
\[ q_{\cA}(G)>0 \; \mbox{ iff }\; e(A, \bar{A}) < 2 \sqrt{e(A) e(\bar{A})}\,,\] 
and so (since $\q(G)>0$ iff $q_\cA(G)>0$ for some bipartition $\cA$)
\[ \q(G)>0 \; \mbox{ iff }\; e(A, \bar{A}) < 2 \sqrt{e(A) e(\bar{A})} \;\; \mbox{ for some } A \subseteq V\,.\]
This completes the proof.\end{proof}

\needspace{8\baselineskip}
\section{Constructions}\label{sec.constructions}
\subsection{Constructions related to Theorem~\ref{thm}}\label{sec.example_thm1_1}

The following proposition gives example constructions for $G_\alpha$ and $G_H$ in the `converse' statements (a)(ii) and (b)(ii) of Theorem~\ref{thm}.  
\needspace{5\baselineskip}
\begin{proposition}\label{prop.example} Let $f(\alpha)=\alpha^2\lfloor1/\alpha\rfloor + (1-\alpha \lfloor1/\alpha \rfloor)^2$. For all $0< \alpha <1$ and for $m$ large enough,
    \begin{enumerate}[label=(\alph*)] 
        \item For any graph $H$ with $e(H) \leq \alpha m$, let $G_H$ on $m$ edges be constructed by adding  $m-e(H)$ disjoint edges. Then $H$ is an induced subgraph of $G_H$ and $\q(G_H)\geq 1-\alpha^2-O(1/m)$.
        \item Let $G_{\alpha}$ on $m$ edges be constructed by taking as many cliques on $\lfloor \sqrt{2\alpha m} \rfloor$ vertices as possible, as large as possible a clique on the remaining edges, and disjoint edges for the rest. Then $\q(G_\alpha) = 1-f(\alpha)+O(m^{-1/2})$ and any subgraph $H$ of $G_\alpha$ with $e(H)>\alpha m$ is disconnected.
    \end{enumerate}
\end{proposition}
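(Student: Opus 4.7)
The approach for both parts is to exhibit a specific vertex partition and bound or compute its modularity score. Part~(a) is a short direct computation; part~(b) will additionally invoke Corollary~\ref{cor.0modpart} to identify the modularity-optimal partition exactly.

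For~(a), since the $m-e(H)$ added edges are vertex-disjoint from $V(H)$ and from each other, $H$ sits as an induced subgraph of $G_H$. Taking $\cA$ to be the partition whose parts are $V(H)$ together with the vertex set of each added $K_2$, every edge of $G_H$ lies inside some part, so $q^E_\cA(G_H)=1$. With $\vol(G_H)=2m$, $\vol_{G_H}(V(H))=2e(H)$ and each added part of volume~$2$, the degree tax equals
\[ \frac{(2e(H))^2 + 4(m-e(H))}{4m^2} = \Bigl(\tfrac{e(H)}{m}\Bigr)^{\!2} + \tfrac{m-e(H)}{m^2} \leq \alpha^2 + \tfrac{1}{m}, \]
giving $\q(G_H)\geq q_\cA(G_H)\geq 1-\alpha^2-1/m$.

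For~(b) I first pin down the structure. Setting $k=\lfloor\sqrt{2\alpha m}\rfloor$, one has $\binom{k}{2}=\alpha m-O(\sqrt m)\leq \alpha m$, and then $s:=\lfloor m/\binom{k}{2}\rfloor = \lfloor 1/\alpha\rfloor$ for $m$ large, since $m/\binom{k}{2}=1/\alpha + O(1/\sqrt m)$. The remaining $m - s\binom{k}{2}=(1-\alpha\lfloor 1/\alpha\rfloor)m + O(\sqrt m)$ edges form a maximal clique $K_\ell$ with $\binom{\ell}{2}=(1-\alpha\lfloor 1/\alpha\rfloor)m+O(\sqrt m)$, followed by $O(\sqrt m)$ disjoint $K_2$'s. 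Because $1-\alpha\lfloor 1/\alpha\rfloor<\alpha$ for $\alpha\in(0,1)$, the largest connected component of $G_\alpha$ has $\binom{k}{2}\leq\alpha m$ edges, so any subgraph with more than $\alpha m$ edges cannot lie in a single component and must therefore be disconnected.

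Every component of $G_\alpha$ is a complete graph and so has modularity zero (Proposition~\ref{prop.q0}); hence by Corollary~\ref{cor.0modpart} the modularity-optimal partition $\cA^*$ is the components partition. Its edge contribution is $1$ and its degree tax is
\[ \lfloor 1/\alpha\rfloor\Bigl(\tfrac{k(k-1)}{2m}\Bigr)^{\!2} + \Bigl(\tfrac{\ell(\ell-1)}{2m}\Bigr)^{\!2} + O(m^{-3/2}) = \lfloor 1/\alpha\rfloor\alpha^2 + (1-\alpha\lfloor 1/\alpha\rfloor)^2 + O(m^{-1/2}) = f(\alpha)+O(m^{-1/2}), \]
so $\q(G_\alpha)=1-f(\alpha)+O(m^{-1/2})$. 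The only mildly delicate point is bookkeeping of the $O(\sqrt m)$ slacks -- confirming that $s$ really equals $\lfloor 1/\alpha\rfloor$ (with a separate check when $1/\alpha\in\NN$, where $\binom{k}{2}$ might exactly meet $\alpha m$ and there is no leftover clique at all), and that the $O(\sqrt m)$ trailing $K_2$'s contribute only $O(m^{-3/2})$ to the degree tax and are absorbed by the $O(m^{-1/2})$ error term.
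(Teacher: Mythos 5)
Your proof is correct and follows the paper's approach: for part~(a) the same partition (one part $V(H)$ plus one per added edge), and for part~(b) the components partition justified via Corollary~\ref{cor.0modpart} and the fact that complete graphs have modularity zero. You are in fact a bit more complete than the paper's own proof, which skips the verification that any subgraph on more than $\alpha m$ edges is disconnected (your observation that $1-\alpha\lfloor 1/\alpha\rfloor<\alpha$ so the largest component has fewer than $\alpha m$ edges) and only bounds the degree tax from below, whereas you also track the $O(m^{-3/2})$ contribution of the trailing $K_2$'s needed for the stated two-sided estimate.
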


\begin{proof}[Proof of (a)]\label{proof.GH}
For the graph $G_H$ in part (b), let $\cA$ be the partition with one part $V(H)$ and one part for each added isolated edge.  Then
\[ q^* (G_H) \geq q_\cA(G_H) = 1 - (\vol(H)^2+ 4(m - e(H))/(4m^2) \geq 1 - \alpha^2 +O(1/m),\]
as required.\end{proof}

\begin{proof}[Proof of (b)]
\label{proof.Galpha}
By Corollary~\ref{cor.0modpart}, since complete graphs have modularity value 0, the modularity optimal partition is the connected components partition $\cC$, where we place each disjoint clique (including each disjoint edge) into a separate part. Clearly $q_\cC(G_{\alpha})=1-q^D_\cC(G_{\alpha})$. To calculate the degree tax requires bounds on the number of edges in each clique.
Let $x$ be the number of edges in a clique on $\lfloor  \sqrt{2\alpha m} \rfloor$ vertices.  Then $x < \alpha m$ and
\[ x > \tfrac12 ( \sqrt{2 \alpha m}-1)( \sqrt{2 \alpha m}-2)= \alpha m (1+O(m^{-1/2})). \]
Hence there are $\lfloor \frac{1}{\alpha} \rfloor$ such cliques for $m$ sufficiently large, with a total of  $\lfloor \frac{1}{\alpha} \rfloor \alpha m (1+O(m^{-1/2}))$ edges. Thus the remaining clique has $(1-\alpha\lfloor \frac{1}{\alpha} \rfloor) m (1+O(m^{-1/2}))$ edges.
The degree tax is at least the contribution of these $\lfloor \frac{1}{\alpha} \rfloor+1$ cliques, (i.e. not including the disjoint edges), thus 
\[
q^D_\cC(G_\alpha) \geq \lfloor \frac{1}{\alpha} \rfloor\cdot \alpha^2 (1+O(m^{-1/2}))^2 + (1-\alpha\lfloor \frac{1}{\alpha} \rfloor)^2(1+O(m^{-1/2}))^2 = f(\alpha) + O(m^{-1/2})
\]
as required.\end{proof}

\needspace{5\baselineskip}
\subsection{Constructions related to Proposition~\ref{prop.exptoqub}(ii)} \label{sec.constr-prop1.2(ii)}
\begin{figure}
\centering
\begin{subfigure}[t]{0.45\textwidth}
\centering
\includegraphics[scale=0.5]{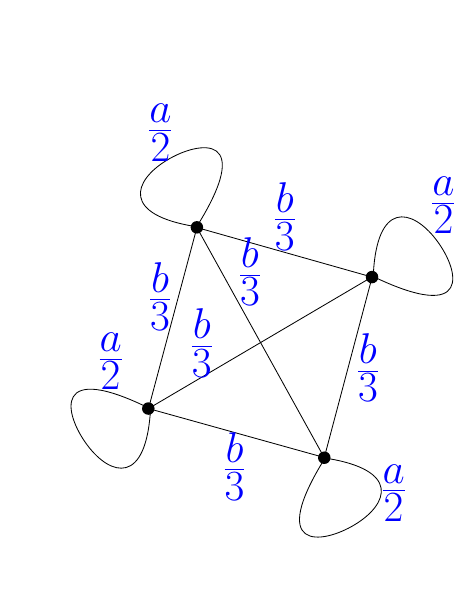}
\caption{The weighted graph constructed by taking the complete graph on four vertices, $K_4$, and adding a loop to each vertex. Note that each weighted degree is~$a + b$.
}\label{subfig.completeloop}
\end{subfigure}%
\hspace{8mm}
~
\begin{subfigure}[t]{0.45\textwidth}
\centering
\includegraphics[scale=0.5]{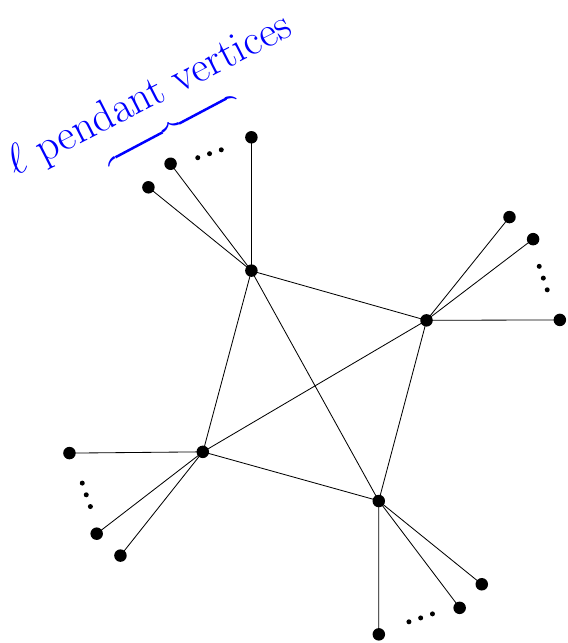}
\caption{The graph constructed by taking the complete graph on four vertices, $K_4$, and adding~$\ell$ pendant vertices to each of its vertices.
}\label{subfig.completehairy}
\end{subfigure}
\caption{Example construction for Proposition~\ref{prop.exptoqub}(ii) in (b). In Lemma~\ref{lem.Kleaves} we show the modularity value and expansion constant $\hh$ of the simple graph in (b) is the same as for the weighted graph with loops in (a) for the correct parameter values.}\label{fig.completeloopANDhairy}
\end{figure}
In this section we will construct a family of graphs and establish their modularity values and the expansion $\hh_H$ for a subgraph $H$. This will prove Proposition~\ref{prop.exptoqub}(ii), and thus show that the inequality in Proposition~\ref{prop.exptoqub}(i) is tight.\\

The constructed graph $G$ will consist of disjoint edges together with the subgraph $H$, where~$H$ is a complete graph with pendant vertices (leaves) - see Figure~\ref{fig.completeloopANDhairy}(b). Loosely, in $H$ the pendant vertices are always grouped together with the adjacent vertex, both for minimal expansion vertex subsets (see Lemma~\ref{lem.hdeltabothconn}) and for maximal modularity partitions (see Lemma~\ref{lem.modpartsconn}).
This will mean that it is enough to establish expansion properties for~$H'$ a weighted complete graph with loops, and modularity values of $G'$ where~$H$ is replaced by~$H'$.\\

We first show that we may construct a weighted graph with loops $G_w$ fulfilling the claims above, and then show that we may approximate this graph by a simple graph. See Figure~\ref{fig.completeloopANDhairy}(a) for an illustration of $H_w$ which is used to construct $G_w$.

\needspace{5\baselineskip}
\begin{lemma}\label{lem.Kloops} 
For $k \geq 2$ and $a,b>0$, let $H_w=H_w(a,b,k)$ be the weighted graph with loops constructed by taking the complete graph on $k$ vertices with all $\binom{k}{2}$ edges of weight $b/(k-1)$ and adding  a loop of weight $a/2$ at each vertex. Given also $0 < \alpha < 1$, construct $G_w=G_w(H_w, \alpha)$ by taking the disjoint union of~$H_w$ and $\lfloor e(H_w)(1-\alpha)/\alpha \rfloor$ disjoint edges each of weight~1. Then 
\begin{enumerate}[label=(\roman*)]  
    \item 
    \[\hh_{H_w} = \frac{b}{a+b}(1+\frac{1}{k-1}) ; \] 
    \item for any $\eps>0$ we may take $k$ large enough such that \[ \q(G_w) \geq 1 - \alpha\min\{\hh_{H_w} , \alpha\} - \eps.\]
\end{enumerate}
\end{lemma}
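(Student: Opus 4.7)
The plan for (i) is a symmetric calculation: every vertex of $H_w$ has weighted degree $a+b$ (loops double-counted) and every non-loop edge has weight $b/(k-1)$, so for any nonempty $A\subsetneq V(H_w)$ with $|A|=j$ we have $e(A,\bar A) = j(k-j)\,b/(k-1)$, $\vol(A)\vol(\bar A) = j(k-j)(a+b)^2$ and $\vol(H_w) = k(a+b)$. The factor $j(k-j)$ cancels from the ratio defining $\hh_{H_w}(A)$, which therefore equals $kb/((k-1)(a+b))$ independently of $j$; this is both the claimed value and, being constant in $j$, the minimum over $A$.

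For (ii) the plan is to lower-bound $\q(G_w)$ by the score of two explicit candidate partitions, one of which will work in each of the regimes $\hh_{H_w}\ge\alpha$ and $\hh_{H_w}<\alpha$. Write $m_w = e(H_w) = k(a+b)/2$, $N = \lfloor m_w(1-\alpha)/\alpha\rfloor$, $m = e(G_w) = m_w+N$ and $\alpha' = m_w/m = \alpha + O(1/m)$. Let $\cA_1$ put $V(H_w)$ in one part and each of the $N$ unit-weight disjoint edges in its own part, and let $\cA_2$ refine $\cA_1$ by further splitting $V(H_w)$ into $k$ singletons. Expanding $q^E$ and $q^D$ directly, and using (i) together with the identity $k(a+b)=2\alpha' m$, one finds
\[
q_{\cA_1}(G_w) = 1 - \alpha'^2 - N/m^2
\qquad\text{and}\qquad
q_{\cA_2}(G_w) = 1 - \tfrac{k-1}{k}\,\alpha'\,\hh_{H_w} - \alpha'^2/k - N/m^2.
\]
If $\hh_{H_w}\ge\alpha$ the $\cA_1$ bound gives $\q(G_w) \ge 1 - \alpha^2 + O(1/m) = 1 - \alpha\min\{\hh_{H_w},\alpha\} + O(1/m)$, and if $\hh_{H_w}<\alpha$ the $\cA_2$ bound gives $\q(G_w) \ge 1 - \alpha'\,\hh_{H_w} - O(1/k) = 1 - \alpha\,\hh_{H_w} + O(1/k + 1/m)$. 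Since $m\ge m_w\to\infty$ as $k\to\infty$ for fixed $a,b,\alpha$, we can choose $k$ large enough that the total error is at most $\eps$.

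Nothing here is technically deep; the main obstacle is really to see \emph{which} partition to use in each regime. The choice of $\cA_2$ (the fully atomised partition on $V(H_w)$ rather than a balanced bipartition) is guided by the symmetry established in (i): every bipartition of $V(H_w)$ has the same normalised expansion, so subdividing $H_w$ as finely as possible maximises the reduction in degree tax per unit of edge-contribution lost, pushing $q_{\cA_2}(G_w)$ towards the target value $1-\alpha\,\hh_{H_w}$. The remaining work is to keep track of the rounding in $N$ and to confirm that all the $O(\cdot)$ terms are uniform in $k$ when $a,b,\alpha$ are held fixed.
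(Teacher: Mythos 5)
Your proposal is correct. Part (i) is the same symmetry calculation the paper does: $\hh_{H_w}(U_j)$ is independent of $j$ because the $j(k-j)$ factors cancel between the cut weight and the volume product.

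For part (ii) you take a mildly different and in fact more streamlined route. The paper fixes that each disjoint edge is its own part, writes $\q(G_w) = (1-\alpha')(1-1/m) + \alpha'\max_\cB \sum_B \frac{\vol(B)}{\vol(H_w)} f(B)$ with $f(B) = \frac{2e(B)}{\vol(B)}-\frac{\vol(B)}{\vol(G_w)}$, shows by symmetry that $f(B)=f(U_{|B|})$, observes the sum is a weighted average and hence maximised at $\max_j f(U_j)$, and finally evaluates at $j=1$ and $j=k$. You skip the optimisation argument entirely: since only a lower bound on $\q(G_w)$ is claimed, it suffices to evaluate the modularity score of the two concrete partitions $\cA_1$ (all of $V(H_w)$ as one part) and $\cA_2$ ($V(H_w)$ atomised into singletons), which correspond exactly to the paper's $j=k$ and $j=1$ endpoints. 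Your formulas $q_{\cA_1} = 1-\alpha'^2-N/m^2$ and $q_{\cA_2} = 1 - \tfrac{k-1}{k}\alpha'\hh_{H_w} - \alpha'^2/k - N/m^2$ are correct (using $k(a+b)=2\alpha' m$ and the value of $\hh_{H_w}$ from (i)), and the regime split with error $O(1/k+1/m_w)$ gives the claimed bound for $k$ large. What the paper's weighted-average step buys that you do not need here is that $\cA_1$ and $\cA_2$ are in fact optimal among partitions refining the disjoint-edge split, i.e.\ the paper's argument pins down the exact value of $\q(G_w)$, not just a lower bound; for the lemma as stated your shortcut is entirely sufficient.
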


Observe that the graph $H_w$ is regular with weighted degree $a+b$ at each vertex, and thus $\vol(H_w)=k(a+b)$.

\begin{proof}[Proof of (i)]\label{proof.Kloops} 
Let $U_j$ be a set of $1\leq j<k$ vertices of $H_w$ (by symmetry all such sets of size~$j$ are equivalent), and note that
\begin{equation}\label{eq.allsetsminimisehh}
\hh_{H_w}(U_j) =  \frac{e(U_j, \bar{U_j}) \vol(H_w)}{\vol(U_j)\vol(\bar{U_j})} = \frac{j(k-j) b (k-1)^{-1} \cdot k (a+b)}{j (a+b) \cdot (k-j) (a+b) }
= \frac{b}{a+b}\big(1+\frac{1}{k-1}\big)
\end{equation}
which is independent of $j$. But then
\[ \hh_{H_w} = \min_j \hh_{H_w}(U_j) = \frac{b}{a+b}\big(1+\frac{1}{k-1}\big)\]
as required.\end{proof}

\begin{proof}[Proof of (ii)]
Let $m=e(G_w)$ and let $\alpha' = e(H_w)/e(G_w)$. Note that $m =e(H_w)+\lfloor e(H_w)(\alpha^{-1}-1) \rfloor$ so $\alpha \leq \alpha' < \alpha(1+1/m)$. The modularity optimal partitions of $G_w$ will place each disjoint edge in its own part and hence 
\begin{eqnarray}
 \q(G_w) 
 & = & (1 - \alpha')(1-1/m) + \max_{\cB} \sum_{B \in \cB}  \frac{\vol(B)}{\vol(G_w)} \Big(\frac{2e(B)}{\vol(B)}-\frac{\vol(B)}{\vol(G_w)} \Big) \notag \\
 & = & (1 - \alpha')(1-1/m) + \alpha' \max_{\cB} \sum_{B \in \cB} \frac{\vol(B)}{\vol(H_w)}  f(B) \label{eq.whichB}
\end{eqnarray}
where the max is over partitions $\cB$ of $V(H_w)$ and \[f(B)= \frac{2e(B)}{\vol(B)}-\frac{\vol(B)}{\vol(G_w)}.\]
Again, let $U_j$ be a set of $j$ vertices of $H_{w}$.  We may calculate that $e(U_j) = (\binom{j}{2}b/(k-1) + ja/2)$ and $\vol(U_j)=j(a+b)$. Also, that $2m = \vol(H_w)/\alpha' = k (a+b)/\alpha'$. Hence
\[
f(U_j) = \frac{j(j-1)b/(k-1) + ja}{j(a+b)} - \frac{\alpha' j(a+b)}{k(a+b)} = \frac{(j-1)b/(k-1)  +  a}{a+b} - \frac{\alpha' j}{k}.
\]
Rearranging, and recalling the expression for $\hh_{H_w}$ shown in part (i), we get
\[
f(U_j) = \frac{j}{k} \Big( 
\frac{bk}{(k-1)(a+b)}  - \alpha'  
\Big) +  \frac{a}{a+b} -  \frac{b}{(k-1)(a+b)} = \frac{j}{k}(\hh_{H_w}-\alpha') + 1 - \hh_{H_w},
\]
where the second equality follows by adding $b/(a+b)$ to the second term and subtracting $b/(a+b)$ from the third term. Thus the $j$ which maximises $f(U_j)$ depends on the relative values of $\hh_{H_w}$ and~$\alpha'$ :
\[ \max_j f(U_j) = 
    \begin{cases} 
        f(U_1)=1 - \hh_{H_w} + \tfrac{1}{k}(\hh_{H_w}-\alpha') &\mbox{ if $\hh_{H_w} < \alpha'$ } \\  
        f(U_k)=1 - \alpha' &\mbox{ if $\hh_{H_w} \geq \alpha'$ .}
    \end{cases} 
    \]
Since the sum in~\eqref{eq.whichB}
 is a weighted average of values $f(B)$, its maximum value is at most $\max_j f(U_j)$, and clearly $f(U_1)$ and $f(U_m)$ can be achieved.
Hence
  \[\max_\cB \sum_{B \in \cB}  \frac{\vol(B)}{\vol(H_w)} f(B) =
  \begin{cases} 
     1- \hh_{H_w} + \frac1{k}(\hh_{H_w} - \alpha') &\mbox{ if $\hh_{H_w} < \alpha'$ } \\   
      1-\alpha' &\mbox{ if $\hh_{H_w} \geq  \alpha'$ }
  \end{cases} \]
and so by~(\ref{eq.whichB})
\[ \q(G_w) = 
 \begin{cases} 
     (1 - \alpha')(1-1/m)+\alpha' \big(1- \hh_{H_w} + \frac1{k}(\hh_{H_w} - \alpha')\big) &\mbox{ if $\hh_{H_w} < \alpha'$ } \\   
     (1 - \alpha')(1-1/m) + \alpha' ( 1-\alpha') &\mbox{ if $\hh_{H_w} \geq \alpha'$ .}
  \end{cases} \]
By taking $k$ large enough (which for fixed $a,b$ implies $m$ large enough) 
\[ \q(G_w) \geq 1 - \alpha\min\{\hh_{H_w}, \alpha\} - \eps\]
as required.\end{proof}

\needspace{5\baselineskip}
\begin{lemma}\label{lem.Kleaves}
For $k \geq 2$ and $\ell \geq 1$, let $H=H(k,\ell)$ be the simple graph constructed by taking the complete graph on $k$ vertices with and adding $\ell$ leaves at each vertex. Construct $G=G(H, \alpha)$ by taking the disjoint union of $H$ and $\lfloor e(H)(1-\alpha)/\alpha 
\rfloor$ disjoint edges. Let $H_w=H_w(2\ell, k-1 ,k)$ and $G_w=G_w(H_w, \alpha)$ be the weighted graphs with loops as defined in Lemma~\ref{lem.Kloops}. Then 
    \begin{enumerate}[label=(\roman*)] 
        \item $\q(G) = \q(G_w)$ and
        \item 
        $\hh_{H} = \hh_{H_w}$. 
    \end{enumerate}
\end{lemma}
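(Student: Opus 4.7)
The plan is to build a correspondence between the ``clusters'' of $H$ (each cluster being one clique vertex together with its $\ell$ pendant leaves) and the vertices of $H_w$, and then use it to transfer both $\hh$-values and modularity scores between $G$ and $G_w$. The parameter choice $a = 2\ell$, $b = k-1$ is designed precisely so that local quantities agree: each cluster of $H$ has volume $(k-1+\ell)+\ell = a+b$, matching the weighted degree of a vertex of $H_w$; the $\ell$ leaf edges inside a cluster contribute $\ell$ to $e_H(\cdot)$, matching the contribution $a/2 = \ell$ of a loop in $H_w$; and between any two distinct clusters there is exactly one clique edge, matching $b/(k-1) = 1$ in $H_w$. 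A quick count gives $e(H) = \binom{k}{2} + k\ell = e(H_w)$, so $G$ and $G_w$ use the same number of padding edges and $e(G) = e(G_w)$.

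For (ii), given any $U \subseteq V(H_w)$ let $\widetilde{U} \subseteq V(H)$ be the union of the clusters indexed by $U$. The identifications above give $\vol_H(\widetilde{U}) = \vol_{H_w}(U)$ and $e_H(\widetilde{U}, V(H) \setminus \widetilde{U}) = e_{H_w}(U, V(H_w) \setminus U)$, hence $\hh_H(\widetilde{U}) = \hh_{H_w}(U)$, giving $\hh_H \le \hh_{H_w}$. For the reverse inequality I would show that some witness $U^*$ of $\hh_H$ may be taken to be a union of whole clusters. The argument is a local exchange: if $U^*$ contains a clique vertex $v$ but only some of $v$'s leaves, then swapping a stray leaf of $v$ to the side containing~$v$ preserves both $\vol_H(U^*)$ and $\vol_H(\overline{U^*})$ while strictly decreasing $e_H(U^*, \overline{U^*})$ by one. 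Hence $\hh_H(U^*)$ does not increase under this operation, and after finitely many such swaps we obtain a cluster-aligned minimizer; the correspondence then gives $\hh_{H_w} \le \hh_H$.

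For (i), I would first invoke Corollary~\ref{cor.0modpart} to place each padding edge into its own part in any optimal partition of $G$ (and similarly in $G_w$), so that the contributions from the padding are identical. Next I would show that in an optimal partition of~$G$ each leaf of $H$ shares a part with its adjacent clique vertex. This is another local exchange argument: if a leaf~$u$ of clique vertex $v$ sits in a part $A$ not containing $v$, then moving $u$ into the part $A'$ containing $v$ raises the edge contribution by $1/m$ and changes the degree tax by at most $O(1/m^2)$ in magnitude, and a careful computation (or a comparison with an even simpler move to a singleton-part) shows the change is strictly positive at an optimum; so any maximizer can be taken to be cluster-aligned. Modularity-optimal partitions of~$G$ restricted to $V(H)$ are thus unions of clusters, and by the volume/edge correspondence they map bijectively to partitions of $V(H_w)$ realising the same modularity score on $G_w$. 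Taking suprema in both directions yields $\q(G) = \q(G_w)$.

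The main obstacle is the two structural claims I have highlighted: that minimizers of~$\hh_H$ may be chosen cluster-aligned, and that modularity-optimal partitions of $G$ keep each leaf with its clique vertex. Once these are established by the local exchange arguments sketched above, the rest of the proof is just the explicit parameter-matching computation, which is immediate from the setup.
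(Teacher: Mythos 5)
Your overall plan — set up the cluster correspondence between $H$ and $H_w$ and then show that both $\hh_H$-minimisers and modularity-optimal partitions of $G$ can be taken cluster-aligned (pendant-consistent) — matches the paper's strategy. However, the two local-exchange arguments you rely on contain concrete errors.

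For (ii), you claim that moving a stray leaf $u$ of $v$ from $\overline{U^*}$ into the side $U^*$ containing $v$ ``preserves both $\vol_H(U^*)$ and $\vol_H(\overline{U^*})$ while strictly decreasing $e_H(U^*,\overline{U^*})$''. The volumes are \emph{not} preserved: $u$ has degree $1$, so $\vol(U^*)$ increases by $1$ and $\vol(\overline{U^*})$ decreases by $1$. The quantity $\hh_H(U^*) = e(U^*,\overline{U^*}) \vol(H) / (\vol(U^*)\vol(\overline{U^*}))$ does in fact not increase under the move, but showing this requires analysing the change in the denominator (using $e(U^*,\overline{U^*}) \le \min\{\vol(U^*),\vol(\overline{U^*})\}$ and ruling out the degenerate case where $\overline{U^*}$ is a single leaf); it is not immediate. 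For (i), your claim that moving a leaf into its clique vertex's part ``changes the degree tax by at most $O(1/m^2)$'' is off by a factor of $m$: the change is $\tfrac{1}{2m^2}\bigl(\vol(A')-\vol(A)+1\bigr)$, which is $\Theta(1/m)$ when $\vol(A')=\Theta(m)$ — the same order as the edge-contribution gain of $1/m$. So the gain is not automatic; one must check the signs carefully (the inequality does hold, with slack only $1/(2m^2)$, but this is not a soft $O(\cdot)$ argument).

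The paper sidesteps both of these calculations by appealing to structural results. For (i), Lemma~\ref{lem.modpartsconn} (every part of a modularity-optimal partition induces a connected subgraph with at least one edge) immediately forces every leaf into the same part as its clique vertex, no perturbation calculation needed. For (ii), the paper first observes that $\hh_H(\{u\})>1>\hh_H$, so a minimiser $U_2$ has $|U_2|\ge 2$ and $|V(H)\setminus U_2|\ge 2$; then Lemma~\ref{lem.hdeltabothconn} (both sides of a minimising cut are connected) forces pendant-consistency of $U_2$ for the same reason. You should either replace your exchange arguments with these structural lemmas, or supply the delicate sign calculations you are currently asserting without proof.
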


To prove Lemma~\ref{lem.Kleaves} we need results on which vertex subsets $U$ minimise $\hh(U)$ in $G$ and~$G_w$ and which vertex partitions $\cA$ maximise $q_\cA(G)$ and $q_\cA(G_w)$. 
The following result is elementary 
but for completeness we provide a proof.
\needspace{5\baselineskip}
\begin{lemma}\label{lem.hdeltabothconn}
Let $H$ be a connected graph and let $\emptyset \neq U^* \subsetneq V(H)$ be such that $\hh_H=\hh(U^*)$. Then $H[U^*]$ and $H[V \backslash U^*]$ are connected.
\end{lemma}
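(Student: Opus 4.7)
The plan is to argue by contradiction: I will suppose $H[U^*]$ is disconnected, write $U^* = U_1 \sqcup U_2$ as a nontrivial split with no edges between $U_1$ and $U_2$ inside $H$, and show that one of $\hh_H(U_1)$, $\hh_H(U_2)$ is strictly smaller than $\hh_H(U^*)$. The case that $H[V \setminus U^*]$ is disconnected then follows immediately by the symmetry $\hh_H(U) = \hh_H(V \setminus U)$ (evident from the definition), applied to $V \setminus U^*$, which is also a witness to the minimum.

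First I will record some sign facts. Writing $v_i = \vol_H(U_i)$, $v = \vol(H)$, and $a = e_H(U_1, V \setminus U^*)$, $b = e_H(U_2, V \setminus U^*)$, the assumption that $U_1, U_2$ are nonempty in the connected graph $H$ (which therefore has no isolated vertices among these vertices) gives $v_1, v_2 > 0$; the assumption that $e_H(U_1, U_2) = 0$ together with $H$ connected forces $a > 0$ and $b > 0$. Note also that $e_H(U^*, V \setminus U^*) = a + b$ because there are no edges between $U_1$ and $U_2$.

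Next I will compute
\[
\hh_H(U_1) = \frac{a\, v}{v_1(v-v_1)}, \qquad \hh_H(U_2) = \frac{b\, v}{v_2(v-v_2)}, \qquad \hh_H(U^*) = \frac{(a+b)\, v}{(v_1+v_2)(v-v_1-v_2)},
\]
and claim that $\min\{\hh_H(U_1), \hh_H(U_2)\} < \hh_H(U^*)$. If both inequalities $\hh_H(U_i) \ge \hh_H(U^*)$ held, adding the two numerator-cleared forms and cancelling $a+b > 0$ would give
\[
(v_1+v_2)(v-v_1-v_2) \;\ge\; v_1(v-v_1) + v_2(v-v_2).
\]
The elementary identity
\[
v_1(v-v_1) + v_2(v-v_2) \,-\, (v_1+v_2)(v-v_1-v_2) \;=\; 2 v_1 v_2
\]
then supplies the contradiction, since $v_1 v_2 > 0$.

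There is no real obstacle here beyond getting the strictness right; the only subtlety is verifying that $a, b, v_1, v_2$ are all strictly positive, which uses both connectedness of $H$ and the fact that $U_1, U_2$ are a genuine disconnection of $H[U^*]$. Once these positivity facts are in place the contradiction against $U^*$ being a minimiser of $\hh_H$ is immediate, and the statement for $H[V \setminus U^*]$ follows by the symmetry remark above.
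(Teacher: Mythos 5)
Your proof is correct, and it takes a route whose mechanics differ from the paper's. Both arguments proceed by contradiction from a disconnection $U = U_1 \sqcup U_2$ with $e(U_1, U_2) = 0$ (and both exploit the symmetry $\hh_H(U) = \hh_H(V \setminus U)$ to reduce to a single case), and both ultimately need $\vol(U_1), \vol(U_2) > 0$ for strictness. However, the paper expresses $\hh_H(U)/\vol(H)$ as a weighted average of the two auxiliary ratios $e(U_i, V \setminus U_i)/\bigl(\vol(U_i)\vol(V \setminus U)\bigr)$, so that at least one of them is $\leq \hh_H(U)/\vol(H)$; it then passes from that ratio to $\hh_H(U_i)/\vol(H)$ by replacing $\vol(V \setminus U)$ in the denominator with the strictly larger $\vol(V \setminus U_i)$, giving a strict decrease. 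You instead compare $\hh_H(U_1)$ and $\hh_H(U_2)$ against $\hh_H(U^*)$ directly: clear denominators in the two inequalities $\hh_H(U_i) \geq \hh_H(U^*)$, add them, cancel the common factor $a+b>0$, and derive a contradiction from the identity $v_1(v-v_1)+v_2(v-v_2)-(v_1+v_2)(v-v_1-v_2) = 2 v_1 v_2$. The paper's averaging step is a touch slicker and avoids the quadratic bookkeeping; your route is more computational but also more transparent about exactly where the positive slack $2 v_1 v_2 > 0$ comes from. Both are correct, complete, and of comparable length.
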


\begin{proof}
Suppose not and let $U^*$ be such that $\hh_H=\hh_H(U^*)$ but  at least one of $H[U^*]$ and $H[V \backslash U^*]$ is disconnected. Choose $U \in \{U^*, V \backslash U^*\}$ with $G[U]$ disconnected. We will show there exists $\emptyset \neq U' \subsetneq U$ such that $\hh(U') < \hh(U) = \hh(U^*)$ which will prove the result.\\

As $H[U]$ is disconnected, it is the disjoint union of $H[U_1]$ and $H[U_2]$ for some $\emptyset \neq U_1\subsetneq U$ and~$U_2=U \backslash U_1$. Note that since $e(U_1, U_2)=0$ we have
\[ e(U, V \backslash U) = e(U_1, V \backslash U_1) + e(U_2, V \backslash U_2)
\]
and thus
\[
\frac{\hh(U)}{\vol(H)}
= \frac{\vol(U_1)}{\vol(U_1)\!+\!\vol(U_2)}\underbrace{ \frac{e(U_1, V\backslash U_1)}{\vol(U_1)\vol(V\backslash U)} }_{\lozenge} + \frac{\vol(U_2)}{\vol(U_1)\!+\!\vol(U_2)}\underbrace{  \frac{e(U_2, V\backslash U_2)}{\vol(U_2)\vol(V\backslash U)}}_{\blacklozenge}.
\]

Since $\hh(U)/\vol(H)$ is a weighted average of terms ($\lozenge$) and ($\blacklozenge$), one of the terms must be at most~$\hh(U)/\vol(H)$: w.l.o.g.\ this is ($\lozenge$) the term involving $U_1$. Hence we may assume
\begin{equation}\label{eq.wlogu1small}
\frac{\hh(U)}{\vol(H)} \geq \frac{e(U_1, V\backslash U_1)}{\vol(U_1)\vol(V\backslash U)}.
\end{equation}

However, $H$ has no isolated vertices so $\vol(V\backslash U)  <  \vol(V\backslash U_1)$,
and thus in particular
\[
\frac{\hh(U_1)}{\vol(H)} 
= \frac{e(U_1, V\backslash U_1)}{\vol(U_1)\vol(V\backslash U_1)} < \frac{e(U_1, V\backslash U_1)}{\vol(U_1)\vol(V\backslash U)} .
\]
Hence by~\eqref{eq.wlogu1small} we have $\hh(U_1)< \hh(U)$, as required.\end{proof}

The following lemma will imply that for the graph $G$ in Lemma~\ref{lem.Kleaves} - see also Figure~\ref{fig.completeloopANDhairy}(b) - for any modularity optimal partition each pendant leaf is always together in the same part as the vertex in the complete graph which they are incident with.
%
%
\begin{lemma}[\cite{nphard,w1hard}]\label{lem.modpartsconn}
Suppose that $G$ is a graph that contains no isolated vertices and no loops.
If $\cA$ is a partition of $V(G)$ 
such that $q_\cA(G) = \q(G)$ then, for every $A \in \cA$, $G[A]$ is
a connected subgraph of $G$ with at least one edge.
\end{lemma}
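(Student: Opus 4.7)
The plan is to show two separate properties of any optimal partition $\cA$: (1) each part induces a connected subgraph of $G$, and (2) no part is a singleton. Together these suffice, because a connected graph on $\geq 2$ vertices has at least one edge.

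For (1), I would use the classical split-and-improve argument. Suppose some $A \in \cA$ has $G[A]$ disconnected, so there exist nonempty $A_1, A_2$ partitioning $A$ with $e_G(A_1,A_2)=0$. Replace $A$ in $\cA$ by the two parts $A_1, A_2$ to get a refinement $\cA'$. The edge contribution is unchanged since $e(A) = e(A_1) + e(A_2)$, while the degree tax contribution from $A$ changes from $\vol(A)^2/(4m^2)$ to $(\vol(A_1)^2 + \vol(A_2)^2)/(4m^2)$. The difference is $2\vol(A_1)\vol(A_2)/(4m^2)$, which is strictly positive since $G$ has no isolated vertices (so $\vol(A_1), \vol(A_2) \geq 1$). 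Therefore $q_{\cA'}(G) > q_{\cA}(G)$, contradicting optimality.

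For (2), suppose some singleton $\{v\}$ is a part of $\cA$. For any other part $B \in \cA$, consider the partition $\cA_B$ obtained by merging $\{v\}$ into $B$. A direct computation gives
\[
q_{\cA_B}(G) - q_{\cA}(G) \;=\; \frac{e_G(\{v\},B)}{m} - \frac{d_v \vol(B)}{2m^2}.
\]
By optimality of $\cA$, this is $\leq 0$ for every part $B$, so $e_G(\{v\},B) \leq d_v \vol(B)/(2m)$. Now sum over all parts $B$ other than $\{v\}$: because $G$ has no loops, every edge incident to $v$ has its other endpoint in some such $B$, so the left-hand side sums to $d_v$; the right-hand side sums to $d_v(2m - d_v)/(2m) = d_v - d_v^2/(2m)$. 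This forces $d_v^2/(2m) \leq 0$, hence $d_v = 0$, contradicting the no-isolated-vertex hypothesis.

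The main obstacle, such as it is, is being careful about which hypotheses are actually used where. Part (1) relies on no isolated vertices (to conclude the degree tax strictly drops rather than just weakly drops), while part (2) uses both no isolated vertices (to get $d_v \geq 1$) and no loops (so that $\sum_B e_G(\{v\},B) = d_v$ exactly, with no contribution from a loop staying inside $\{v\}$). Neither step requires any global structural information beyond these two assumptions, so the proof is essentially a short local perturbation argument in both cases.
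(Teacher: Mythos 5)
Your proof is correct, and its two halves are the right decomposition. Note that the paper does not prove this lemma itself: it cites it to \cite{nphard,w1hard} and only informally sketches the connectedness half in the introduction (the split-and-improve argument you give in part (1), which matches that sketch exactly). Your part (2) — ruling out singleton parts — is the half that the paper never writes out, and your averaging argument for it is tidy: from $e_G(\{v\},B) \leq d_v \vol(B)/(2m)$ for every other part $B$, summing and using $\sum_B e_G(\{v\},B) = d_v$ (loops absent) and $\sum_B \vol(B) = 2m - d_v$ forces $d_v^2 \leq 0$. You also correctly isolate where each hypothesis enters: no isolated vertices makes the degree-tax drop in (1) strict and gives $d_v \geq 1$ in (2), while no loops is only needed in (2) so that every edge at $v$ is counted once on the left-hand side (and so that $e(\{v\})=0$ in the edge-contribution bookkeeping). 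The one thing worth spelling out is why there exists a part $B \neq \{v\}$ to merge into: since $v$ is not isolated it has a neighbour $u \neq v$, and $u$ lies in some part other than $\{v\}$.
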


We are now ready to prove that the graph illustrated in Figure~\ref{fig.completeloopANDhairy}(b) has the modularity claimed.

\begin{proof}[Proof of Lemma~\ref{lem.Kleaves}(i)]\label{proof.Kleaves}
Given a graph $J$ let $\widetilde{J}$ be the graph obtained as follows: for each vertex~$v$ with degree at least 2 and with $\ell \geq 1$ adjacent leaves, delete the leaves and add a loop at $v$ with weight $\ell$; and for each isolated edge $uv$ delete the edge and add a loop of weight 1 at $u$ (say).  Each vertex partition $\cA$ of $J$ induces a partition $\widetilde{\cA}$ of $\widetilde{J}$, where~$\widetilde{\cA}$ is obtained from $\cA$ by deleting vertices in $V(J)\backslash V(\widetilde{J})$.\\

Recall that the graph $G=G(H,\alpha)$ consists of disjoint edges together with the graph $H(k, \ell)$ (the $k$-clique with $\ell$ pendant leaves at each of its $k$ vertices). Thus $\widetilde{G}$ consists of disjoint loops and a $k$-clique with a weight $\ell$ loop at each vertex and weight 1 edges in the clique, so $\widetilde{G}$ is the graph $G_w$.\\ 

We say that a partition $\cA$ for a graph $J$ is \emph{pendant-consistent} if for each pendant vertex $v$ with pendant edge $uv$ there is a part $A \in \cA$ with $u,v \in A$.  It is easy to see that if $\cA$ is pendant consistent for $J$ then $q_\cA(J)=q_{\widetilde{\cA}}(\widetilde{J})$.  
But by Lemma~\ref{lem.modpartsconn}, any modularity optimal partition for $J$ is pendant consistent, so $\q(J)=\q(\widetilde{J})$.  Hence $\q(G)=\q(\widetilde{G})= \q(G_w)$, as required.\end{proof}

\begin{proof}[Proof of Lemma~\ref{lem.Kleaves}(ii)]\label{proof.Kleaves2}
We first observe that for any set $\{u\}$ consisting of a single vertex $u \in V(H)$ we have $\hh_H(\{u\})>1$. To see this, note that since $H$ has no loops, $e(\{u\}, V(H)\backslash \{u\}) = \deg(u) = \vol(\{u\})$, and hence \[ \hh_H(\{ u \}) = \frac{\vol(H)}{\vol(V(H)\backslash \{u\})} = \frac{\vol(H)}{\vol(H)-\deg(u)}>1
\] as by construction $H$ has no isolated vertices.\\

Also, note that $\hh_H <1$. To see this, for example let $U$ consist of $1 \leq j \leq k-1$ vertices from the $k$-clique together with their $j \ell$ adjacent leaves: then
\[
\hh_H(U) = \frac{j (k-j) \cdot k (2\ell+k-1)}{j (2\ell+k-1) \cdot (k-j) (2\ell+k-1)} = \frac{k}{2\ell+k-1} <1.
\]

Let $U_2$ minimise $\hh_H(U)$ over $\emptyset \neq U \subsetneq V(H)$; and note that by the above we must have $2 \leq |U_2| \leq n-2$, where $n=|V(H)|=k(\ell+1)$.

\begin{claim}\label{claim.atleasttwo} 
The partition $\{ U_2, V(H)\backslash U_2\}$ is pendant-consistent.\end{claim}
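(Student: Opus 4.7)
The plan is to argue by contradiction. Suppose the partition $\{U_2,\, V(H)\setminus U_2\}$ is not pendant-consistent, so that some leaf $v$ of $H$ and its unique neighbour $u$ in the underlying $k$-clique lie in different parts. Since $\hh_H(U) = \hh_H(V(H)\setminus U)$, by swapping the two parts if necessary I may assume $u \in U_2$ and $v \notin U_2$. I will then exhibit a set $U' \subsetneq V(H)$ with $\hh_H(U') < \hh_H(U_2)$, contradicting the choice of $U_2$.

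Take $U' := U_2 \cup \{v\}$ and write $e = e_H(U_2, V(H)\setminus U_2)$, $a = \vol_H(U_2)$ and $b = \vol_H(V(H)\setminus U_2)$. Because $v$ is a leaf with only neighbour $u \in U_2$, moving $v$ across the cut changes these three quantities to $e-1$, $a+1$ and $b-1$ respectively. The factor $\vol(H)$ in the definition of $\hh_H$ cancels, and a short rearrangement shows that $\hh_H(U') < \hh_H(U_2)$ is equivalent to the algebraic inequality
\[ e(a+1) \;<\; b(a+e). \]

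To finish I would verify this via two elementary estimates. First, $b \geq e$, because every one of the $e$ crossing edges contributes at least one to $\vol_H(V(H)\setminus U_2)$; hence $ab \geq ae$. Second, $b \geq 2$, because $|V(H)\setminus U_2| \geq 2$ (by the bounds $2 \leq |U_2| \leq n-2$ recorded just above) and $H$ has no isolated vertex; combined with $e \geq 1$ (since $H$ is connected), this gives $eb > e$. Adding the two estimates yields $b(a+e) = ab + eb > ae + e = e(a+1)$, as required.

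I do not expect any serious obstacle here: the whole claim reduces to a one-line computation showing that, for a leaf $v$ with neighbour $u$, placing $v$ on the same side as $u$ strictly decreases $\hh_H$. The only real subtlety is obtaining \emph{strict} (rather than weak) inequality, and this is precisely what the bound $|V(H)\setminus U_2| \geq 2$, noted in the paragraph immediately preceding the claim, is there to guarantee.
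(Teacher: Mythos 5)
Your proof is correct, and it takes a genuinely different route from the paper. The paper's argument is purely structural: it invokes Lemma~\ref{lem.hdeltabothconn} (both sides of a minimizing cut must induce connected subgraphs) and observes that if a leaf is separated from its unique neighbour, then the side containing the leaf — which has at least two vertices by the bound $2 \le |U_2| \le n-2$ — would be disconnected. You instead avoid Lemma~\ref{lem.hdeltabothconn} entirely and show by direct computation that moving the stray leaf onto the same side as its neighbour strictly lowers $\hh_H$, contradicting minimality of $U_2$. I verified the algebra: with $e = e_H(U_2,\bar{U_2})$, $a = \vol(U_2)$, $b = \vol(\bar{U_2})$, the new quantities are indeed $e-1$, $a+1$, $b-1$, cross-multiplying (valid since $b \ge 2$ ensures positive denominators) reduces $\hh_H(U') < \hh_H(U_2)$ to $e(a+1) < b(a+e)$, and your two estimates $ab \ge ae$ (from $b \ge e$) and $eb > e$ (from $b \ge 2$, $e \ge 1$) give this with the needed strictness. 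Your argument is more elementary and self-contained; the paper's is shorter and reuses a lemma it already proved for other purposes. Both hinge on the same prerequisite $2 \le |U_2| \le n-2$, yours to force $b \ge 2$, the paper's to find a second vertex in the leaf's part.
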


\begin{proof}[Proof of Claim~\ref{claim.atleasttwo}] Suppose for a contradiction that there exists a leaf $u$ and pendant edge $uv$ such that $u \in U_2$ and $v\in V(H)\backslash U_2$ or vice-versa. W.l.o.g. we may assume $u \in U_2$ and $v\in V(H)\backslash U_2$. Recall by Lemma~\ref{lem.hdeltabothconn} both $H[U_2]$ and $H[V(H)\backslash U_2]$ are connected. Since $|U_2| \geq 2$ there must be another vertex $w$ with $w\in U_2$. However, since $v \notin U_2$ and $u$ is a leaf it must be that $H[U_2]$ is disconnected and we have proven the claim. 
\end{proof}

Similar to the proof of part (i), for $U$ such that the partition $\{U, V(H)\backslash U \}$ is pendant-consistent we have $\hh_H(U)=\hh_{H_w}(\widetilde{U})$. Thus by the claim we have \[ \hh_H(U_2) = \hh_{H_w}(\widetilde{U_2})=\hh_{H_w} \] since the minimum of $\hh_{H_w}(B)$ is obtained by all non-empty sets $B\subset V(H_w)$ - see \eqref{eq.allsetsminimisehh}. This completes the proof of part (ii).
\end{proof}

\needspace{5\baselineskip}
\subsection{Constructions related to Corollary~\ref{cor.zeromodimplieshalf}}
The following lemma shows that the converse of Corollary~\ref{cor.zeromodimplieshalf} does not hold.

\needspace{5\baselineskip}
\begin{lemma}\label{lem.windmill} Windmill graphs. For $\ell\geq 2$ construct the windmill graph $W_\ell$ by taking a star with $2\ell$ leaves and adding a perfect matching between the leaf vertices. Then 
\begin{equation}\label{eq.windmill_exp} h_{W_\ell}=1/2 \end{equation}
and
\begin{equation}\label{eq.windmill_mod} \q(W_\ell)>0. \end{equation}
\end{lemma}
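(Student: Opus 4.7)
The plan is to establish both claims by direct computation on a concrete labelling of $W_\ell$. First I would fix notation: $W_\ell$ has a central vertex $c_0$ of degree $2\ell$ together with $\ell$ matched leaf pairs $\{v_{2i-1},v_{2i}\}$, each pair forming a triangle with $c_0$; so $e(W_\ell)=3\ell$ and $\vol(W_\ell)=6\ell$. The upper bound $h_{W_\ell}\leq 1/2$ then comes for free by taking $A=\{v_1,v_2\}$: we get $e(A,\bar A)=2$ (the two edges to $c_0$), $\vol(A)=4$, and $\vol(\bar A)=6\ell-4\geq 8$ since $\ell\geq 2$, so $h_{W_\ell}(A)=2/4=1/2$.

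For the matching lower bound $h_{W_\ell}\geq 1/2$, I would use the symmetry $h_{W_\ell}(A)=h_{W_\ell}(\bar A)$ to assume $c_0\in A$, and then parameterise $A$ by the counts $(a,b,c')$ with $a+b+c'=\ell$, where $a$, $b$, $c'$ are the numbers of matched pairs with both, exactly one, or neither of the leaves in $A$. A straightforward edge count yields $e(A,\bar A)=2(b+c')$, $\vol(A)=2\ell+4a+2b$, and $\vol(\bar A)=4c'+2b$, so the target inequality reduces to $\min\{\vol(A),\vol(\bar A)\}\leq 4(b+c')$. When $\min=\vol(\bar A)$ this says $4c'+2b\leq 4b+4c'$ and is immediate. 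When $\min=\vol(A)$ the hypothesis $\vol(A)\leq\vol(\bar A)$ together with $a+b+c'=\ell$ forces $3a+b\leq c'$, and substituting this into $\vol(A)=2\ell+4a+2b$ gives $\vol(A)\leq 4(b+c')$ by a one-line computation. This completes the proof of \eqref{eq.windmill_exp}.

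For \eqref{eq.windmill_mod}, rather than exhibit an explicit partition I would apply Proposition~\ref{prop.q0}: it suffices to find a single $A\subseteq V(W_\ell)$ violating condition (c), i.e.\ such that $e(A,\bar A)<\vol(A)\vol(\bar A)/\vol(W_\ell)$. Taking $A=\{v_1,v_2\}$ again, the inequality one needs to falsify is $2\geq 4(6\ell-4)/(6\ell)=4-8/(3\ell)$, which rearranges to $\ell\leq 4/3$ and is false for every $\ell\geq 2$. Hence Proposition~\ref{prop.q0} gives $\q(W_\ell)>0$. The only step with any content is the case analysis in the lower bound on $h_{W_\ell}$; everything else reduces to a single well-chosen cut and the characterisation of modularity zero already proved in Proposition~\ref{prop.q0}.
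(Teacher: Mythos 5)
Your proof is correct, and both halves take somewhat different routes from the paper's.

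For the lower bound $h_{W_\ell}\geq 1/2$, the paper first restricts to sets $X$ with $\vol(X)\leq\vol(\bar X)$ and observes it suffices to consider $X$ with $W_\ell[X]$ connected; it then splits on whether the centre lies in $X$, giving only $|X|\in\{1,2\}$ in one branch and a short edge-count $e(X_j,\bar X_j)\geq 2\ell-j$ in the other. You instead use the symmetry $h(A)=h(\bar A)$ to place the centre in $A$, parameterise by the triple $(a,b,c')$ of pair types, and verify $\min\{\vol(A),\vol(\bar A)\}\leq 4(b+c')$ by a clean algebraic case split. Both arguments are elementary and of comparable length; yours avoids the "reduce to connected sets" step but at the cost of a small linear-programming-flavoured computation. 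I checked the algebra in your Case~2: $\vol(A)\leq\vol(\bar A)$ together with $a+b+c'=\ell$ does give $3a+b\leq c'$, and then $\vol(A)=6a+4b+2c'\leq 4b+4c'$ follows from $3a\leq c'$, so this is fine.

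For $\q(W_\ell)>0$ you invoke Proposition~\ref{prop.q0}(c) with the matched-pair set $A=\{v_1,v_2\}$, checking $e(A,\bar A)=2<4-8/(3\ell)=\vol(A)\vol(\bar A)/\vol(W_\ell)$ for $\ell\geq 2$. The paper instead exhibits the corresponding bipartition explicitly and computes $q_\cA(W_\ell)=(6\ell-8)/(9\ell^2)>0$. These are really the same cut viewed two ways; your version is arguably cleaner because it reuses the already-proved characterisation of modularity zero instead of recomputing a modularity score from scratch, but the underlying content is identical. In short: correct proof, genuinely different (though closely related) presentation, no gaps.
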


See Figure~\ref{fig.windmill} for an illustration of $W_8$. 

\begin{figure}
\centering
\includegraphics[scale=0.6]{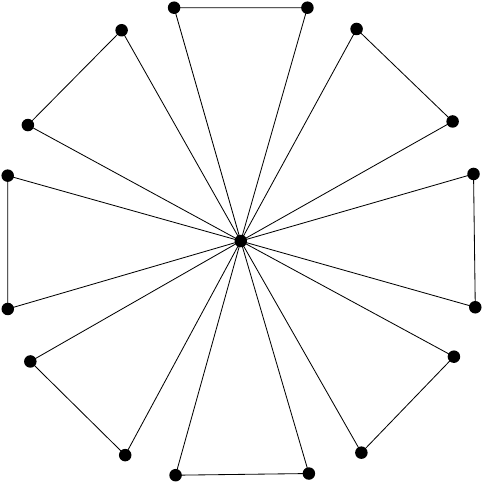}
\caption{The windmill graph $W_8$, see Lemma~\ref{lem.windmill}.}\label{fig.windmill}
\end{figure}
\begin{proof}[Proof of \eqref{eq.windmill_exp}]\label{proof.windmill}
We will show that for each non-empty set $X$ of vertices with $\vol(X)\leq \vol(\bar{X})$ we have \[h_{W_\ell}(X)=e(X,\bar{X})/\vol(X)\geq 1/2\] with equality for some such set $X$. Observe that $\vol(W_\ell)=6\ell$, and so we consider $X$ with $\vol(X)\leq 3\ell$.\\ 

Denote the central vertex by $v$. Notice that to show $h_{W_\ell}(X)\geq 1/2$ for all
$X$ as above it suffices to show that $h_{W_\ell}(X)\geq 1/2$ for $X$ with $W_\ell[X]$ connected. 
Hence if $v\notin X$ there are only two cases to check: if $|X|=1$ then $X$ consists of a single non-central vertex and $h_{W_\ell}(X)=e(X,\bar{X})/\vol(X)=2/2=1$, and if $|X|=2$ then $X$ consists of two non-central vertices connected by an edge and $h_{W_\ell}(X)=e(X,\bar{X})/\vol(X)=2/4=1/2$. Note this second case shows that $h_{W_\ell}\leq 1/2$.\\

Now it remains only to consider vertex subsets containing the central vertex. Let $X_j$ consist of the central vertex $v$ and $j$ non-central vertices. Then $\vol(X_j)=2\ell+2j$ and thus we may assume $j\leq \ell/2$ (so that $\vol(X_j)\leq 3\ell)$. Hence, considering just edges between the central vertex $v$ and vertices in $\bar{X_j}$, we have that $e(X_j,\bar{X_j})\geq 2\ell-j \geq 3\ell/2$. Thus, 
\[
h_{W_\ell}(X_j)=e(X_j,\bar{X}_j)/{ \vol(X_{j})} \geq (3\ell/2)/(3\ell)\geq 1/2,
\]
which completes the proof of~\eqref{eq.windmill_exp}.\end{proof}

\begin{proof}[Proof of \eqref{eq.windmill_mod}.]

Let $A$ be a vertex set consisting of an adjacent pair of non-central vertices and define the bipartition $\cA=\{A, V(W_\ell)\backslash A\}$.
This has edge contribution $q^E_\cA(W_\ell)=1-2/(3\ell)$. To calculate the degree tax, note $\vol(A)=4$, and so $\sum_{A\in\AA} \vol(A)^2 = 4^2 + (6\ell-4)^2$ and thus $q^D_\cA(W_\ell)=1 - (12\ell-8)/(9\ell^2)$. Hence for $\ell\geq 2$, 
\[\q(W_\ell)\geq q_\AA(W_\ell)=\frac{6\ell-8}{9\ell^2}>0\]
which completes the proof of~\eqref{eq.windmill_mod}.
\end{proof}


\needspace{3\baselineskip}
\section{Parameterising by volume}
\label{sec.choice} 
In this paper so far we have considered the modularity of graphs $G$ with subgraphs $H$ of relative size $e(H)/e(G) \geq \alpha$. We also could have considered subgraphs $H$ of relative volume \mbox{$\vol_G(H) / \vol(G) \geq\beta$}; that is, parameterising by the relative volume of the subgraph instead of the relative number of edges. (Here we write $\vol_G(H)$ for $\vol_G(V(H))$.)
Note that if $H$ has relative size $\alpha$ then it has relative volume $\geq \alpha/2$ but the relative volume could be $> 1/2$ (even for small $\alpha >0$ - see the construction that follows).
Let us consider briefly whether there are corresponding versions of Propositions~\ref{prop.exptoqub}~(i) and~\ref{prop.noexptoqlb}~(i) under this parameterisation.\\

The lower bound for $\q(G)$ (in terms of $\beta$ and $h_H$)
analogous to that in Proposition~\ref{prop.noexptoqlb}~(i) (in terms of $\alpha$ and $h_H$) works nicely, see Proposition~\ref{prop.noexptoqlb2} below. However, there is no upper bound for $\q(G)$ corresponding to that in Propositions~\ref{prop.exptoqub}~(i), 
as we may see from the following example. For any $\eps >0$, we  construct a graph $G$ with modularity $\q(G) \geq 1-\eps$ and an induced subgraph $H$ of $G$ with $h_H=1$ which has volume at least half that of $G$. Recall from Proposition~\ref{prop.exptoqub}~(i) that, for all $0< \alpha \leq 1$, for every graph $G$ with subgraph $H$ of relative size $e(H)/e(G) \geq \alpha$, we have
$\q(G) \leq 1 - \alpha \min\{h_H, \alpha\}$.  Let $\eta>0$ be arbitrarily small.
The graphs $G$ which we shall construct show that it is {\bf not} true that for all $0< \beta \leq 1$, for every graph $G$ with an induced subgraph $H$ of relative volume $\vol(H)/\vol(G) \geq \beta$, we have
$\q(G) \leq 1 - \eta \, \beta \min\{h_H, \beta\}$.\\

Given $\eps>0$ we construct such a graph $G$ as follows.  Let $k \geq 2/\eps$, let $H$ be the $k$-edge star, and let~$G$ be formed by attaching $k$ leaves to each leaf of $H$. Equivalently let $G$ be a $k$-ary tree of depth two. Thus $G$ has $k^2 +k+1$ vertices and $k^2+k$ edges. Also $h_H=1$; and $\vol_G(H) =k^2 +2k$, so $\vol_G(H)/\vol(G) > 1/2$.
Let $\cA$ be the $(k+1)$-part partition of $V(G)$ with a part for each leaf of the inner star $H$ along with its adjacent leaves, and a part for the central vertex (on in its own).  Then $q_\cA^E(G)= 1-k/(k^2+k)>1-1/k$.
Also $\cA$ has $k$ parts of the same volume (namely $2k+1$) and one of smaller volume (namely $k$), so $q_\cA^D(G) < 1/k$. Thus $\q(G) \geq q_\cA(G) > 1-2/k \geq 1-\eps$,  as claimed.\\

The following result gives a lower bound on $\q(G)$ corresponding to that in Proposition~\ref{prop.noexptoqlb} (i). The function $f$ is defined in \eqref{eqn.f}.

\needspace{5\baselineskip}
\begin{proposition} \label{prop.noexptoqlb2}
Let  $0< \beta, \delta \leq 1$, let $G$ be a graph, and suppose that each induced subgraph $H$ with relative volume $\vol_G(H)/ \vol(G) > \beta$ has $h_H < \delta$.
Then 
\[\q(G) \geq 1-f(\beta) - 2 \delta\, \lceil \log_2 \tfrac1{\beta} \rceil \,. \]
\end{proposition}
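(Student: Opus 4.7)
The plan is to mirror the proof of Proposition~\ref{prop.noexptoqlb}, replacing the edge-based bookkeeping by volume-based bookkeeping. Write $v_0 = \beta \vol(G)$ and $K = \lceil \log_2(1/\beta) \rceil$.

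First I establish a volume-based analog of Lemma~\ref{lem.new1}. Given a set $W \subseteq V(G)$ with $\vol_G(W) > v_0$, the hypothesis yields $h_{G[W]} < \delta$, so there is a nonempty $U \subsetneq W$ with $e_G(U, W \setminus U) < \delta \min\{\vol_{G[W]}(U), \vol_{G[W]}(W \setminus U)\}$. Crucially, this cut bound is symmetric: both $U$ and $W \setminus U$ satisfy $e_G(\cdot, W \setminus \cdot) < \delta \vol_{G[W]}(\cdot)$, since the cut value is the same on both sides and the side with the larger $\vol_{G[W]}$ only weakens the bound. So I may peel off whichever of $U$ and $W \setminus U$ has the smaller $\vol_G$, guaranteeing $\vol_G(A_j) \leq \vol_G(W)/2$ for the peeled piece. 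Running this peeling from $W = A$ with loop condition $\vol_G(W) > \max\{\vol_G(A)/2, v_0\}$ produces a partition $\cB_A$ of $A$ in which every part has $\vol_G \leq \max\{\vol_G(A)/2, v_0\}$. Summing the cut bounds $\partial_G(\cB_A) = \sum_j e_G(A_j, W_{j-1} \setminus A_j) < \delta \sum_j \vol_{G[W_{j-1}]}(A_j)$, together with a routine edge-counting check that $\sum_j \vol_{G[W_{j-1}]}(A_j) \leq \vol_{G[A]}(A) = 2 e_G(A)$, yields $\partial_G(\cB_A) < \delta \vol_G(A)$.

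Next I iterate. Starting from the trivial partition $\cA^0 = \{V(G)\}$, at each step $j = 1, \ldots, K$ apply the peeling above to every part $A$ of $\cA^{j-1}$ individually to obtain the refinement $\cA^j$; this is valid because the hypothesis that $h_{G[U]} < \delta$ whenever $\vol_G(U) > v_0$ transfers immediately from $V(G)$ to any $U \subseteq A$. Writing $M_j := \max_{A \in \cA^j} \vol_G(A)$, the peeling bound gives $M_j \leq \max\{M_{j-1}/2, v_0\}$, so after $K$ iterations $M_K \leq \max\{2^{-K} \vol(G), v_0\} = \beta \vol(G)$. Meanwhile, the boundary $\partial_G(\cA^j)$ grows by at most $\sum_{A \in \cA^{j-1}} \delta \vol_G(A) = \delta \vol(G) = 2 \delta \, e(G)$ per iteration, and so $\partial_G(\cA^K) < 2 K \delta \, e(G)$.

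For the final modularity estimate, set $x_A = \vol_G(A)/\vol(G)$ for $A \in \cA^K$: then $\sum_A x_A = 1$ and $\max_A x_A \leq \beta$, so by the very definition of $f$ in~\eqref{eqn.f} the degree tax satisfies $q^D_{\cA^K}(G) = \sum_A x_A^2 \leq f(\beta)$; the edge contribution satisfies $q^E_{\cA^K}(G) = 1 - \partial_G(\cA^K)/e(G) > 1 - 2 K \delta$. Combining, $\q(G) \geq q_{\cA^K}(G) > 1 - f(\beta) - 2 \delta \lceil \log_2(1/\beta) \rceil$, as required. The main obstacle is in the first step: ensuring that the peeled piece $A_j$ can be chosen with $\vol_G(A_j) \leq \vol_G(W_{j-1})/2$ rather than merely $\vol_{G[W_{j-1}]}(A_j) \leq \vol_{G[W_{j-1}]}(W_{j-1})/2$, since without this the halving of $M_j$ across iterations could fail (because $\vol_G(U)$ of a subset $U \subseteq W$ can exceed $\vol_{G[W]}(U)$ by the number of edges from $U$ to $V(G)\setminus W$). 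The resolution is the symmetric form of the $h_{G[W]}$-witness described above.
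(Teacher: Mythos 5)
Your proof is correct, but it is organized along a genuinely different route from the paper's. You set the argument up as $K = \lceil\log_2(1/\beta)\rceil$ explicit rounds of global refinement, mirroring Lemmas~\ref{lem.new1} and~\ref{lem.new2} from the proof of Proposition~\ref{prop.noexptoqlb}: in each round you split every currently oversized part, bound the boundary edges added per round by $\delta\vol(G)$, and show the maximum $\vol_G$-size at least halves (or drops below $\beta\vol(G)$). The paper's proof of Proposition~\ref{prop.noexptoqlb2} instead follows Laso{\'n}--Sulkowska: it runs a \emph{single} while-loop that repeatedly splits one oversized component at a time, deleting the cut edges, and charges the cut cost to vertices via weights $w(v)$ increased by $\delta\deg_G(v)$ each time $v$'s part is split on the smaller side; since $\vol_G$ of $v$'s part halves with each charge, $w(v) \leq \delta\lceil\log_2(1/\beta)\rceil\deg_G(v)$, giving the identical global bound $|X|\leq\delta\lceil\log_2(1/\beta)\rceil\vol(G)$ on deleted edges. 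The two analyses produce the same constant and the same final inequality. Your version buys a closer structural parallel with Proposition~\ref{prop.noexptoqlb}, so the two propositions read as two instances of one peel-and-refine template; the paper's charging argument is a little shorter and sidesteps the per-round bookkeeping. The one genuinely delicate point in your version, which you correctly flag and resolve, is that the $h_{G[W]}$-witness controls $\vol_{G[W]}$ while the halving must hold for $\vol_G$; using the symmetry of the cut bound (it holds with either side's $\vol_{G[W]}$ on the right) to peel off the $\vol_G$-smaller side handles this, and the paper makes the analogous move implicitly when it chooses $S\subset A$ with $\vol_G(S)\leq\vol_G(A\setminus S)$.
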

The proof follows the general approach of Laso{\'n} and Sulkowska~\cite{lason2021modularity}. 
\begin{proof}
Let $V=V(G)$.
Given a graph $J$ on $V$ we let $\CC( J)$ denote the partition of $V$ into the vertex sets of the connected components of $J$.
We start with the graph $G$ and $\CC(G)$, 
and repeatedly delete edges to form a subgraph $G'$ with partition $\CC(G')$, until no large components remain.  Each component of each graph $G'$ is
an induced subgraph of $G$.
\smallskip

\hspace{.3in} {\bf while} the current graph $J$ is such that some part $A$ in $\CC(J)$ has $\vol_G(A) > \beta\, \vol(G)$

\hspace{.5in} find $\emptyset \neq S \subset A$ such that $\vol_G(S) \leq \vol_G(A \backslash S)$ and $e_G(S,A \backslash S) < \delta\, \vol_G(S)$, and

\hspace{.5in}
delete the edges in $E_G(S,A \backslash S)$ to form the next current graph. 
\smallskip

Note that in the `while loop' above, the component of $J$ on $A$ is the induced subgraph $G[A]$ and (hence by assumption) is not a $\delta$-expander, so we may choose $S$ such that
\[ e_G(S,A \backslash S) < \delta\, \min\{ \vol_H(S), \vol_H(A \backslash S)\} \leq \delta\, \vol_G(S)\,.\]

Let $\hat{G}$ be the final graph, and let $\cQ$ be the final partition $\CC(\hat{G})$ of $V$. Note that each part $Q$ has $\vol_G(Q) \leq \beta \, \vol(G)$, so $q_{\cQ}^D(G) \leq f(\beta)$.
Let $X$ be the set of all edges deleted in the process, so $X = \bigcup_{Q \in \cQ}\, e_G(Q,V\backslash Q)$ and
$q_{\cQ}^E(G) = 1- |X|/ e(G)$.
We shall upper bound $|X|$.\\

Initially assign weight $w(v)=0$ for each $v \in V(G)$. Each time in the `while loop', when we use $S$ to split $A$, the total number of edges deleted increases by $e_G(S,A \backslash S) <  \delta\, \vol_G(S)$. For each $v \in S$ we increase the weight $w(v)$ by $\delta\, \deg_G(v)$, so $\sum_{v \in V} w(v)$ increases by $\delta\, \vol_G(S)$.
Thus at each stage of the process the number of edges deleted so far is at most $\sum_{v \in V} w(v)$; and so for the final weights $w(v)$ at the end of the process, $|X| \leq \sum_{v \in V} w(v)$.  But when a vertex $v$ in a part $A$ has its weight increased, the new part $A'$ containing $v$ has $\vol_G(A') \leq \frac12 \vol_G(A)$, so this can happen at most $\lceil \log_2 \frac1{\beta} \rceil$ times. Thus $w(v) \leq \lceil \log_2 \frac1{\beta} \rceil\, \delta\, \deg_G(v)$ for each $v \in V$, and so
$|X| \leq  \delta\, \lceil \log_2 \frac1{\beta} \rceil \cdot \vol(G)$.

Finally we have
\[\q(G) \geq q_\cQ(G) =
q_{\cQ}^E(G) - q_{\cQ}^D(G) \geq 1 - |X|/e(G) - f(\beta) \geq 1-f(\beta) - 2 \delta\, \lceil \log_2 \tfrac1{\beta} \rceil \,,\]
as required.\end{proof}


\needspace{5\baselineskip}
\section{Open questions}
\label{sec.concl}
A direct consequence of Proposition~\ref{prop.exptoqub}, taking $H=G$ and $\alpha=1$, is the following. (Recall that~$h_G$ and~$\hh_G$ are the edge expansion parameters defined in Section~\ref{subsec.defns}.)
\begin{corollary}\label{cor.C3}
For any graph $G$ we have \begin{equation}\label{eq.corC3} q^*(G) \leq 1- \max\{ \hh_G, 1\} \leq 1- h_G.\end{equation}
\end{corollary}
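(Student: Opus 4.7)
The plan is to deduce this as a direct specialisation of Proposition~\ref{prop.exptoqub}(i). Taking the induced subgraph $H := G$ itself, the relative size is $\alpha = e(H)/e(G) = 1$, and substituting into the bound $q^*(G) \leq 1 - \alpha \min\{\hh_H, \alpha\}$ from Proposition~\ref{prop.exptoqub}(i) immediately yields $q^*(G) \leq 1 - \min\{\hh_G, 1\}$, which is exactly the first inequality as it is displayed in the paragraph immediately following Proposition~\ref{prop.exptoqub}.

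For the second inequality, all the ingredients have already been recorded earlier in the excerpt. From~(\ref{eqn.hGhhG}) we have $h_G \leq \hh_G$, and from Remark~\ref{rem.hbounds} we have $h_G \leq 1$. Combining these gives $h_G \leq \min\{\hh_G, 1\}$, whence $1 - \min\{\hh_G, 1\} \leq 1 - h_G$.

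There is no real obstacle here: the corollary is explicitly flagged in the text as a direct consequence of Proposition~\ref{prop.exptoqub}, and the proof consists of a single substitution followed by the standard comparison between $h_G$ and $\hh_G$ already established in the preliminaries. The only point worth a sentence is why truncating $\hh_G$ at $1$ on the right-hand side is harmless for bounding $1 - h_G$, and this is precisely the content of $h_G \leq 1$ from Remark~\ref{rem.hbounds}.
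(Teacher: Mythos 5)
Your proof is correct and matches the paper's own derivation: the paper also obtains Corollary~\ref{cor.C3} by taking $H=G$ and $\alpha=1$ in Proposition~\ref{prop.exptoqub}(i), then uses $h_G \leq \hh_G$ from~(\ref{eqn.hGhhG}) together with $h_G \leq 1$ from Remark~\ref{rem.hbounds}. One small observation worth flagging: as printed, the corollary reads $\max\{\hh_G,1\}$, which would make the middle expression nonpositive (false already for $K_2$, where $\hh_G=2$ and $\q=0$); it is clearly a typo for $\min\{\hh_G,1\}$, which is what the earlier display after Proposition~\ref{prop.exptoqub} gives and what your substitution correctly produces.
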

In Proposition~\ref{prop.exptoqub}(ii) we showed that the first inequality in~\eqref{eq.corC3} is tight : for all $\eps>0$ and $0\leq \hdelta \leq 1$, we can find a graph~$G$ with $|\hh_G-\hdelta|<\eps$ and with $q^*(G)\geq 1-\hh_G-\eps$. The construction is a clique with pendant leaves as depicted in Figure~\ref{fig.completeloopANDhairy}(b).\\

However, it is not clear whether the second bound in~\eqref{eq.corC3} is tight for all values of edge expansion. The open question is to find the right upper bound for modularity in terms of $h_G$. 

\begin{open}
What is the optimal function~$f$ for which is it true that for any graph~$G$ we have $\q(G)\leq 1-f(h_G)$?  
\end{open}

We have a family of examples such that $\q(G) \geq 1-2h_G - \eps$. Hence by these examples and Corollary~\ref{cor.C3}, $x \leq f(x) \leq 2x$.\\

For $\delta=1$ we may take $G$ a single edge, then $h_G=1$ and $\q(G)=0$ and thus the bound $\q(G)\leq1-h_G$ in~\eqref{eq.corC3} is tight for $G=K_2$. Similarly for all $\eps>0$ if we take $G$ a sufficiently long path then $h_G<\eps$ and $\q(G)>1-\eps$ which gives an example where the bound in \eqref{eq.corC3} is tight for~$\delta=0$. For $0 < \delta < 1$ we have no such examples, and the question is open. \\

If Corollary~\ref{cor.C3} is tight, this means that a very regular expander exists, in which all big subsets of vertices have roughly the same edge expansion. Otherwise, it would imply that there is a structural reason why, in any graph, big 
subsets of vertices cannot all have the same edge expansion.

\section*{Acknowledgements}
We are grateful to Prasad Tetali for helpful discussions.

\bibliographystyle{plain}
\bibliography{bibli}
\appendix

\end{document}